\documentclass[11pt]{article}
\usepackage{fullpage, amssymb, amsthm, amsmath}
\usepackage{cancel}
\usepackage{hyperref}
\usepackage{caption}
\captionsetup{width=.85\textwidth, font=footnotesize}

\newtheorem{thm}{Theorem}
\newtheorem{lem}[thm]{Lemma}
\newtheorem{cor}[thm]{Corollary}

\newtheorem{prop}[thm]{Proposition}

\newtheorem*{mainthm}{Main Theorem}
\newtheorem*{equiv-lem}{Equivalence Lemma}
\theoremstyle{definition}
\newtheorem{example}[thm]{Example}
\newtheorem{defn}[thm]{Definition}
\def\mad{\textrm{mad}}
\def\ch{\textrm{ch}}
\newcommand\vph{\varphi}
\newcommand\bs{\boldsymbol}
\newcommand\flr[1]{\lfloor#1\rfloor}
\newcommand\ceil[1]{\lceil#1\rceil}
\def\aftermath{\par\vspace{-\belowdisplayskip}\vspace{-\parskip}\vspace{-\baselineskip}}

\RequirePackage{marginnote,hyperref}
\addtolength{\marginparwidth}{5pt}
\addtolength{\marginparsep}{-15pt}
\newcommand{\aside}[1]{\marginnote{\scriptsize{#1}}[0cm]}
\newcommand{\aaside}[2]{\marginnote{\scriptsize{#1}}[#2]}
\newcommand\Emph[1]{\emph{#1}\aside{#1}}
\newcommand\EmphE[2]{\emph{#1}\aaside{#1}{#2}}

\usepackage{tikz}
\usetikzlibrary{decorations.markings,arrows,snakes,calc}
\usetikzlibrary{positioning}

\tikzstyle{uStyle}=[shape = circle, minimum size = 4pt, inner sep = 1pt,
outer sep = 0pt, fill=white, semithick, draw]
\tikzstyle{u04Style}=[shape = circle, minimum size = 4pt, inner sep = 1pt,
outer sep = 0pt, fill=white, semithick, draw,
label={[yshift=.05cm]above:\footnotesize{$U_0^4$}}]
\tikzstyle{u03Style}=[shape = circle, minimum size = 4pt, inner sep = 1pt,
outer sep = 0pt, fill=white, semithick, draw,
label={[yshift=.05cm]above:\footnotesize{$U_0^3$}}]
\tikzstyle{u03lStyle}=[shape = circle, minimum size = 4pt, inner sep = 1pt,
outer sep = 0pt, fill=white, semithick, draw,
label={[yshift=.05cm,xshift=-.04cm]above:\footnotesize{$U_0^3$}}]
\tikzstyle{u03rStyle}=[shape = circle, minimum size = 4pt, inner sep = 1pt,
outer sep = 0pt, fill=white, semithick, draw,
label={[yshift=.05cm,xshift=.04cm]above:\footnotesize{$U_0^3$}}]
\tikzstyle{u02Style}=[shape = circle, minimum size = 4pt, inner sep = 1pt,
outer sep = 0pt, fill=white, semithick, draw,
label={[yshift=.05cm]above:\footnotesize{$U_{\ell}^2$}}]
\tikzstyle{f02Style}=[shape = circle, minimum size = 4pt, inner sep = 1pt,
outer sep = 0pt, fill=white, semithick, draw,
label={[yshift=.05cm]above:\footnotesize{$F_0^2$}}]
\tikzstyle{f12Style}=[shape = circle, minimum size = 4pt, inner sep = 1pt,
outer sep = 0pt, fill=white, semithick, draw,
label={[yshift=.05cm]above:\footnotesize{$F_1^2$}}]
\tikzstyle{u13Style}=[shape = circle, minimum size = 4pt, inner sep = 1pt,
outer sep = 0pt, fill=white, semithick, draw,
label={[yshift=.05cm]above:\footnotesize{$U_1^3$}}]
\tikzstyle{u12Style}=[shape = circle, minimum size = 4pt, inner sep = 1pt,
outer sep = 0pt, fill=white, semithick, draw,
label={[yshift=.05cm]above:\footnotesize{$U_{\ell+1}^2$}}]
\tikzstyle{u22Style}=[shape = circle, minimum size = 4pt, inner sep = 1pt,
outer sep = 0pt, fill=white, semithick, draw,
label={[yshift=.05cm]above:\footnotesize{$U_{\ell+2}^2$}}]
\tikzstyle{u03LStyle}=[shape = circle, minimum size = 4pt, inner sep = 1pt,
outer sep = 0pt, fill=white, semithick, draw,
label={[yshift=-.05cm]below:\footnotesize{$U_0^3$}}]
\tikzstyle{u03RStyle}=[shape = circle, minimum size = 4pt, inner sep = 1pt,
outer sep = 0pt, fill=white, semithick, draw,
label={[yshift=0cm]left:\footnotesize{$U_0^3$}}]
\tikzstyle{u03RbStyle}=[shape = circle, minimum size = 4pt, inner sep = 1pt,
outer sep = 0pt, fill=white, semithick, draw,
label={[xshift=0.925cm]left:\footnotesize{$U_0^3$}}]
\tikzstyle{u12RStyle}=[shape = circle, minimum size = 4pt, inner sep = 1pt,
outer sep = 0pt, fill=white, semithick, draw,
label={[xshift=1.2cm]left:\footnotesize{$U_{\ell+1}^2$}}]

\tikzstyle{uBstyle}=[shape = circle, minimum size = 9pt, inner sep = 1pt,
outer sep = 0pt, fill=white, semithick, draw]
\tikzstyle{lStyle}=[shape = circle, minimum size = 5pt, inner sep =
0.5pt, outer sep = 0pt, draw=none]
\tikzstyle{vlStyle}=[shape = circle, minimum size = 4pt, inner sep =
1.0pt, outer sep = 0pt, draw, fill=white, semithick, font=\footnotesize]

\tikzstyle{uStyle}=[shape = circle, minimum size = 4pt, inner sep = 1pt,
outer sep = 0pt, fill=white, semithick, draw]
\tikzstyle{IStyle}=[shape = circle, minimum size = 5pt, inner sep = 1pt,
outer sep = 1.5pt, fill=black, semithick, draw]

\title{Vertex Partitions into an Independent Set\\ and a Forest with Each
Component Small}
\author{Daniel W. Cranston\thanks{%
Department of Mathematics and Applied Mathematics, Virginia Commonwealth
University, Richmond, VA, USA;
\texttt{dcranston@vcu.edu}
} 
\and Matthew P. Yancey\thanks{%
Institute for Defense Analyses - Center for Computing Sciences, Bowie, MD, USA; \texttt{mpyancey1@gmail.com}
}
}

\begin{document}
\maketitle
\abstract{For each integer $k\ge 2$, we determine a sharp bound on $\mad(G)$
such that $V(G)$ can be partitioned into sets $I$ and $F_k$, where $I$ is an
independent set and $G[F_k]$ is a forest in which each component has at most $k$
vertices.  For each $k$ we construct an infinite family of examples showing our
result is best possible.  
Our results imply that every planar graph $G$ of girth at least 9 (resp. 8, 7)
has a partition of $V(G)$ into an independent set $I$ and a set $F$ such that
$G[F]$ is a forest with each component of order at most 3 (resp. 4, 6). 

Hendrey, Norin, and Wood asked for the largest function $g(a,b)$
such that if $\mad(G)<g(a,b)$ then $V(G)$ has a partition into sets $A$ and $B$
such that $\mad(G[A])<a$ and $\mad(G[B])<b$.  They specifically asked for the
value of $g(1,b)$, i.e., the case when $A$ is an independent set.
Previously, the only values known were $g(1,4/3)$ and $g(1,2)$.  We find
$g(1,b)$ whenever $4/3< b<2$.
}
\bigskip

\section{Introduction}
An \Emph{$(I,F_k)$-coloring} for a graph $G$ is a
partition of $V(G)$ into sets $I$ and $F$ such that $I$ is an independent set
and $F$ induces a forest in which each component has at most $k$ vertices.
The \emph{average degree of $G$} is $2|E(G)|/|V(G)|$.
The \emph{maximum average degree of $G$}, denoted \Emph{$\mad(G)$}, is the maximum, taken over
all subgraphs $H$, of the average degree of $H$.
In this paper, we prove a sufficient condition for a graph $G$ to have an
$(I,F_k)$-coloring, in terms of $\mad(G)$.

\begin{thm}
\label{mad-thm}
For each integer $k\ge 2$, let 
$$
f(k):=\left\{\begin{array}{ll}\aside{$f(k)$}
3-\frac3{3k-1} & \mbox{$k$ even} \\
3-\frac3{3k-2} & \mbox{$k$ odd} \\
\end{array}
\right.
$$
If $\mad(G)\le f(k)$, then $G$ has an $(I,F_k)$-coloring.
\end{thm}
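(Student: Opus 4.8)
The plan is to argue by contradiction using the discharging method. Suppose the theorem fails, and among all counterexamples let $G$ minimize $|V(G)|+|E(G)|$. A few reductions come first: $G$ is connected; $G$ has no vertex $v$ of degree at most $1$, since one could take an $(I,F_k)$-coloring of $G-v$ by minimality and then return $v$ to $I$ or as an isolated component of $F$; and $G$ is not a cycle, because every cycle has an $(I,F_k)$-coloring for $k\ge2$ (place a vertex into $I$ once every $k+1$ steps around the cycle, shortening one gap to absorb the remainder). Since $\mad$ is monotone under taking subgraphs, every proper subgraph of $G$ also has $\mad\le f(k)<3$ and hence, by minimality, has an $(I,F_k)$-coloring; and because $\mad(G)<3$, the graph $G$ has minimum degree exactly $2$ and contains a vertex of degree at least $3$ (a \emph{branch vertex}).

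Next I would develop the reducible configurations, all handled in the same spirit: delete a small vertex set $S$, take an $(I,F_k)$-coloring of $G-S$, modify it locally if necessary, and extend it to $S$. The key objects are \emph{threads}, i.e.\ maximal paths whose internal vertices have degree $2$. Along a thread between branch vertices $x$ and $y$ one is free to recolor subject only to (i) no two consecutive vertices landing in $I$, compatibly with the colors of $x$ and $y$, and (ii) each maximal run of $F$-vertices on the thread, merged with whatever $F$-component it meets at $x$ or at $y$, having total size at most $k$. A short case analysis (using $k\ge2$) shows that any thread with at least three internal vertices can be recolored regardless of the coloring carried by $G-S$, so such threads are reducible; the remaining, short threads are where the difficulty concentrates, since there extendability hinges on the $F$-components at $x$ and $y$ not already being full. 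Controlling that — effectively, always keeping a bit of slack in boundary $F$-components — is what forces one to prove a strengthened statement by induction, and with it one rules out the further local excesses that the discharging cannot absorb: a low-degree branch vertex incident to too many short threads, two nearby branch vertices joined by several short threads, and similar configurations.

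For the discharging, assign to each vertex $v$ the charge $d(v)-f(k)$, so that the total charge is $2|E(G)|-f(k)|V(G)|\le 0$. The rules move charge from branch vertices to the degree-$2$ vertices on their incident threads, a thread's cost being split between its two ends, with the amounts chosen so that $f(k)$ is exactly the break-even rate: every degree-$2$ vertex finishes with charge $0$, and the reducibility results guarantee that every branch vertex finishes with charge at least $0$. Hence the total final charge is at least $0$; combined with $2|E(G)|-f(k)|V(G)|\le 0$ this forces it to equal $0$, which pins $G$ down to a rigid tight structure that one then $(I,F_k)$-colors by hand — the final contradiction. (The parity of $k$ governs precisely which short-thread and branch-vertex configurations survive all the reductions, hence the exact break-even value, which is why the bound splits between $3k-1$ and $3k-2$.)

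The main obstacle I expect is assembling the complete and correct list of reducible configurations so that the discharging closes with the \emph{sharp} constant for every $k$. This has two delicate sides: getting the strengthened inductive statement right, so that enough slack in boundary $F$-components is always available to push colorings across short threads; and disposing of the tight configurations — the ones on which the extremal families are built, where the discharging leaves every vertex at charge exactly $0$ — either by showing they cannot occur in a minimal counterexample or by coloring them explicitly. Threading the parity-dependent bookkeeping through both of these, uniformly in $k$, is where the real work lies.
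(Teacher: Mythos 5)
Your sketch correctly locates the difficulty --- threads with one or two internal vertices cannot be recolored if the $F$-components at their endpoints are already full, so a plain ``color $G-S$ by minimality and extend'' induction does not close --- but it then defers exactly that difficulty to an unspecified ``strengthened statement by induction.'' That strengthened statement is not a technical afterthought; it is the entire content of the proof. The paper's solution is to generalize to precolored graphs (classes $U_j$ and $F_j$ recording fake $F$-neighbors already committed at a vertex), to weight vertices and edges by coefficients $C_{U,j},C_{F,j},C_I,C_E$ calibrated on the extremal family, and to prove Weak and Strong Gap Lemmas giving lower bounds on the potential $\rho^k(R)$ of every proper vertex subset of a minimal counterexample. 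Those gap lemmas are what certify that the \emph{modified} graph $G'$ --- obtained from $G-S$ by adding fake $F$-neighbors at the boundary, i.e.\ by imposing the extra slack you need --- contains no critical subgraph and hence is colorable. Note that the hypothesis $\mad(G')\le f(k)$ is of no use here, because the modification adds constraints without adding edges; some quantitative surrogate (the potential) must be carried through the induction, and your proposal contains no candidate for it.

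Two further points where the sketch would not survive contact with the details. First, with initial charge $d(v)-f(k)$, a $3$-vertex holds only $3/(3k-1)$ (or $3/(3k-2)$) units of surplus while each $2$-vertex is short by nearly a full unit, so ``every degree-$2$ vertex finishes with charge $0$ and every branch vertex with charge $\ge 0$'' cannot be achieved by local rules alone: it requires first eliminating, as reducible, essentially every configuration in which a branch vertex of small degree sees a $2$-vertex of the wrong type, and the proofs of those reductions again lean on the gap lemmas (compare Lemmas~\ref{getting rid of very small charge}, \ref{no2-threads-k-even}, and \ref{noadj3swith32nbrs-lem}, whose verifications are potential computations, not bare colorings). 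Second, the endgame is not ``total charge exactly $0$ pins down a rigid structure'': the paper must enumerate and explicitly color all surviving components of $G$ minus the problematic $2$-vertices (some thirty cases when $k$ is odd), and the parity split between $3k-1$ and $3k-2$ emerges from that bookkeeping together with the coefficient choices, not merely from which short threads ``survive.'' As it stands your proposal is a reasonable road map but omits the load-bearing lemma, so it is not yet a proof.
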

Theorem~\ref{mad-thm} is best possible.  For each positive integer $k$ there
exists an infinite family of graphs with maximum average degree approaching
$f(k)$ (from above) such that none of these graphs has an $(I,F_k)$-coloring.
Note that $f(3)=\frac{18}7$, $f(4)=\frac{30}{11}$, and $f(6)=\frac{48}{17}$.  
Each planar graph $G$ with girth $g$ has $\mad(G)<\frac{2g}{g-2}$.
So Theorem~\ref{mad-thm} implies that every planar graph $G$ of girth at
least 9 (resp. 8, 7) has a partition of $V(G)$ into an independent set $I$ and
a set $F$ where $G[F]$ is a forest with each tree of order at most 3
(resp. 4, 6); for girth 9, this is best possible, since~\cite[Corollary 4]{EMOP}
constructs girth 9 planar graphs with no $(I,F_2)$-coloring.
This strengthens results in~\cite{DMP,NS-mfmc}.
Theorem~\ref{mad-thm} is implied by a more general result below, 
our Main Theorem.  Before introducing definitions and notation to state it, we
briefly discuss related work.

Choi, Dross, and Ochem \cite{CDO} studied a variant of $(I,F_k)$-colorings where
they did not require the components of $G[F_k]$ to be acyclic, but only to have
order at most $k$.  They proved that $G$ has such a coloring whenever
$\mad(G) < \frac{8}{3}(1-\frac{1}{3k+1})$.  
Theorem~\ref{mad-thm} allows a weaker hypothesis (and a stronger conclusion).
Moreover, the argument on the
sharpness of Theorem~\ref{mad-thm} (see Lemma \ref{sharp-critical-lem})
does not require the acyclic nature of $F_k$, and therefore
Theorem~\ref{mad-thm} is also a sharp result for this variant of the problem.
%
Dross, Montassier, and Pinlou~\cite{DMP} studied a
different variant of $(I,F_k)$-colorings, where $G[F_k]$ has bounded maximum
degree, but perhaps not bounded order (earlier related results are
in~\cite{BK-defective} and~\cite{BIMOR}).  
Under hypotheses very similar to those in Theorem~\ref{mad-thm}, they proved
that $G$ has such a coloring.
These results, too, are strengthened by Theorem~\ref{mad-thm}.

We can also view Theorem~\ref{mad-thm} in a more general context.
Hendrey, Norin, and Wood~\cite[Problem \#14]{HNW} asked for the largest function
\Emph{$g(a,b)$}
such that if $\mad(G)<g(a,b)$ then $V(G)$ has a partition into sets $A$ and $B$
such that $\mad(G[A])<a$ and $\mad(G[B])<b$.  They specifically asked for the
value of $g(1,b)$, which corresponds to the case that $A$ is an independent set.
Nadara and Smulewicz~\cite{NS-mfmc} used maximum flows to give a short
proof that $g(1,b)\ge b+1$ and $g(2,b)\ge b+2$.
However, the only exact values previously known\footnote{Borodin, Kostochka, and
Yancey~\cite{BKY-improper} also showed that $g(4/3,4/3)=14/5$.} were $g(1,4/3)$
and $g(1,2)$ (see~\cite{BK-improper} for $g(1,4/3)$ and see below for $g(1,2)$).
We find the value of $g(1,b)$ whenever $4/3\le b<2$.

We also study a related function \Emph{$\tilde{g}(a,b)$}.  This is
the largest value for which there is a finite set $\mathcal{G}_{a,b}$
of graphs such that if $\mad(G)<\tilde{g}(a,b)$ and $G$ has no graph in
$\mathcal{G}_{a,b}$ as a subgraph, then $V(G)$ has a partition into
sets $A$ and $B$ where $\mad(G[A])<a$ and $\mad(G[B])<b$.  That is,
$\tilde{g}(a,b)$ is the minimum value such that there is an infinite family of graphs
$G_j$ with $\mad(G_j)$ approaching $\tilde{g}(a,b)$ from above
(as $j\to \infty$) and each $V(G_j)$ has no partition $A,B$ with
$\mad(G_j[A])<a$ and $\mad(G_j[B])<b$.  Clearly, $g(a,b)\le \tilde{g}(a,b)$, and
sometimes this inequality is strict.  

In~\cite{CY-IF} we observed that $g(1,2)=3$.  The lower bound follows from
degeneracy.\footnote{Given a vertex $v$ of degree
at most 2, by induction we partition $G-v$ into sets $I$ and $F$ such that $I$
is independent and $G[F]$ is a forest.  If $v$ has no neighbor in $I$, then we
add $v$ to $I$.  Otherwise, we add it to $F$.}  The upper bound $g(1,2)\le 3$
comes from $K_4$.  However, $K_4$ is the single obstruction to strengthening
this bound.  In fact, we proved that $\tilde{g}(1,2)=3.2$.
Each component of a graph $G$ with $\mad(G)<2$ is a forest.
Thus, a partition of $V(G)$ into sets $I$ and $F$ with $\mad(G[I])<1$ and
$\mad(G[F])<2-2/(k+1)$ is precisely an $(I,F_k)$-partition.
In the present paper, we show that $g(1,2-2/(k+1))=\tilde{g}(1,2-2/(k+1))=f(k)$
for every integer $k\ge 2$ (here $f(k)$ is as defined in
Theorem~\ref{mad-thm}).  This is particularly interesting because
$\tilde{g}(1,2)=3.2$, but $\tilde{g}(1,b)<3$ for every $b<2$.

A \Emph{precoloring} of $G$ is a partition of $V(G)$ into sets $U_0, U_1, \ldots,
U_{k-1}, F_1, F_2, \ldots, F_k$\aaside{$U_0,\ldots,U_{k-1}$}{4mm}%
\aaside{$F_1,\ldots,F_k, I$}{8mm}, and $I$.  Intuitively, we think of a vertex in $F_j$ as
being already colored $F$ and having an additional $j-1$ (fake) neighbors that
are also already colored $F$.  So, for example, if a vertex is in $F_k$ then we
cannot color any of its neighbors in $\bigcup_{j=0}^{k-1}U_j$ with $F$, since
this would create a component colored $F$ with at least $k+1$ vertices. 
Similarly, a vertex $v$ in $U_j$ is uncolored, but has $j$ fake neighbors that
are colored $F$. 
So coloring $v$ with $F$ would create a component colored $F$ with $j+1$ vertices.
An $(I,F_k)$-coloring of a precolored graph $G$
is an $(I,F_k)$-coloring $(I',F')$ of the underlying (not precolored) graph $G$
such that $I\subseteq I'$, $\cup_{j=1}^k F_j\subseteq F'$ and each component of
$G[F']$ has at most $k$ vertices \emph{including} any fake neighbors arising from
the precoloring.  A graph $G$ is \emph{precolored trivially} if $U_0=V(G)$, so
$U_1=\cdots=U_{k-1}=F_1=\cdots=F_k=I=\emptyset$.

A precolored graph $G$ is \Emph{$(I,F_k)$-critical} if $G$ has no
$(I,F_k)$-coloring, but every proper subgraph of $G$ does and, furthermore, for
any vertex precolored $U_j$ or $F_j$, reducing $j$ by 1 allows an
$(I,F_k)$-coloring of $G$.  So Theorem~\ref{mad-thm} is equivalent to saying
that every (trivially precolored) $(I,F_k)$-critical graph $G$ has $\mad(G)>f(k)$.
To facilitate a proof by induction, we want to extend Theorem~\ref{mad-thm} to
allow other precolorings.  However, a vertex in $U_j$ (with $j>0$) or in $F_j$ imposes
more constraints on an $(I,F_k)$-coloring than one in $U_0$.  Intuitively, a
vertex in $V(G)\setminus U_0$ should ``count more'' toward the average degree
than one in $U_0$.  This
motivates weighting vertices differently, as we do below.  (In
Section~\ref{gadgets-sec}, we explain our choice of weights.)

\begin{defn}
\label{rho-defn}
\label{coeff-defn}
For each integer $k\ge 2$, let
\begin{itemize}
        \item $C_E:=\{ 3k-1\mbox{ for $k$ even, } 3k-2\mbox{ for $k$
odd}\}$;\aside{$C_E$}
	\item $C_{U,0}:=\frac{3C_E-3}2$;
	\item $C_{U,j} := C_{U,0} - 3j = \frac{3C_E-3}2-3j$ for $0 < j \leq k$;\aside{$C_{U,j}$}
	\item $C_{F,j} := C_{U,j-1} + C_{I} - C_E = C_E - 3j$ for $1 \leq j \leq
\lfloor \frac{k+1}2 \rfloor$;\aside{$C_{F,j}$}
	\item $C_{F,j} := C_{U,\flr{\frac{k-1}2}} + C_{U,\ceil{\frac{k-1}2}} +
C_{U,j-\lfloor \frac{k+3}2 \rfloor} - 3C_E  = 3(k-j)$ for $\lfloor \frac{k+3}2
\rfloor \leq j\le k$; and 
	\item $C_I := C_{U,0} + C_{F,k} - C_E = \frac{C_E - 3}2$.\aside{$C_I$}
\end{itemize}
\end{defn}

\begin{mainthm}
Fix an integer $k\ge 2$.  Let 
$$
\rho_G^k(R):=\sum_{j=0}^{k-1}C_{U,j}|U_j\cap R|+\sum_{j=1}^kC_{F,j}|F_j\cap R|+C_I|I\cap R|-C_E|E(G[R])|,
\aside{$\rho^k_G$}$$
\nopagebreak
for each $R\subseteq V(G)$. If a precolored graph $G$ is $(I,F_k)$-critical, then $\rho_G^k(V(G))\le -3$.
\end{mainthm}
Now is a good time to define more terminology and notation.
We typically write $\rho^k$,
rather than $\rho^k_G$, when there is no danger of confusion.
We also write \Emph{coloring} to mean $(I,F_k)$-coloring.
An \Emph{$F$-component} is a component of $G[F]$ (either for an
$(I,F_k)$-coloring of a graph $G$ or for a precoloring of $G$, where
$F=\cup_{j=1}^kF_j$).
We will often want to move a vertex $v$ from $U_a$ to $U_{a+b}$ or from $F_a$ to
$F_{a+b}$, for some integers $a$ and $b$.  Informally, we call this
``adding $b$ $F$-neighbors to $v$''.  If an uncolored vertex $v$ ever has $k$
or more $F$-neighbors, then we recolor $v$ with $I$ (since coloring $v$ with
$F$ would create an $F$-component with at least $k+1$ vertices, which is forbidden);
see Lemma~\ref{noIFk-lem} and the comment after it.
Note the following easy proposition.

\begin{prop}
The Main Theorem implies Theorem~\ref{mad-thm}.
\end{prop}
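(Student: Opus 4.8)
The plan is to argue by contradiction via a minimal counterexample, and then simply unwind the definitions in the Main Theorem for the trivial precoloring. Suppose $\mad(G)\le f(k)$ but $G$ has no $(I,F_k)$-coloring. Among all subgraphs of $G$ that have no $(I,F_k)$-coloring (each carrying the trivial precoloring), I would choose one, call it $H$, minimizing $|V(H)|$ and, subject to that, $|E(H)|$. The first step is to check that $H$ is $(I,F_k)$-critical: by minimality every proper subgraph of $H$ has a coloring, and since $H$ is precolored trivially there are no vertices in any $U_j$ with $j\ge 1$ or in any $F_j$, so the clause about ``reducing $j$ by $1$'' is vacuously satisfied.

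Next I would apply the Main Theorem to $H$, obtaining $\rho^k_H(V(H))\le -3$. Because $H$ is trivially precolored we have $U_0=V(H)$ and $U_1=\cdots=U_{k-1}=F_1=\cdots=F_k=I=\emptyset$, so in the definition of $\rho^k_H$ only the $U_0$ and edge terms survive:
$$
\rho^k_H(V(H)) = C_{U,0}\,|V(H)| - C_E\,|E(H)|.
$$
Hence $C_{U,0}\,|V(H)| - C_E\,|E(H)|\le -3 < 0$, which rearranges to $2|E(H)|/|V(H)| > 2C_{U,0}/C_E$, i.e.\ the average degree of $H$ exceeds $2C_{U,0}/C_E$.

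Finally I would substitute the values from Definition~\ref{rho-defn}. Since $C_{U,0}=\tfrac{3C_E-3}{2}$, the average degree of $H$ is greater than $\tfrac{3C_E-3}{C_E}=3-\tfrac{3}{C_E}$; and plugging in $C_E=3k-1$ for $k$ even and $C_E=3k-2$ for $k$ odd shows this bound is exactly $f(k)$. Therefore $\mad(G)\ge (\text{average degree of }H) > f(k)$, contradicting $\mad(G)\le f(k)$; so $G$ must have an $(I,F_k)$-coloring.

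There is no real obstacle here: the only point requiring any care is the (routine) verification that the minimal counterexample $H$ genuinely satisfies the definition of $(I,F_k)$-critical, and everything after that is a direct substitution. Essentially all the content lives in the Main Theorem, which we are permitted to assume.
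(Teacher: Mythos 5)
Your proposal is correct and follows essentially the same route as the paper: both arguments reduce to the observation that $2C_{U,0}/C_E=f(k)$, so a trivially precolored critical subgraph (which must exist if $G$ is uncolorable) has average degree exceeding $f(k)$ by the Main Theorem. The paper phrases this contrapositively ($\mad(G)\le f(k)$ forces $\rho(R)\ge 0$ for all $R$, so no critical subgraph exists), while you extract a minimal uncolorable subgraph explicitly, but the content is identical.
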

\begin{proof}
Observe that $\frac{2C_{U,0}}{C_E}=f(k)$, as defined in Theorem~\ref{mad-thm}.  
Thus, if $G$ is precolored trivially, then the condition $\rho^k(V(G))\ge
0$ is equivalent to $\frac{2|E(G)|}{|V(G)|}\le f(k)$.  By the Main Theorem,
each $(I,F_k)$-critical graph $G$ has $\rho^k(V(G))\le -3$.  Thus, if
$\mad(G)\le f(k)$, then $\rho(R)\ge 0$ for all $R\subseteq V(G)$; so $G$
contains no $(I,F_k)$-critical subgraph.  Hence, $G$ has an $(I,F_k)$-coloring.
\end{proof}

The proof of the Main Theorem differs somewhat depending on whether $k$ is
even or odd.  However, the two cases are similar.  Thus, we begin the
proof (for all $k$) in Section~\ref{proof-start-sec}.  In
Section~\ref{k-even-sec} we conclude it
for $k$ even, and in Section~\ref{k-odd-sec} we conclude it for $k$ odd.
Before proving the Main Theorem, we discuss the sharpness examples and the gadgets
that motivate our weights in Definition~\ref{rho-defn}.  We then conclude the
introduction with a brief overview of the potential method.  

\subsection{Sharpness Examples}
\label{sharpness-sec}

\begin{figure}[!b]
\centering
\begin{tikzpicture}[thick, xscale=.65, yscale=.9]
\tikzset{every node/.style=uStyle}
\tikzset{2-threadAa/.style={postaction=decorate, decoration={markings,
    mark=at position 0.23 with {\node[uStyle] () {};},
    mark=at position 0.78 with {\node[uStyle] () {};} }}}
\tikzset{2-threadAb/.style={postaction=decorate, decoration={markings,
    mark=at position 0.23 with {\node[uStyle] () {};},
    mark=at position 0.77 with {\node[uStyle] () {};} }}}
\tikzset{2-threadB/.style={postaction=decorate, decoration={markings,
    mark=at position 0.27 with {\node[uStyle] () {};},
    mark=at position 0.91 with {\node[uStyle] () {};} }}}
\tikzset{2-threadCa/.style={postaction=decorate, decoration={markings,
    mark=at position 0.33 with {\node[fill=white, draw=none, minimum size=8pt] () {};},
    mark=at position 0.33 with {\node[IStyle] () {};},
    mark=at position 0.68 with {\node[uStyle] () {};} }}}
\tikzset{2-threadCb/.style={postaction=decorate, decoration={markings,
    mark=at position 0.33 with {\node[uStyle] () {};},
    mark=at position 0.61 with {\node[fill=white, draw=none, minimum size=8pt] () {};},
    mark=at position 0.61 with {\node[IStyle] () {};} }}}
\tikzset{2-threadCc/.style={postaction=decorate, decoration={markings,
    mark=at position 0.33 with {\node[uStyle] () {};},
    mark=at position 0.645 with {\node[uStyle] () {};} }}}
\tikzset{2-threadCd/.style={postaction=decorate, decoration={markings,
    mark=at position 0.345 with {\node[fill=white, draw=none, minimum size=8pt] () {};},
    mark=at position 0.345 with {\node[IStyle] () {};},
    mark=at position 0.70 with {\node[uStyle] () {};} }}}
\tikzset{2-threadDa/.style={postaction=decorate, decoration={markings,
    mark=at position 0.42 with {\node[fill=white, draw=none, minimum size=8pt] () {};},
    mark=at position 0.42 with {\node[IStyle] () {};},
    mark=at position 0.85 with {\node[uStyle] () {};} }}}
\tikzset{2-threadDb/.style={postaction=decorate, decoration={markings,
    mark=at position 0.41 with {\node[uStyle] () {};},
    mark=at position 0.78 with {\node[fill=white, draw=none, minimum size=8pt] () {};},
    mark=at position 0.78 with {\node[IStyle] () {};} }}}

\def\radB{1cm}
\def\radS{.65cm}
\def\mythick{.7mm}


\begin{scope}[xshift=.945cm, xscale=.9675]
\draw (300:\radB) node (v0) {} -- (215:.25*\radB) node (w0) {} -- (60:\radB) node (x0) {}
-- (v0);
\draw[line width=\mythick] (x0) -- ++ (30:\radS) node {} -- ++ (150:\radS) node {} -- (x0);
\draw[line width=\mythick] (w0) -- ++ (150:\radS) node {} -- ++ (270:\radS) node {} -- (w0);
\draw[line width=\mythick] (v0) -- ++ (330:\radS) node {} -- ++ (210:\radS) node {} -- (v0);

\begin{scope}[xshift=5cm]
\draw (300:\radB) node (v1) {} -- (215:.25*\radB) node (w1) {} -- (60:\radB) node (x1) {}
-- (v1);
\draw (v0) ++ (-1.2cm,-3mm) node[shape=rectangle, inner sep=0, draw=none]{\tiny{$\flr{(k-2)/2}$}};
\draw (w0) ++ (-1.8cm,0) node[shape=rectangle, inner sep=0, draw=none]{\tiny{$\flr{(k-1)/2}$}};
\draw (x0) ++ (-0.7cm,3mm) node[shape=rectangle, inner sep=0, draw=none]{\tiny{$\flr{k/2}$}};

\end{scope}

\foreach \start/\end\mylabel\where\rot\AB in {
v0/v1/{\flr{(k-2)/2}}/.5/0/Aa,
v0/w1/{\flr{(k-1)/2}}/.6/12/B,
v0/x1/{\flr{k/2}}/.5/20/Ab}
\draw[line width =\mythick, 2-thread\AB] (\start) -- (\end) 
node[pos=\where, shape=rectangle, inner sep=.2, draw=none, rotate=\rot]{\tiny{$\mylabel$}};

\begin{scope}[xshift=10cm]
\draw (300:\radB) node (v2) {} -- (215:.25*\radB) node (w2) {} -- (60:\radB) node (x2) {}
-- (v2);
\end{scope}

\foreach \start/\end\mylabel\where\rot\AB in {
v1/v2/{\flr{(k-2)/2}}/.5/0/Aa,
v1/w2/{\flr{(k-1)/2}}/.6/12/B,
v1/x2/{\flr{k/2}}/.5/20/Ab}
\draw[line width =\mythick, 2-thread\AB] (\start) -- (\end) 
node[pos=\where, shape=rectangle, inner sep=.2, draw=none, rotate=\rot]{\tiny{$\mylabel$}};

\begin{scope}[xshift=15cm]
\draw (300:\radB) node (v3) {} -- (215:.25*\radB) node (w3) {} -- (60:\radB) node (x3) {}
-- (v3);
\end{scope}

\foreach \start/\end\mylabel\where\rot\AB in {
v2/v3/{\flr{(k-2)/2}}/.5/0/Aa,
v2/w3/{\flr{(k-1)/2}}/.6/12/B,
v2/x3/{\flr{k/2}}/.5/20/Ab}
\draw[line width =\mythick, 2-thread\AB] (\start) -- (\end) 
node[pos=\where, shape=rectangle, inner sep=.2, draw=none, rotate=\rot]{\tiny{$\mylabel$}};

\draw (v3) -- ++ (330:\radS) node {} -- ++ (210:\radS) node {} -- (v3);
\end{scope}

%
%

\begin{scope}[yshift=-4cm, xscale=.7]


\begin{scope}[xshift=-2cm]
\draw (300:\radB) node (v0) {} (230:.25*\radB) node (w0) {} (60:\radB)
node[IStyle] (x0) {} (v0) -- (w0) -- (x0) -- (v0);

\draw[line width=\mythick] (x0) -- ++ (30:\radS) node {} -- ++ (150:\radS) node {} -- (x0);
\draw[line width=\mythick] (w0) ++ (150:\radS) node (w0a) {} ++ (270:\radS)
node[IStyle] (w0b) {} (w0a) -- (w0) -- (w0b) -- (w0a);
\draw[line width=\mythick] (v0) -- ++ (330:\radS) node (v0a) {} ++ (210:\radS)
node[IStyle] (v0b) {} (v0) -- (v0a) -- (v0b) -- (v0);

\begin{scope}[xshift=3.5cm]
\draw (300:\radB) node (v1) {} (230:.25*\radB) node (w1) {} (60:\radB)
node[IStyle] (x1) {} (v1) -- (w1) -- (x1) -- (v1);
\end{scope}

\foreach \start/\end\AB in {v0/v1/Ca, v0/w1/Da, v0/x1/Cd}
\draw[line width =\mythick, 2-thread\AB] (\start) -- (\end) node {};

\begin{scope}[xshift=7cm]
\draw (300:\radB) node (v2) {} (230:.25*\radB) node (w2) {} (60:\radB)
node[IStyle] (x2) {} (v2) -- (w2) -- (x2) -- (v2);
\end{scope}

\foreach \start/\end\AB in {v1/v2/Ca, v1/w2/Da, v1/x2/Cd}
\draw[line width =\mythick, 2-thread\AB] (\start) -- (\end);

\begin{scope}[xshift=10.5cm]
\draw (300:\radB) node (v3) {} (230:.25*\radB) node (w3) {} (60:\radB)
node[IStyle] (x3) {} (v3) -- (w3) -- (x3) -- (v3);
\end{scope}

\foreach \start/\end\AB in {v2/v3/Ca, v2/w3/Da, v2/x3/Cd}
\draw[line width =\mythick, 2-thread\AB] (\start) -- (\end);

\draw (v3)  ++ (330:\radS) node (v3a) {} ++ (210:\radS) node[IStyle] (v3b) {}
(v3a) -- (v3b) -- (v3);

\begin{scope}[xshift=3.5cm] 
\draw (60:\radB) node[IStyle] {};
\end{scope}

\end{scope}


\begin{scope}[xshift=13cm]

\draw (300:\radB) node[IStyle] (v0) {} (230:.25*\radB) node (w0) {} (60:\radB)
node (x0) {} (v0) -- (w0) (x0) -- (v0);

\draw[line width=\mythick] (x0) ++ (30:\radS) node[IStyle] (x0a) {} (x0) --
(x0a) -- ++ (150:\radS) node {}  -- (x0);
\draw[line width=\mythick] (w0) ++ (150:\radS) node (w0a) {} ++ (270:\radS)
node[IStyle] (w0b) {} (w0a) -- (w0) -- (w0b) -- (w0a);
\draw[line width=\mythick] (v0) -- ++ (330:\radS) node (v0a) {} ++ (210:\radS)
node (v0b) {} (v0) -- (v0a) -- (v0b) -- (v0);

\begin{scope}[xshift=3.5cm]
\draw (300:\radB) node[IStyle] (v1) {} (230:.25*\radB) node (w1) {} (60:\radB)
node (x1) {} (v1) -- (w1) -- (x1) -- (v1);
\end{scope}

\foreach \start/\end\AB in {v0/v1/Cc, v0/w1/Db, v0/x1/Cb}
\draw[line width =\mythick, 2-thread\AB] (\start) -- (\end) node {};

\begin{scope}[xshift=7cm]
\draw (300:\radB) node[IStyle] (v2) {} (230:.25*\radB) node (w2) {} (60:\radB)
node (x2) {} (v2) -- (w2) -- (x2) -- (v2);
\end{scope}

\foreach \start/\end\AB in {v1/v2/Cc, v1/w2/Db, v1/x2/Cb}
\draw[line width =\mythick, 2-thread\AB] (\start) -- (\end);

\begin{scope}[xshift=10.5cm]
\draw (300:\radB) node[IStyle] (v3) {} (230:.25*\radB) node (w3) {} (60:\radB)
node (x3) {} (v3) -- (w3) -- (x3) -- (v3);
\end{scope}

\foreach \start/\end\AB in {v2/v3/Cc, v2/w3/Db, v2/x3/Cb}
\draw[line width =\mythick, 2-thread\AB] (\start) -- (\end);

\draw (v3) -- ++ (330:\radS) node {} -- ++ (210:\radS) node {} -- (v3);

\begin{scope}[xshift=3.5cm] 
\draw (300:\radB) node[IStyle] {};
\end{scope}

\end{scope} 
\end{scope} 

\end{tikzpicture}
\caption{Top: The sharpness example $G_{k,3}$.  
Bold edges denote multiple pendent 3-cycles at a vertex or multiple 2-threads
between two vertices.
Bottom left: 
An $(I,F_k)$-coloring of $G_{k,3}-e$, where $e$ is on the 3-cycle pendent at $v_t$.
Bottom right: 
An $(I,F_k)$-coloring of $G_{k,3}-w_0x_0$. 
(Throughout, vertices in $I$ are black and vertices in $F$ are white.)
\label{sharpness-fig}
}
\end{figure}
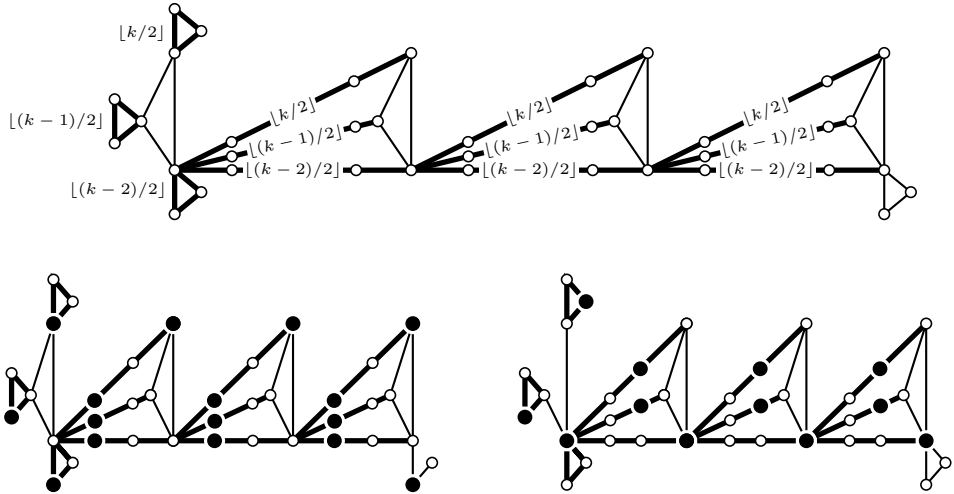

\begin{example}
We write \emph{add a pendent 3-cycle at a vertex $z$} to mean identify $z$
with a vertex of a new 3-cycle.  \emph{Adding $\ell$ pendent 3-cycles at $z$}
means repeating this $\ell$ times.  Similarly, \emph{adding a 2-thread from
$y$ to $z$} means adding new vertices $y'$ and $z'$ and new edges
$yy',y'z',z'z$.  (Adding $\ell$ 2-threads is defined analogously.)

We form an $(I,F_k)$-critical graph \Emph{$G_{k,t}$} as follows
(Figure~\ref{sharpness-fig} shows $G_{k,3}$).  Start with vertices
$v_0,\ldots,v_t$, $w_0,\ldots,w_t$, $x_0,\ldots,x_t$, where $\{v_j,w_j,x_j\}$ induces
$K_3$ for each $j\in\{0,\ldots,t\}$.  Now add $\flr{\frac{k-2}2}$ pendent 3-cycles
at $v_0$, $\flr{\frac{k-1}2}$ pendent 3-cycles at $w_0$, and $\flr{\frac{k}2}$ pendent
3-cycles at $x_0$.  For each $j\in\{1,\ldots,t\}$, add $\flr{\frac{k-2}2}$ 2-threads
from $v_{j-1}$ to $v_j$, $\flr{\frac{k-1}2}$ 2-threads from $v_{j-1}$ to
$w_j$, and $\flr{\frac{k}2}$ 2-threads from $v_{j-1}$ to $x_j$.  Finally, add a
single pendent 3-cycle at $v_t$.
\end{example}

The proof that $G_{k,t}$ is $(I,F_k)$-critical is a bit tedious, but we include
it below for completeness.  It is not needed for the proof of our Main Theorem,
so the reader should feel free to skim (or skip) it.
Intuitively, if we start to color $G_{k,t}$ from the left, each $v_j$ will be in
an $F$-component of order $k$; but for $v_t$, due to the extra pendent 3-cycle,
we get an $F$-component of order $k+1$, a contradiction.  When we delete some
edge $e$, at some point we are able to use $I$ on some $v_{j'}$, and we
continue using $I$ on each $v_j$ with $j\ge j'$.  The coloring of $G_{k,t}-e$
is some combination of the two colorings at the bottom of
Figure~\ref{sharpness-fig}.  (It is interesting to note that the
family $G_{2,t}$ is precisely those sharpness examples given by Borodin and
Kostochka in~\cite{BK-improper}.)

\begin{lem}
$G_{k,t}$ is $(I,F_k)$-critical for all integers $k\ge 2$ and $t\ge 0$.
\label{sharp-critical-lem}
\end{lem}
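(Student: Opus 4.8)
The plan is to verify the two conditions defining $(I,F_k)$-criticality for the trivially precolored $G_{k,t}$: (A) $G_{k,t}$ has no $(I,F_k)$-coloring, and (B) every proper subgraph does. The index-lowering clause in the definition is vacuous here (no vertex lies in any $U_j$ with $j\ge1$, nor in any $F_j$), and since an $(I,F_k)$-coloring restricts to every subgraph and every proper subgraph of the connected graph $G_{k,t}$ sits inside some $G_{k,t}-e$, for (B) it suffices to give an $(I,F_k)$-coloring of $G_{k,t}-e$ for each edge $e$. Throughout I will write $a:=\flr{(k-2)/2}$, $b:=\flr{(k-1)/2}$, $c:=\flr{k/2}$ and use the identities $a+b=k-2$, $b+c=k-1$, $a+c\le k-1$ (with equality exactly when $k$ is even), and $3+a+b+c>k$.

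For (A), I would argue by induction on $j$ that in any $(I,F_k)$-coloring of $G_{k,t}$ the vertex $v_j$ lies in $F$ and its $F$-component has exactly $k$ vertices. The induction rests on two simple facts: a pendent $3$-cycle at a vertex $z\in F$ puts at least one of its two private vertices into the $F$-component of $z$; and once the $F$-component of $v_{j-1}$ has $k$ vertices it is saturated, so every $2$-thread leaving $v_{j-1}$ has its $v_{j-1}$-side internal vertex in $I$ and hence its far internal vertex in $F$, giving $v_j$, $w_j$, $x_j$ at least $a$, $b$, $c$ forced $F$-neighbors respectively (for $j=0$ these come instead from the pendent $3$-cycles at $v_0$, $w_0$, $x_0$). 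Then in the triangle $v_jw_jx_j$: at most one vertex is in $I$; it is not none, since otherwise the common $F$-component of $v_j,w_j,x_j$ has order $\ge 3+a+b+c>k$; and it is not $v_j$, since otherwise the common $F$-component of $w_j$ and $x_j$ has order $\ge 2+b+c=k+1$ (for $j=0$ this last point is just that $v_0\in I$ is impossible). So $w_j\in I$ or $x_j\in I$; consequently $v_j\in F$, and its $F$-component has order at least $2+a+b=k$ when $x_j\in I$, or (this case arising only for $k$ odd, where $a+c=k-2$) at least $2+a+c=k$ when $w_j\in I$; as $F$-components have at most $k$ vertices, the order is exactly $k$, which is what the next step of the induction needs. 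Running the induction up to $j=t$ and then accounting for the extra pendent $3$-cycle at $v_t$ forces another vertex into $v_t$'s $F$-component, giving it order $k+1$ — a contradiction, so no $(I,F_k)$-coloring exists. (When $t=0$ the vertex $v_0$ carries $a+1$ pendent $3$-cycles and the contradiction already appears at the base of the induction.)

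For (B), I would build colorings of $G_{k,t}-e$ from two templates. The first, $\mathcal{L}$, puts $v_j,w_j$ in $F$ and $x_j$ in $I$ for all $j$, colors every $2$-thread with its $v_{j-1}$-side internal vertex in $I$ and its far internal vertex in $F$, colors each pendent $3$-cycle at $v_0$ or $w_0$ with one private vertex in $F$ and one in $I$, and each at $x_0$ with both private vertices in $F$. This is an $(I,F_k)$-coloring of $G_{k,t}$ everywhere except on the pendent $3$-cycle at $v_t$, so it disposes of those three edges; small local modifications of $\mathcal{L}$ also dispose of the edges lying on or feeding the $F$-component of $v_t$ (such as $v_tw_t$, $v_tx_t$, $w_tx_t$, or a thread edge into $v_t$ or $w_t$), since deleting any of them lets one keep that component of order at most $k$. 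The second template, $\mathcal{R}$, puts $v_j$ in $I$ and $w_j,x_j$ in $F$ for all $j$ (with threads and pendent cycles colored to match); it is an $(I,F_k)$-coloring of $G_{k,t}-w_0x_0$, and the variant with $v_0\in I$ but $w_0,x_0$ together in one $F$-component handles the remaining edges on $T_0$ or on a pendent $3$-cycle at $x_0$ (deleting such an edge lets the affected gadget contribute nothing, bringing that component down to order $k$). For any other edge $e$ I would use a hybrid $\mathcal{M}_{j^*}$: color indices below $j^*$ as in $\mathcal{L}$, put $v_{j^*}$ (and in some cases $x_{j^*}$) in $I$, and color indices above $j^*$ as in $\mathcal{R}$. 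The delicate point is the junction: since in $\mathcal{L}$ every component of a $v_i$ already has order $k$, the switch at $j^*$ would force the $w_{j^*}x_{j^*}$-component up to order $2+b+c=k+1$, so I need one spare vertex in $v_{j^*-1}$'s component and then feed it in by absorbing one thread-internal vertex. Deleting $e$ supplies that spare vertex exactly when $e$ lies on the tree that the coloring takes as $v_{j^*-1}$'s component — a triangle edge $v_{j^*-1}w_{j^*-1}$, a thread edge feeding index $j^*-1$, or, when $j^*=1$, an edge of a pendent $3$-cycle at $v_0$ or $w_0$. When instead $e$ is incident to a vertex the coloring already puts in $I$ (a triangle edge $v_jx_j$ or $w_jx_j$, or a thread edge feeding $x_j$), I would simply recolor that triangle, putting $v_j$ — or $v_j$ and $x_j$ — in $I$, and transition there directly, with no absorption needed. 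A routine but lengthy check, using $a+b=k-2$ and $b+c=k-1$, then shows that every edge of $G_{k,t}$ falls into one of these families; the cases $t=0$ and $t=1$ only collapse some of them.

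The hard part is clearly (B): confirming the hybrid colorings are legitimate — above all the single-spare-vertex accounting at the junctions — and checking that every edge, including the boundary edges near $T_0$ and near the pendent $3$-cycle at $v_t$, and the degeneracies at small $t$, admits a suitable choice of template and transition index. Part (A), by contrast, is a clean and short induction.
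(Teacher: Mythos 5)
Your proposal is correct and follows essentially the same route as the paper: for non-colorability, an induction forcing each $v_j$ into a saturated $F$-component of order exactly $k$ (with the contradiction appearing at $v_t$), and for criticality, stitching together the ``left'' and ``right'' template colorings at the index where the deleted edge supplies the single spare slot needed to absorb a thread-internal vertex into the preceding $F$-component. The case-checking you defer is of the same routine character as in the paper's own proof.
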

\begin{proof}
Let $G^j_{k,t}$ denote the subgraph of $G_{k,t}$ induced by
$v_0,\ldots,v_j,w_0,\ldots,w_j,x_0,\ldots,x_j$ along with their pendent
3-cycles and any 2-threads between them.
We show by induction that $G^j_{k,t}$ has an $(I,F_k)$-coloring for each $j<t$;
furthermore, in each such coloring $v_j$ is in an $F$-component of order $k$.
Consider $G^0_{k,t}$.  Because of their pendent 3-cycles, $w_0$ and $x_0$ will
have at least $\flr{\frac{k-1}2}$ and $\flr{\frac{k}2}$ $F$-neighbors (respectively) in
every coloring of $G^0_{k,t}$.  If both $w_0$ and $x_0$ are colored $F$, then
they lie in an $F$-component of order at least $\flr{\frac{k-1}2} +
\flr{\frac{k}2}+2 = k+1$, a contradiction.  So one of $w_0$ and $x_0$ must
be colored $I$.  Thus, $v_0$ is colored $F$; so $v_0$ lies in an $F$-component
of order at least $\flr{\frac{k-2}2}+\flr{\frac{k-1}2}+2=k$.  To see that
$G^0_{k,t}$ has a coloring, color $x_0$ with $I$ and $v_0$ and $w_0$ with $F$.
For each 3-cycle pendent at $v_0$ or $w_0$, use $I$ on one vertex and $F$ on
the other.  For each 3-cycle pendent at $x_0$, use $F$ on both vertices.  This
proves the base case.

Now we consider the induction step.  Since $v_{j-1}$ is in an $F$-component of
order $k$ in $G^{j-1}_{k,t}$, each neighbor of $v_{j-1}$ on a 2-thread
to $\{v_j,w_j,x_j\}$ must be colored $I$; thus, each of \emph{their} neighbors
must be colored $F$.  Now the analysis is nearly identical that that for $j=0$.
To extend the coloring to all of $G^j_{k,t}$, color $x_j$ with $I$ and color
$v_j$ and $w_j$ with $F$.  If we instead tried to color $v_j$ with $I$, then
$w_j$ and $x_j$ must both be colored $F$, so they lie in an $F$-component of
order $\flr{\frac{k}2}+\flr{\frac{k-1}2}+2=k+1$, a contradiction.

To see that $G_{k,t}$ has no coloring, note that such a coloring would
have $v_t$ in an $F$-component of order $k$ (as in the induction step
above).  However, due to the extra pendent 3-cycle at $v_t$, this creates an
$F$-component of order $k+1$, a contradiction.

Finally, we show that $G_{k,t}$ is $(I,F_k)$-critical.  That is, for each $e\in
E(G_{k,t})$ subgraph $G_{k,t}-e$ has a coloring.  By induction we
first prove the stronger statement that if $e\in E(G^{j-1}_{k,t})$, then
$G^j_{k,t}-e$ has a coloring with $v_j$ colored $I$.  (The intuition
is that once we get this for some $j'$, then we can ensure it for all $j'>j$,
so can finish the coloring.) Afterward, we use this to prove that $G_{k,t}-e$
has an $(I,F_k)$-coloring for every $e\in E(G_{k,t})$.

Base case: $j=1$.  If $e$ is not on a pendent 3-cycle at $v_0$, then
$G^{j-1}_{k,t}-e$ has a coloring  in which $v_0$ is colored $I$, as follows.  Either (a)
$e\in \{v_0w_0,v_0x_0\}$, so we can color two vertices in $\{v_0,w_0,x_0\}$ with
$I$ or (b) $e=w_0x_0$ or $e$ is on a 3-cycle pendent at $w_0$ or $x_0$, so we
can color both $w_0$ and $x_0$ with $F$.  If we can color $v_0$ with $I$, then
we extend to $G^1_{k,t}-e$ by using $F$ on all neighbors of $v_0$ on 2-threads,
using $I$ on $v_1$ and neighbors of $w_1$ and $x_1$ on 2-threads, and using $F$
on all remaining vertices.
Assume instead that $e$ is on a pendent 3-cycle at $v_0$.  Now we color both
endpoints of $e$ with $I$, so that $v_0$ is in an $F$-component of order only
$k-1$.  This enables us to use $F$ on some neighbor of $v_0$ on a 2-thread to
$x_1$ (and use $I$ on its neighbor on that 2-thread).  Now we use $F$ on $w_1$
and $x_1$, and use $I$ on $v_1$.  This finishes the base case.

The induction step is nearly identical to the base case.  Suppose $e\in
E(G^{j-1}_{k,t})$.  If $e\in E(G^{j-2}_{k,t})$, then $G^{j-1}_{k,t}-e$ has a
coloring in which $v_{j-1}$ uses $I$.  We extend it to $G^j_{k,t}-e$ in exactly
the same way as extending the coloring of $G^0_{k,t}-e$ to $G^1_{k,t}-e$ above.
Otherwise $e\in E(G^{j-1}_{k,t})\setminus E(G^{j-2}_{k,t})$. Recall, from above,
that $G^{j-2}_{k,t}$ has a coloring, and it has $v_{j-2}$ in an $F$-component of
order $k$.  Now the extension to $G^{j-1}_{k,t}$ is nearly identical to coloring
$G^0_{k,t}-e$ (from the base case at the start of the proof).  This proves our
stronger statement by induction.

Finally, we prove that $G_{k,t}-e$ has a coloring for every $e\in E(G_{k,t})$.
If $e$ is not on the 3-cycle pendent at $v_t$, then we can color $G_{k,t}-e$
with $I$ on $v_t$, so the extra pendent 3-cycle does not matter.  If $e$ is on
the pendent 3-cycle, then we color so that $v_t$ is in an $F$-component of order
$k$ without the extra 3-cycle.  However, now $v_t$ has only a single neighbor on
that pendent 3-cycle, so we color that neighbor with $I$ and the remaining
vertex with $F$.
\end{proof}

\subsection{Gadgets: Where the Coefficients Come From}
\label{gadgets-sec}
Here we explain our choice of weights in Definition~\ref{rho-defn}:  $C_E$,
$C_{U,j}$, $C_{F,j}$, $C_I$.  Everything starts with our sharpness examples in
Section~\ref{sharpness-sec}.  We must choose $C_{U,0}$ and $C_E$ so that all of
these examples have the same potential, i.e., $\rho^k(G_{k,t+1})=\rho^k(G_{k,t})$
for all positive $t$.  Note that
$|V(G_{k,t+1})|-|V(G_{k,t})|=3+2(\flr{\frac{k}2}+\flr{\frac{k-1}2}+\flr{\frac{k-2}2})=C_E$ and
$|E(G_{k,t+1})|-|E(G_{k,t})|=3+3(\flr{\frac{k}2}+\flr{\frac{k-1}2}+\flr{\frac{k-2}2})=C_{U,0}$.
\emph{This} is how we chose $C_E$ and $C_{U,0}$.

For each of $I$, $F_j$ and $U_j$ ($j>0$) we construct a gadget, consisting of
edges and vertices in $U_0$.  Each gadget has a specified vertex $v$ which the
gadget simulates having the desired precoloring; see Figure~\ref{gadgets-fig}.  
The easiest of these is $U_1$.
The gadget is simply a 3-cycle.  Suppose we add a pendent 3-cycle $C$ at any
vertex $v$.  In any coloring of $G$ (with $C$ added), at
least one neighbor of $v$ on $C$ is colored $F$.  Further, if $v$ is colored
$F$, then we can color the remaining vertices of $C$ so that exactly one is in $F$.
Thus, this gadget precisely simulates $v$ being in $U_1$.  For each larger $j$,
the gadget for $U_j$ simply adds $j$ pendent 3-cycles at $v$.  Alternatively, we
can define the gadgets recursively, where adding a pendent 3-cycle moves a
vertex from $U_j$ to $U_{j+1}$.

\begin{figure}[!th]
\centering
\def\radB{1cm}
\def\radS{.65cm}
\def\mythick{.7mm}
\begin{tikzpicture}[semithick]
\tikzset{every node/.style=uStyle}

\begin{scope}[xshift=1cm]
\draw (270:\radB) node (v0) {} (150:\radB) node (w0) {} (30:\radB) node (x0) {}
(v0) -- (w0) -- (x0) -- (v0);
\draw (v0) ++ (.35cm,0) node[draw=none] {\footnotesize{$v$}};
\draw (v0) ++ (0,-.5cm) node[rectangle, draw=none] {\scriptsize{$U_j\to
U_{j+1}$ (always)}};
\draw (v0) ++ (0,-.9cm) node[rectangle, draw=none] {\scriptsize{$F_j\to
F_{j+1}$ ($j\ne \flr{(k+1)/2})$}};
\end{scope}

\begin{scope}[xshift=5cm]
\draw (270:\radB) node (v0) {} (150:\radB) node (w0) {} (30:\radB) node (x0) {}
(v0) -- (w0) -- (x0) -- (v0);
\draw (v0) ++ (.35cm,0) node[draw=none] {\footnotesize{$v$}};
\draw (v0) ++ (0,-.5cm) node[rectangle, draw=none] {\footnotesize{$U_0\to
F_{\flr{(k+3)/2}}$}};
\draw[line width=\mythick] (x0) -- ++ (60:\radS) node {} -- ++ (180:\radS) node {} -- (x0);
\draw[line width=\mythick] (w0) -- ++ (60:\radS) node {} -- ++ (180:\radS) node {} -- (w0);
\draw (w0) ++ (0,.9cm) node[draw=none,rectangle] {\scriptsize{$\flr{(k-1)/2}$}};
\draw (x0) ++ (0,.9cm) node[draw=none,rectangle] {\scriptsize{$\flr{k/2}$}};
\end{scope}

\begin{scope}[xshift=8cm]
\draw (270:\radB) node (v0) {} --++ (0,1.5cm) node (w) {};
\draw (v0) ++ (.35cm,0) node[draw=none] {\footnotesize{$v$}};
\draw (w) ++ (.4cm,0) node[draw=none] {\footnotesize{$F_k$}};
\draw (v0) ++ (0,-.5cm) node[rectangle, draw=none] {\footnotesize{$U_0\to I$~~~}};
\end{scope}

\begin{scope}[xshift=11cm]
\draw (270:\radB) node (v0) {} --++ (0,1.5cm) node (w) {};
\draw (v0) ++ (.35cm,0) node[draw=none] {\footnotesize{$v$}};
\draw (w) ++ (.35cm,0) node[draw=none] {\footnotesize{$I$}};
\draw (v0) ++ (0,-.5cm) node[rectangle, draw=none] {\footnotesize{$U_0\to F_1$}};
\end{scope}

\end{tikzpicture}
\caption{Gadgets to simulate precoloring.\label{gadgets-fig}}
\end{figure}
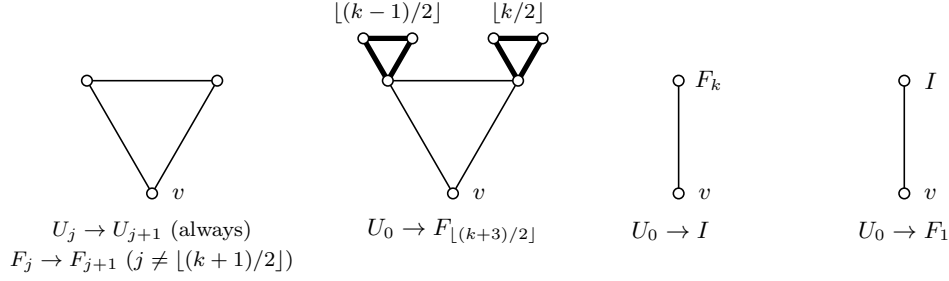

But how do we simulate a vertex in $F_1$?  It is simpler (surprisingly) to start
with the gadget for $F_k$.  This is just the subgraph of $G_{k,t}$ induced by
$v_0,w_0,x_0$ and their pendent 3-cycles. 
Precisely, it is formed from a $K_3$ by adding $\flr{\frac{k-2}2}$ pendent 3-cycles
at $v$ and adding $\flr{\frac{k-1}2}$ and $\flr{\frac{k}2}$ pendent 3-cycles at the two other
vertices of the $K_3$; see the left end of Figure~\ref{sharpness-fig}.  In the
proof of Lemma~\ref{sharp-critical-lem}, we
showed that any coloring of this subgraph must have $v_0$ in an $F$-component
of order $k$.  The potential of this subgraph is 0, so $C_{F,k}=0$.  The gadget
for $I$ is simply an edge to a vertex in $F_k$.  So $C_I = C_{U,0}-C_E+C_{F,k}
= \frac{C_E-3}2$.  Finally, the gadget for $F_1$ is an edge to a vertex in $I$. 
So $C_{F,1}=C_{U,0}+C_I-C_E=C_E-3$.  Adding a pendent 3-cycle at a vertex in
$C_{F,j}$ moves it to $C_{F,j+1}$.  So we are tempted to say that
$C_{F,j+1}=C_{F,j}-3$ for all $j$; but this is not quite right!  We must
simulate each $F_j$ as efficiently as possible.  We can do slightly better for
$F_{j'}$ when $j'=\flr{\frac{k+3}2}$.  The best gadget for $F_{j'}$ is shown in
Figure~\ref{gadgets-fig}; it is formed from the gadget for $F_k$ by
\emph{removing} $k-j'$ pendent 3-cycles at $v_0$.  This gadget
gives $C_{F,j'}=3k-3j'$ (rather than $C_E-3j'$, which we get if we build up
from the gadget for $F_1$).  Now for each $j>j'$, we add $j-j'$ pendent
3-cycles at $v$.  Thus, $C_{F,j}=3k-3j$ for all $j\ge j'$.
\smallskip

It is enlightening to notice that the Main Theorem is logically equivalent to its
restriction to graphs that are precolored trivially.  Since this is not needed for our
proof of the Main Theorem, we are content to provide only a proof sketch.

\begin{equiv-lem}
The Main Theorem is true if and only if it is true when restricted to graphs
with no precolored vertices.
\end{equiv-lem}
\begin{proof}[Proof Sketch]
The case with a trivial precoloring is clearly implied by the general case.
Now we show the reverse implication.
Suppose the Main Theorem is false for some specific value of $k$.  Let $G$ be a
counterexample; among all counterexamples, choose one that minimizes $|V(G)|$.
We will construct another counterexample $\widehat{G}$ (for the same value of $k$)
with $U_0=V(\widehat{G})$.

If $G$ has a vertex $v$ precolored $I$, then we form $G'$ from $G-v$ 
by coloring each neighbor of $v$ (in $G$) with $F$.  Since $G$ is
$(I,F_k)$-critical, so is $G'$.  Since $G'$ is smaller than $G$, we know that
$\rho^k_{G'}(V(G'))\le -3$.  It is straightforward to check that $\rho^k_G(V(G))\le
\rho^k_{G'}(V(G'))\le -3$ (see Lemma~\ref{noIFk-lem} for details); so $G$ is not
a counterexample, a contradiction.  Thus, $I=\emptyset$. 

Now we form a graph $\widehat{G}$ from $G$ by identifying each vertex $v\in V(G)$
colored $U_j$ or $F_j$ with the vertex $v$ in the corresponding gadget (and
removing the precoloring).  It is easy to check that $-2\le \rho^k_G(V(G)) =
\rho^k_G(V(\widehat{G}))$; indeed, this is exactly why we chose the values we did
for $C_{U,j}$ and $C_{F,j}$.  So all that remains is to show that $\widehat{G}$
is $(I,F_k)$-critical.  

First, note that each
gadget precisely simulates the precoloring. That is, every $(I,F_k)$-coloring of
the gadget for each $U_j$ either gives $v$ at least $j$ $F$-neighbors or it colors $v$
with $I$; furthermore, some coloring of the gadget for $U_j$ colors $v$ with $I$
and some other coloring of the gadget for $U_j$  colors $v$ with $F$ and gives
$v$ exactly $j$ $F$-neighbors.  Similarly, every $(I,F_k)$-coloring
of the gadget for each $F_j$ colors
$v$ with $F$ and puts it in an $F$-component of order at least $j$; and some
coloring of the gadget for $F_j$ colors $v$ with $F$ and puts it in a component
of order exactly $j$.  Second, note that deleting any edge from the gadget for
$U_j$ allows a coloring in which $v$ has at most $j-1$ $F$-neighbors. 
Similarly, deleting any edge from the gadget for $F_j$ allows a coloring in
which $v$ is in an $F$-component of order at most $j-1$.  Thus, $\widehat{G}$
is $(I,F_k)$-critical.
\end{proof}

Since the Main Theorem is equivalent to its restriction to graphs with trivial
precolorings, what is the point of allowing precolorings?  The point is to order
the graphs in a way that is more useful for induction (note that
$V(\widehat{G})>V(G)$, so allowing precolorings enables us to simulate
$\widehat{G}$ with a precolored graph $G$ that is smaller than $\widehat{G}$). 
In fact, we could write the whole proof without precolorings, but the partial
order on the graphs needed for that version would be much harder to understand
and keep track of.

\subsection{The Potential Method: A Brief Introduction}
\label{potential-sec}
The function $\rho^k$ is called the \Emph{potential function}, and the
proof technique we employ in this paper is called the \EmphE{potential method}{3mm}. 
Here we give a brief overview.

The essential first step in any proof using the potential method
is to find an infinite family of sharpness examples.  These examples determine 
a sharp necessary condition on $\mad(G)$.  So we use them to choose the 
coefficients $C_{U,0}$ and $C_E$, which define $\rho$.  The necessary
generalization (allowing precoloring and specifically all the different options
$U_j$ and $F_j$) varies with the problem.  For some problems, we do not use precoloring
at all.  In one case we allowed parallel edges~\cite{CY-IF}.  Whenever a
generalization allows precolorings, the coefficients are all determined by the
gadgets, as discussed in the previous section (so it is essential to find the
gadgets with highest potential).

Behind every proof using the potential method is a typical proof using
reducibility and discharging.  Consider, for example, Theorem~\ref{mad-thm}.
Suppose we are aiming to prove that theorem and we want to show that a certain
configuration $H$ is reducible.  Typically, we color $G-V(H)$ by induction and
then show how to extend the coloring to $V(H)$.  The reason we can color
$G-V(H)$ by induction is that, by definition, $\mad(G-V(H))\le \mad(G)$; since
$G-V(H)$ is smaller than $G$, the theorem holds for $G-V(H)$.  \emph{The heart of the
potential method is to show that we can slightly modify $G-V(H)$ before we color
it by induction.}  This modification (say, adding some $F$-neighbors) enables us
to require more of our coloring of $G-V(H)$.  Since this coloring of $G-V(H)$ 
is more constrained, we may be able to extend it to $V(H)$, even if we could not
do so for an arbitrary $(I,F_k)$-coloring of $G-V(H)$.  To make all of this
precise, we need a lower bound on $\rho^k(R)$ for all $R\subsetneq V(G)$.
Such a bound is called a Gap Lemma.  Our modifications may lower $\rho^k(R)$,
but if we can ensure that even this lowered potential is at least $-2$ for all
$R$, then we know by induction that $G'$ cannot contain an $(I,F_k)$-critical
subgraph, so it must have an $(I,F_k)$-coloring.

Once we have proved that various configurations are reducible, we use
discharging to show that a (hypothetical, smallest) counterexample $G$ to our
Main Theorem cannot exist.  We assign charge so that the 
assumption $\rho^k(V(G))\ge -2$ implies that the sum of
all initial charges is at most 4. (This is analogous,
for graphs with $\mad<\alpha$, to using the initial charge
$\ch(v):=d(v)-\alpha$.)  As a first step, we show that each vertex ends with
nonnegative charge.  With a bit more work, we show that if $G$ has no
coloring, then its total charge exceeds 4, so $G$ is not a counterexample.

Our proof of the Main Theorem naturally translates into a polynomial-time
algorithm.  This is typical of proofs using the potential method.  The
translation is mostly straightforward.  The least obvious step is efficiently
finding a set of minimum potential, which can be done using a max-flow/min-cut
algorithm.  We discuss algorithms at length in~\cite[Sections~2.3 and 5]{CY-IF}.

\section{Starting the Proof of the Main Theorem}
\label{proof-start-sec}
Fix an integer $k\ge 2$.  In what follows, we typically write $\rho$ rather than $\rho^k$.  
We say that a graph $G_1$ is \Emph{smaller} than a graph $G_2$ if either (a)
$|V(G_1)|<|V(G_2)|$ or (b) $|V(G_1)|=|V(G_2)|$ and $|E(G_1)|<|E(G_2)|$.
Assume that the Main Theorem is false for $k$.  Let $G$ be a smallest counterexample.
In this section, we prove a number of lemmas restricting the structure of $G$.

\begin{lem}
$I\cup 
U_k \cup
F_k=\emptyset$.
\label{noI-lem}
\label{noFk-lem}
\label{noUk-lem}
\label{noIFk-lem}
\end{lem}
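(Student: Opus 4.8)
I want to show that in a smallest counterexample $G$ to the Main Theorem, there are no vertices precolored $I$, no vertices in $U_k$, and no vertices in $F_k$. The strategy for each case is the same: if such a vertex $v$ exists, I produce a smaller graph $G'$ which is still $(I,F_k)$-critical (or at least has no $(I,F_k)$-coloring and hence contains an $(I,F_k)$-critical subgraph smaller than $G$), apply the Main Theorem to $G'$ by minimality, and then check the arithmetic inequality $\rho^k_G(V(G)) \le \rho^k_{G'}(V(G'))$ (or $\rho^k_G(V(G)) \le \rho^k_{G'}(V(H))$ for the relevant critical subgraph $H \subseteq G'$), which forces $\rho^k_G(V(G)) \le -3$, contradicting that $G$ is a counterexample.

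\textbf{The three reductions.} For a vertex $v$ precolored $I$: form $G'$ from $G - v$ by moving each neighbor $u$ of $v$ one step "down" the $F$-chain, i.e.\ if $u \in U_j$ put it in $F_{j+1}$... wait, more precisely recolor each neighbor of $v$ with $F$ in the sense described after Definition~\ref{rho-defn} (adding the constraint that $v$'s neighbors are $F$-colored). Any $(I,F_k)$-coloring of $G'$ extends to one of $G$ by coloring $v$ with $I$, and conversely, so $G'$ has none; criticality of $G$ transfers to $G'$. Then I compare potentials: deleting $v$ removes the term $C_I$, removes $d(v)$ edges at cost $-C_E d(v)$ (added back with sign), and reclassifies $d(v)$ neighbors, each changing its coefficient from $C_{U,j}$ or $C_{F,j}$ to the appropriate smaller value; the point is that the net change is $\ge 0$, because $C_I$ was defined precisely as $C_{U,0} + C_{F,k} - C_E$ and analogous identities hold for how adding an $F$-neighbor shifts the coefficients. (This is exactly the computation flagged in the Equivalence Lemma's proof sketch as "straightforward to check.") For a vertex $v \in U_k$: an uncolored vertex with $k$ fake $F$-neighbors cannot be colored $F$ (that would give an $F$-component of order $\ge k+1$), so it must be colored $I$ in every coloring; hence $G$ with $v$ moved from $U_k$ to $I$ (call it $G'$) has exactly the same colorings, so $G'$ has none. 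But criticality of $G$ says reducing the index $k$ to $k-1$ on $v$ yields a coloring — and here's the subtlety — we need $G'$ to be critical or to contain a small critical subgraph. Actually cleaner: since $v \in U_k$ forces $v \in I$ in any would-be coloring, and criticality allows $U_k \to U_{k-1}$ to be colorable, I directly derive a contradiction via the potential bound applied to the critical subgraph inside $G'$, using $C_{U,k} \le C_I$ (indeed $C_{U,k} = C_{U,0} - 3k = \frac{3C_E - 3}{2} - 3k$ versus $C_I = \frac{C_E-3}{2}$, and one checks $C_{U,k} \le C_I$, so replacing $U_k$ by $I$ only raises potential, giving $\rho^k_G(V(G)) \le \rho^k_{G'}(\cdot) \le -3$). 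For $v \in F_k$: every neighbor of $v$ in $\bigcup_{j<k} U_j$ cannot be colored $F$, so it behaves as if it received an extra $F$-neighbor; form $G'$ from $G - v$ by adding one $F$-neighbor to each such neighbor (and leaving other neighbors alone since if a neighbor is in some $F_j$ it is already $F$-colored and the component-size bookkeeping still works). Colorings of $G'$ correspond to colorings of $G$, so $G'$ has none; then compare potentials using $C_{F,k} = 0$ and the coefficient identities, again getting $\rho^k_G(V(G)) \le \rho^k_{G'}(V(G')) \le -3$.

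\textbf{Main obstacle.} The delicate part is not the existence of $G'$ but the potential comparison: I must verify in each case that the reclassification of neighbors never decreases $\rho^k$, handling separately neighbors in $U_0, U_1, \dots, U_{k-1}, F_1, \dots, F_k$, and in particular the places where the coefficient formula for $F_j$ switches regime at $j = \lfloor (k+1)/2\rfloor$ (so that "adding an $F$-neighbor" does not always decrease the coefficient by exactly $3$ — recall the gadget discussion, where $F_{\lfloor(k+3)/2\rfloor}$ is cheaper than the naive bound). I will state a small helper claim — "moving a vertex from $U_j$ to $U_{j+1}$, or from $F_j$ to $F_{j+1}$ (for $j \ne \lfloor(k+1)/2\rfloor$), changes $\rho^k$ by $-3$; moving from $F_{\lfloor(k+1)/2\rfloor}$ to $F_{\lfloor(k+3)/2\rfloor}$ changes it by $C_{U,\lfloor(k-1)/2\rfloor}+C_{U,\lceil(k-1)/2\rceil} - 2C_E$... " — or rather I will simply verify the three needed inequalities directly from Definition~\ref{rho-defn}. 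I also need the easy observation that if $G'$ has no $(I,F_k)$-coloring then it contains an $(I,F_k)$-critical subgraph $H$, and $\rho^k_{G'}(V(G')) \le \rho^k_{G'}(V(H)) $ fails in general — so I should instead argue that $G'$ is itself critical (which follows because $G$ is critical and the reduction is reversible on every proper subgraph and under index-lowering), so that the Main Theorem applies to $G'$ directly and gives $\rho^k_{G'}(V(G')) \le -3$.
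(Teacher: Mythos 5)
Your overall strategy --- delete or recolor the offending vertex, transfer criticality to a smaller graph, and compare potentials --- is exactly the paper's, and your treatments of the $I$ and $U_k$ cases are essentially right in outline. (For $U_k$ the paper, like you, moves $v$ to $I$ and notes the potential only increases; but since that graph has the same number of vertices and edges as $G$, minimality does not apply to it directly, and one must then run the $I$-case reduction on it to reach a genuinely smaller graph. Your closing claim that ``the Main Theorem applies to $G'$ directly'' glosses over precisely the subtlety you had flagged two sentences earlier.)

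The genuine gap is the $F_k$ case. A neighbor $u\in U_j$ of a vertex $v\in F_k$ does not ``behave as if it received an extra $F$-neighbor'': it must be colored $I$ outright, because coloring $u$ with $F$ would merge it into the $F$-component of $v$, which already has order $k$ counting fake neighbors. Your reduction moves $u$ only from $U_j$ to $U_{j+1}$, so whenever $j<k-1$ the graph $G'$ still admits colorings in which $u$ receives $F$; such a coloring of $G'$ does not lift back to $G$, so the implication ``$G$ has no coloring $\Rightarrow G'$ has no coloring'' fails, $G'$ need not be critical or even uncolorable, and you cannot conclude $\rho_{G'}(V(G'))\le -3$. (Your parenthetical about neighbors already precolored $F_j$ has the same problem in a worse form: deleting $v$ while leaving such a neighbor in $F_j$ forgets the $k$ vertices of $v$'s component entirely; in fact criticality rules this configuration out, since $G[\{u,v\}]$ would be an uncolorable proper subgraph.) The correct reduction, which the paper uses, is to delete $v$ and put every neighbor of $v$ into $I$; the potential then changes by $C_{F,k}+(C_{U,0}-C_I-C_E)\,d(v)=0$, using $C_{F,k}=0$ and $C_I=C_{U,0}+C_{F,k}-C_E$. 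With that fix, the rest of your argument goes through.
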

\begin{proof}
Assume, to the contrary, that $I\cup U_k\cup F_k\ne\emptyset$.
First, suppose there exists $v\in F_k$.  Form $G'$ from $G$ by
deleting $v$ and adding each neighbor of $v$ to $I$.  For each $R'\subseteq
V(G')$, subgraph $G'[R']$ has an $(I,F_k)$-coloring if and only if
$G[R'\cup\{v\}]$ does.  Since $G$ is $(I,F_k)$-critical, so is $G'$.  Since
$G'$ is smaller than $G$, by the minimality of $G$, we have $\rho_{G'}(V(G'))\le
-3$.  However, now $\rho_G(V(G)) \le \rho_{G'}(V(G'))+(C_{U,0}-C_I-C_E)d(v) =
\rho_{G'}(V(G'))\le-3$.  Thus, $G$ is not a counterexample.

Suppose instead there exists $v\in I$.  Form $G'$ from $G$ by deleting $v$ and adding
each neighbor of $v$ to $F$ (we assume $d(v)\ge 1$).  
For each $R'\subseteq V(G')$, subgraph $G'[R']$ has an $(I,F_k)$-coloring if and
only if $G[R'\cup\{v\}]$ does.  Since $G$ is $(I,F_k)$-critical, so is $G'$.
Since $G'$ is smaller than $G$, by the minimality of $G$ we have
$\rho_{G'}(V(G'))\le -3$.  
Coloring a vertex in $U_j$ with $F$ moves it to $F_{j+1}$, so decreases its
potential by $C_{U,j}-C_{F,j+1}\le \frac{3C_E-3}2-3j-(C_E-3(j+1)) = \frac{C_E+3}2$.
So $\rho_G(V(G)) \le \rho_{G'}(V(G'))+(\frac{C_E+3}2)d_G(v)-C_Ed_G(v)+C_I
=\rho_{G'}(V(G'))+(\frac{3-C_E}2)d(v)+\frac{C_E-3}2\le \rho_{G'}(V(G'))\le -3$.  Thus,
$G$ is not a counterexample.

Finally, suppose there exists $v\in U_k$.  Form $G'$ from $G$ by coloring $v$
with $I$.  For each $R'\subseteq V(G')$, subgraph $G'[R']$ has an
$(I,F_k)$-coloring if and only $G[R']$ does.  Since $G$ is $(I,F_k)$-critical,
so is $G'$.  Note that $\rho_{G'}(V(G'))=\rho_G(V(G))-C_{U,k}+C_I >
\rho_G(V(G))$.  Now repeating the argument in the previous paragraph 
shows that $G$ is not a smallest counterexample.
\end{proof}

At various points in our proof, we will construct a graph $G'$ from some
subgraph of $G$ by adding $F$-neighbors to one or more vertices.  If this ever
produces an uncolored vertex $v$ with at least $k$ $F$-neighbors, then we
recolor $v$ with $I$, as in the final paragraph of the previous proof.

\begin{lem}
\label{no two adjacent F}
For each edge $vw$, at least one of $v$ and $w$ is in $U$.
\label{noFedge-lem}
\end{lem}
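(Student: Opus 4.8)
For each edge $vw$, at least one of $v$ and $w$ is in $U = \bigcup_{j=0}^{k-1} U_j$.

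The only way the conclusion can fail, by Lemma~\ref{noIFk-lem}, is if some edge $vw$ has $v\in F_a$ and $w\in F_b$ with $1\le a,b\le k-1$; assume this. In every $(I,F_k)$-coloring both $v$ and $w$ lie in $F'$ (they are precolored $F$), and since $vw$ is an edge they lie in a common $F$-component; counting the fake neighbours supplied by the precoloring, that component has at least $(a-1)+(b-1)+2=a+b$ vertices. The plan is to derive a contradiction in two cases, according to whether $a+b\ge k+1$ or $a+b\le k$.

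If $a+b\ge k+1$, then $G$ has no coloring, and in fact the subgraph induced on $\{v,w\}$ already has none. Since every proper subgraph of $G$ is colorable (criticality), this forces $V(G)=\{v,w\}$. Note $a+b\ge k+1\ge 3$, so $\max\{a,b\}\ge 2$; reducing that label by one must yield a coloring of $G$, which requires the resulting forced component of size $a+b-1$ to have at most $k$ vertices, so $a+b=k+1$. Then $\rho_G(V(G))=C_{F,a}+C_{F,b}-C_E|E(G)|\le C_{F,a}+C_{F,b}-C_E$, and a short case check on whether $a$ and $b$ lie in the first or second range of Definition~\ref{rho-defn} (both in the second range is impossible, since that forces $a+b\ge 2\flr{(k+3)/2}>k$) gives $C_{F,a}+C_{F,b}-C_E\le -3$, just as in the proof of Lemma~\ref{noIFk-lem}. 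This contradicts that $G$ is a counterexample.

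If instead $a+b\le k$, form $G'$ from $G$ by contracting the edge $vw$ to a single vertex $u$, precolored $F_{a+b}$, retaining any parallel edges produced (so $G'$ may be a multigraph). Using $F_{a+b}$ is exactly right: the $a+b-1$ fake neighbours of $u$ together with $u$ account for the $a+b$ forced vertices of the merged component, so $(I,F_k)$-colorings of $G'$, of its subgraphs, and of $G'$ with the label at $u$ reduced, correspond to those of $G$ by splitting $u$ back into $v,w$ joined by an edge and redistributing the fake neighbours. Hence $G'$ is $(I,F_k)$-critical and is smaller than $G$, so $\rho_{G'}(V(G'))\le -3$ by the minimality of $G$. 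In $\rho_G(V(G))$ the vertices $v,w$ and the edge $vw$ contribute $C_{F,a}+C_{F,b}-C_E$, while in $\rho_{G'}(V(G'))$ the vertex $u$ contributes $C_{F,a+b}$ and the edge $vw$ is gone; each common neighbour $z$ of $v$ and $w$ contributes $-2C_E$ in both (edges $vz,wz$ versus the double edge $uz$), so those terms cancel, giving
$$\rho_G(V(G))-\rho_{G'}(V(G'))=C_{F,a}+C_{F,b}-C_E-C_{F,a+b}.$$
Checking the admissible range-combinations for $a$, $b$, $a+b$ in Definition~\ref{rho-defn} (both $a,b$ in the second range again cannot occur, as that forces $a+b>k$), this difference is $0$ except when $a,b$ both lie in the first range while $a+b$ lies in the second, in which case it equals $C_E-3k\in\{-1,-2\}$; in every case $\rho_G(V(G))\le\rho_{G'}(V(G'))\le -3$, the desired contradiction.

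The main obstacle is the second case, and specifically the claim that the contracted (multi)graph $G'$ is genuinely $(I,F_k)$-critical: one must check that the coloring correspondence survives the ``every proper subgraph is colorable'' and ``reducing any label by one is colorable'' clauses, which is where the choice $u\in F_{a+b}$ (rather than $F_{a+b-1}$) and the retention of parallel edges become essential. The potential identity across the two ranges of $C_{F,\cdot}$, and the first case, are routine.
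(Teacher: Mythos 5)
Your overall strategy is the paper's own: contract $vw$ into a single vertex precolored $F_{a+b}$, argue that the contracted graph is $(I,F_k)$-critical and smaller, and compare potentials using $C_{F,a}+C_{F,b}-C_E-C_{F,a+b}\le 0$. Your range-by-range verification of that inequality is correct, and your explicit Case 1 (where $a+b\ge k+1$, so $F_{a+b}$ is not even a legal precoloring class) is a careful addition that the paper glosses over.

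The genuine gap is in your treatment of common neighbors of $v$ and $w$. You retain the parallel edges so that $G'$ is a multigraph, and you then invoke the minimality of $G$ to conclude $\rho_{G'}(V(G'))\le -3$. But the induction hypothesis is the Main Theorem, which (like the whole framework here --- criticality, the gap lemmas, the discharging, and explicitly the step ``Since $G$ is simple, $C$ has length at least 3'') is set up only for simple precolored graphs; the paper even remarks that parallel edges were permitted in a \emph{different} paper, signalling that they are not permitted here. So the conclusion $\rho_{G'}(V(G'))\le -3$ is not available for your multigraph $G'$, and the step you yourself flag as ``essential'' is exactly the step that exits the class of graphs the induction covers. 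You correctly sensed that simply merging the parallel edges would break the coloring correspondence (a simple edge $uz$ would let $z$ be colored $F$, whereas in $G$ a common neighbor $z$ of two adjacent $F$-vertices is forced into $I$ to avoid an $F$-cycle), but the right repair is the paper's: for each common neighbor $x$, delete both edges $vx$ and $wx$ and precolor $x$ with $I$. This is forced in every coloring of $G$, so criticality survives, and the potential computation still goes through because moving $x$ to $I$ while deleting two incident edges changes the potential by at least $-(C_{U,0}-C_I)+2C_E=C_E>0$, so $\rho_G(V(G))\le\rho_{G'}(V(G'))\le -3$ as needed. As written, your proof is complete only when $v$ and $w$ have no common neighbor.
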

\begin{proof}
Suppose, to the contrary, that $v \in F_i$ and $w \in F_j$.  Form $G'$ from $G$ by contracting edge
$vw$ to create a new vertex $v*w \in F_{i+j}$.  Further, for each vertex $x$
incident to both $v$ and $w$, remove edges $vx$ and $wx$ and put $x$ into $I$.
Contracting edge $vw$ decreases potential by $(C_{F,i}+C_{F,j}-C_E)-C_{F,i+j}\le 0$.
Putting a vertex $x$ into $I$ and deleting two incident edges decreases
potential by at most $C_{U,0}-2C_E-C_I=-C_E$; that is, it increases potential by
at least $C_E$.  Since $G'$ is smaller than $G$, we have $\rho_{G'}(V(G'))\le
-3$.  Thus, $\rho_{G}(V(G))\le \rho_{G'}(V(G'))\le -3$.  So $G$ is not a
counterexample.
\end{proof}

\begin{lem}
\label{delta2-lem}
For each $v\in V(G)$, either $d(v)\ge 2$ or $v\in F_j$ with $j\ge \flr{\frac{k+3}2}$.
\end{lem}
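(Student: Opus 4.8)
The plan is to assume $d(v)\le 1$ and deduce, via Lemmas~\ref{noIFk-lem} and~\ref{noFedge-lem} together with one potential computation, that $v\in F_j$ with $j\ge\flr{(k+3)/2}$. By Lemma~\ref{noIFk-lem}, $v\notin I\cup U_k\cup F_k$, so either $v\in U_j$ with $0\le j\le k-1$ or $v\in F_j$ with $1\le j\le k-1$. First I would handle $d(v)=0$: since $G$ is $(I,F_k)$-critical, $G-v$ has a coloring, which extends by coloring an isolated $v\in U_j$ with $F$ (legal, since then $v$'s $F$-component has order $j+1\le k$) or by leaving an isolated $v\in F_j$ colored $F$ (its $F$-component has order $j\le k$); either way $G$ would be colorable, a contradiction. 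So $d(v)=1$; let $u$ be the unique neighbor of $v$. Next I would rule out $v\in U_j$: take any coloring of $G-v$; if $u$ is colored $I$ then color $v$ with $F$ (its $F$-component then has order at most $j+1\le k$), while if $u$ is colored $F$ then color $v$ with $I$ (legal since $v\in U$); in both cases $G$ is colorable, a contradiction. Hence $v\in F_j$, and it remains to show $j\ge\flr{(k+3)/2}$, equivalently to rule out $1\le j\le\flr{(k+1)/2}$.

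So suppose $1\le j\le\flr{(k+1)/2}$. By Lemma~\ref{noFedge-lem} (and Lemma~\ref{noUk-lem}), $u\in U_i$ for some $0\le i\le k-1$. I would form $G'$ from $G$ by deleting $v$ and transferring $v$'s pendant $F$-component onto $u$: if $i+j\le k-1$, move $u$ from $U_i$ to $U_{i+j}$; if $i+j\ge k$, recolor $u$ with $I$ instead (as in the comment after Lemma~\ref{noIFk-lem}, since then any coloring of $G$ with $u$ colored $F$ would put $u$ in an $F$-component of order at least $1+i+j>k$). The crucial point is that $G'$ is $(I,F_k)$-critical: a coloring of $G'$ with $u$ colored $F$ extends to a coloring of $G$ with both $u$ and $v$ colored $F$, and conversely, because in either graph the $F$-component through $u$ contains exactly the same $j$ additional vertices dangling off $u$ --- namely $v$ and its $j-1$ fake neighbors in $G$, versus $j$ of the fake neighbors of $u$ in $G'$; likewise a coloring of $G'$ with $u$ colored $I$ corresponds to a coloring of $G$ with $u$ colored $I$ and $v$ colored $F$. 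Applying this correspondence to the proper subgraphs of $G'$ (each of which corresponds to a proper subgraph of $G$, hence is colorable since $G$ is critical) and to the index-reduction clause would establish criticality. Since $|V(G')|<|V(G)|$, minimality of $G$ gives $\rho_{G'}(V(G'))\le-3$.

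Finally I would compute $\rho_G(V(G))$ in terms of $\rho_{G'}(V(G'))$. When $i+j\le k-1$, using $C_{U,i}-C_{U,i+j}=3j$ and $C_{F,j}=C_E-3j$ (the latter valid precisely because $j\le\flr{(k+1)/2}$),
$$\rho_G(V(G))=\rho_{G'}(V(G'))+C_{F,j}-C_E+\big(C_{U,i}-C_{U,i+j}\big)=\rho_{G'}(V(G')).$$
When $i+j\ge k$, using instead $C_{U,i}-C_I=C_E-3i$,
$$\rho_G(V(G))=\rho_{G'}(V(G'))+C_{F,j}-C_E+\big(C_{U,i}-C_I\big)=\rho_{G'}(V(G'))+C_E-3(i+j)\le\rho_{G'}(V(G'))+C_E-3k<\rho_{G'}(V(G')).$$
In either case $\rho_G(V(G))\le-3$, contradicting that $G$ is a counterexample to the Main Theorem. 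I expect the main obstacle to be the bookkeeping in verifying that $G'$ is $(I,F_k)$-critical --- making the simulation correspondence precise for every proper subgraph and for the index-reduction clause, and checking the degenerate cases $j=1$ with $i=0$ (where bumping $u$ and then reducing its index simply reproduces $G-v$) and $i+j=k$ --- but this is of the same nature as the verifications already carried out in the proofs of Lemmas~\ref{noIFk-lem} and~\ref{noFedge-lem}, while the displayed computations are one-line manipulations of the coefficients from Definition~\ref{coeff-defn}.
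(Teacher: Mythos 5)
Your proposal is correct and follows essentially the same route as the paper: rule out $v\in U_j$ by coloring $G-v$ and giving $v$ the color unused on its neighbor, then for $v\in F_j$ with $j\le\flr{(k+1)/2}$ delete $v$ and add $j$ $F$-neighbors to its neighbor $u\in U_i$ (recoloring $u$ with $I$ if $i+j\ge k$), and check that the potential does not increase. The only cosmetic difference is that the paper dispatches $d(v)=0$ by noting a critical graph is connected, whereas you color it directly; both are fine.
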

\begin{proof}
Assume, to the contrary, that $d(v)\le 1$ and $v\notin F_j$ with $j\ge
\flr{\frac{k+3}2}$.  Since $G$ is critical, it is connected, so $d(v)=1$; denote
the unique neighbor of $v$ by $w$.  If $v$ is uncolored, then color $G-v$ by
the minimality of $G$.  Now extend this coloring to $G$ by coloring $v$ with
the color not used on $w$.  So assume, by Lemma~\ref{noIFk-lem}, that $v$ is
precolored $F_j$ for some $j\in \{1,\ldots,\flr{\frac{k+1}2}\}$.  
Lemma~\ref{noFedge-lem} implies that $w\in U_{\ell}$ for some $\ell$.
Form $G'$ from $G-v$ by increasing the number of $F$-neighbors of $w$ by $j$.
Note that $\rho_G(V(G))-\rho_{G'}(V(G'))\le C_{U,\ell}+C_{F,j}-C_E-C_{U,\ell+j}=0$.
(If the new total number of $F$-neighbors of $w$ is at least $k$,
then we color $w$ with $I$.)
  For each $R'\subseteq V(G')$, subgraph $G'[R']$
has an $(I,F_k)$-coloring if and only $G[R'\cup\{v\}]$ does.  Since $G$ is
$(I,F_k)$-critical, so is $G'$.  Since $G'$ is smaller than $G$, by the
minimality of $G$, we have $\rho_{G'}(V(G'))\le -3$.  However, now
$\rho_G(V(G))\le\rho_{G'}(V(G')\le-3$.  Thus, $G$ is not a counterexample.
\end{proof}

Recall, from Section~\ref{potential-sec}, that the heart of any proof using the
potential method is its gap lemmas.  Our next definition plays a crucial role in
the first of these.

\begin{defn}
\label{G'-defn}
Given $R\subsetneq V(G)$ and an $(I,F_k)$-coloring $\vph$ of $G[R]$, we
construct $G':=H(G,R,\vph)$\aside{$G'$, $H(G,R,\vph)$} as follows; see Figure~\ref{G'-fig}.  
Let $\overline{R}:=V(G)\setminus R$.  Let $\nabla(R)\aside{$\overline{R}, \nabla(R)$}:=\{v\in R: \exists w\in
\overline{R}, vw\in E(G)\}$.
To form $G'$ from $G$, delete $R$ and add two new vertices $v_F,v_I$,
where $v_F$ is precolored $F_k$ and $v_I$ is precolored $I$.
(So $G'[\overline{R}]\cong G[\overline{R}]$.)
For each $vw\in E(G)$ with $w\in \overline{R}$, $v\in R$ and
$\vph(v)=F$, add to $G'$ the edge $wv_F$.  
For each $vw\in E(G)$ with $w\in \overline{R}$, $v\in R$ and
$\vph(v)=I$, add to $G'$ the edge $wv_I$.  
Finally, delete $v_F$ or $v_I$ if it has no incident edges.
So $V(G')\subseteq \overline{R}\cup\{v_F,v_I\}$.
In each case, let $X:=V(G')\setminus \overline{R}$.
\end{defn}

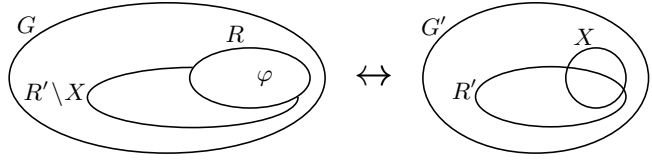
\begin{figure}[!h]
\centering
\begin{tikzpicture}[semithick]
\tikzstyle{uStyle}=[shape = circle, minimum size = 3pt, inner sep = 1pt,
outer sep = 0pt, fill=white, semithick, draw]
\tikzstyle{lStyle}=[shape = circle, minimum size = 5pt, inner sep =
0.5pt, outer sep = 0pt, font=\footnotesize,draw=none,fill=none]
\tikzstyle{vlStyle}=[shape = circle, minimum size = 4pt, inner sep =
1.0pt, outer sep = 0pt, draw, fill=white, semithick, font=\footnotesize]
\tikzset{every node/.style=uStyle}
\begin{scope}[xshift=-3cm]
\draw (-1.97,.7) node[lStyle] {$G$};
\draw (-.1,0) ellipse (2.1cm and 1cm);
\draw (0.8,.58) node[lStyle] {$R$};
\draw (1.0,0) ellipse (.8cm and .4cm); 
\draw (1.2,0) node[lStyle] {$\vph$};
\draw (.24,.14) arc(90:361:1.4cm and .4cm);
\draw (-1.59,-.21) node[lStyle] {$R'\!\setminus\! X$};
\end{scope}
\begin{scope}[xshift=1.8cm]
\draw (-2.15,0) node[lStyle] {\Large{$\rightarrow$}};
\draw (-1.35,.7) node[lStyle] {$G'$};
\draw (0,0) ellipse (1.5cm and 1cm);
\draw (.8,0) ellipse (.4cm and .4cm);
\draw (.65,.55) node[lStyle] {$X$};

\draw (-.945,-.15) node[lStyle] {$R'$};
\draw (.2,-.25) ellipse (1.0cm and .4cm);
\draw (-2.15,0) node[lStyle] {\Large{$\leftrightarrow$}};
\end{scope}

\end{tikzpicture}
\caption{The construction of $G'$ from $G$, $R$, and $\vph$ in
Definition~\ref{G'-defn}, and the vertex subset $R'$ of a critical subgraph of
$G'$ in the proof of the Weak Gap Lemma.  The picture is nearly identical for
the proof of the Strong Gap Lemma.\label{G'-fig}}
\end{figure}
\newpage

\begin{lem}[Weak Gap Lemma]
If $R\subsetneq V(G)$ and $|R|\ge 1$, then $\rho(R)\ge 1$.
\label{weak-gap-lem}
\end{lem}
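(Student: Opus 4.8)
The plan is to argue by contradiction: suppose some $R\subsetneq V(G)$ with $|R|\ge 1$ has $\rho(R)\le 0$, and among all such $R$ choose one with $|R|$ as large as possible. First I dispose of the small cases using the structural lemmas already in hand. By Lemma~\ref{noIFk-lem} we have $I=U_k=F_k=\emptyset$, so for a singleton the value $\rho(\{v\})$ is one of $C_{U,0},\dots,C_{U,k-1},C_{F,1},\dots,C_{F,k-1}$, and a short inspection of Definition~\ref{coeff-defn} shows every one of these coefficients is at least $1$; hence $|R|\ge 2$. Moreover, if $|R|=2$ and $G[R]$ has no edge, then $\rho(R)=\rho(\{v\})+\rho(\{w\})\ge 2$; so we may assume that if $|R|=2$ then $G[R]$ contains an edge.

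Since $G$ is $(I,F_k)$-critical and $G[R]$ is a proper subgraph, there is an $(I,F_k)$-coloring $\vph$ of $G[R]$. Form $G':=H(G,R,\vph)$ as in Definition~\ref{G'-defn}, with $X:=V(G')\setminus\overline R\subseteq\{v_F,v_I\}$. Two observations drive the argument. First, $G'$ is smaller than $G$: if $|R|\ge 3$ then $|V(G')|\le|V(G)|-|R|+2<|V(G)|$; and if $|R|=2$ then $|V(G')|\le|V(G)|$ while the edge of $G[R]$ is destroyed and each vertex of $\overline R$ is joined to $X$ by at most as many edges as it had to $R$, so $|E(G')|<|E(G)|$. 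Second, $G'$ has no $(I,F_k)$-coloring: given one, say $\psi$, the coloring of $G$ agreeing with $\vph$ on $R$ and $\psi$ on $\overline R$ is valid, because the precolorings $v_F\in F_k$ and $v_I\in I$ force every vertex of $\overline R$ adjacent in $G$ to an $F$-colored (resp. $I$-colored) vertex of $R$ to be non-$F$ (resp. non-$I$) under $\psi$; hence no $F$-component grows across the cut and $I$ stays independent. But $G$ has no coloring, being critical. (Since $G$ is connected, $X\ne\emptyset$.)

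Because $G'$ has no coloring, it contains an $(I,F_k)$-critical subgraph $G^*$, which is smaller than $G$ (being contained in the smaller graph $G'$), so $\rho_{G^*}(V(G^*))\le -3$ by minimality of $G$. Write $R':=V(G^*)$, $A:=R'\cap\overline R$, $B:=R'\cap X$, and $\widehat R:=A\cup R\subseteq V(G)$. Splitting $\rho$ along the partitions $V(G^*)=A\cup B$ and $\widehat R=A\cup R$ and subtracting yields
$$\rho_G(\widehat R)\;\le\;\rho_{G^*}(V(G^*))+\rho_G(R)-\rho_{G^*}(B)+C_E\bigl(|E_{G^*}(A,B)|-|E_G(A,R)|\bigr)\;\le\;\rho_{G^*}(V(G^*))\;\le\;-3,$$
where the middle step uses $\rho_{G^*}(A)\ge\rho_G(A)$ (passing to a critical subgraph only deletes edges and lowers precoloring indices, both of which raise $\rho$), $\rho_{G^*}(B)\ge 0$ (since $C_{F,k}=0<C_I$ and $B$ spans no edge), that each edge of $G^*$ from $A$ to $B$ is witnessed by a distinct edge of $G$ from $A$ to $R$, and $\rho_G(R)\le 0$. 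Finally, since $G^*$ is critical it has no coloring, so $V(G^*)\not\subseteq X$ (any subgraph on a subset of $\{v_F,v_I\}$ is colorable); thus $A\ne\emptyset$ and $|\widehat R|>|R|$. If $\widehat R=V(G)$, then $\rho_G(V(G))\le -3$, contradicting $\rho_G(V(G))\ge -2$; and if $\widehat R\subsetneq V(G)$, then $\widehat R$ violates the maximality of $|R|$.

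I expect the main obstacle to be the bookkeeping behind the displayed inequality — keeping straight that the precoloring of $G^*$ differs from that of $G$ (indices may be reduced and edges deleted when passing to a critical subgraph) and verifying $|E_{G^*}(A,B)|\le|E_G(A,R)|$, where one must observe that a single vertex of $\overline R$ may be joined to both $v_F$ and $v_I$ but those two new edges are witnessed by two genuinely different edges of $G$ (one to an $F$-colored and one to an $I$-colored vertex of $R$).
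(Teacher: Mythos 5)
Your proof is correct and follows essentially the same route as the paper's: both color $G[R]$ by criticality, pass to $G'=H(G,R,\vph)$, extract a critical subgraph, and use the submodularity estimate to push the deficit of $-3$ back into $G$. The only (harmless) difference is the extremal choice — you maximize $|R|$ where the paper minimizes $\rho(R)$ — which obliges you to verify $A\ne\emptyset$, a step the paper's version does not need.
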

\begin{proof}
Suppose, to the contrary, that there exists such an $R$ with $\rho(R)\le 0$.
Choose $R$ to minimize $\rho(R)$.  
By Lemma~\ref{noFk-lem}, $F_k=\emptyset$.  So each vertex has positive
potential.  Thus, $|R|\ge 2$ and $R$ induces at least one edge.
Since $G$ is critical, $G[R]$ has an $(I,F_k)$-coloring $\vph$.  Let
$G':=H(G,R,\vph)$.  If $G'$ has an $(I,F_k)$-coloring $\vph'$, then the union
of $\vph$ and $\vph'$ is an $(I,F_k)$-coloring of $G$ (since each edge from $R$
to $\overline{R}$ has endpoints with opposite colors).
So $G'$ has a critical subgraph $G''$; let $R':=V(G'')$ (it is possible that
some vertices in $R'$ have fewer $F$-neighbors in $G''$ than in $G'$).
Note that $|V(G')|\le |V(G)|$ and $|E(G')|<|E(G)|$; thus, $G'$ is smaller than
$G$.  As a result, $G''$ is smaller than $G$.  Thus, $\rho_{G'}(R')\le
\rho_{G''}(R')\le -3$.  Since $G'[X]$ is edgeless, $\rho_{G'}(X')\ge 0$ for
every $X'\subseteq X$.  Now
\begin{align}
\rho_G((R'\setminus X)\cup R) & \le \rho_{G'}(R')-\rho_{G'}(R'\cap
X)+\rho_G(R)
\nonumber \\
& \le -3 + \rho_G(R) \\
& < \rho_G(R). \nonumber
\end{align}
Since $\rho_G((R'\setminus X)\cup R)<\rho_G(R)$ and we chose $R$ to minimize
$\rho_G(R)$, this implies that $(R'\setminus
X)\cup R=V(G)$.  But now $\rho(V(G))\le -3$, so $G$ is not a counterexample.
\end{proof}

The Strong Gap Lemma, which we prove next, is one of the most important lemmas
in the paper.  Very roughly, the proof mirrors that of the Weak Gap Lemma, but
it is much more nuanced, which allows us to prove a far stronger lower bound (one
that grows linearly with $k$).

\begin{lem}[Strong Gap Lemma]
If $R\subsetneq V(G)$ and $G[R]$ contains an edge, 
then $\rho(R) \geq \frac{C_E - 3}2$.
\end{lem}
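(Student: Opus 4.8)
The plan is to mirror the proof of the Weak Gap Lemma, but to account for the two simulated vertices $v_F$ and $v_I$ far more carefully. Since $\frac{C_E-3}2=C_I$, the statement to prove is $\rho(R)\ge C_I$. Suppose not, and among all $R\subsetneq V(G)$ for which $G[R]$ contains an edge and $\rho(R)<C_I$, choose one minimizing $\rho(R)$ (I expect also to need a secondary extremal condition on the coloring; see the last paragraph). Since $G$ is critical and $R\subsetneq V(G)$, the subgraph $G[R]$ has a coloring $\vph$; form $G':=H(G,R,\vph)$ as in Definition~\ref{G'-defn}, with the simulated vertices $v_F$ (precolored $F_k$, coefficient $C_{F,k}=0$) and $v_I$ (precolored $I$, coefficient $C_I>0$), and set $X:=V(G')\setminus\overline R\subseteq\{v_F,v_I\}$. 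Exactly as in the Weak Gap Lemma, if $G'$ had a coloring we could glue it to $\vph$ across the cut $(R,\overline R)$ — the redirection of cut edges to $v_F$ or $v_I$ is exactly what prevents a forbidden monochromatic $I$-edge or an oversized $F$-component — so $G'$ has no coloring and hence contains a critical subgraph $G''$; put $R':=V(G'')$. Since $|V(G')|\le|V(G)|$ and $|E(G')|<|E(G)|$, the graph $G'$ (hence $G''$) is smaller than $G$, so by minimality of $G$ and the Main Theorem, $\rho_{G'}(R')\le\rho_{G''}(R')\le-3$.

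Next I would carry over the potential bookkeeping from the Weak Gap Lemma. Writing $A:=R'\setminus X\subseteq\overline R$ and $B:=R'\cap X$, the facts that $G'[X]$ is edgeless and that each $G'$-edge between $A$ and $B$ can be charged injectively to a $G$-edge between $A$ and $R$ give
\[
\rho_G(A\cup R)\ \le\ \rho_{G'}(R')-\rho_{G'}(B)+\rho_G(R)\ \le\ -3-\rho_{G'}(B)+\rho_G(R),
\]
where $\rho_{G'}(B)=C_I$ if $v_I\in B$ and $\rho_{G'}(B)=0$ otherwise (the vertex $v_F$ always contributes $0$). The key case split is on whether $v_I\in R'$. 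If $v_I\in R'$, then $\rho_G(A\cup R)\le-3-C_I+\rho_G(R)<-3$; since $A\cup R\supseteq R$ it is nonempty and induces an edge, so if it is a proper subset of $V(G)$ this contradicts the Weak Gap Lemma, and if $A\cup R=V(G)$ it contradicts that $G$ is a counterexample. Hence $v_I\notin R'$, so $\rho_G(A\cup R)\le-3+\rho_G(R)<\rho_G(R)<C_I$; if $A\cup R$ were a proper subset of $V(G)$ it would induce an edge and have potential strictly below both $\rho_G(R)$ and $C_I$, contradicting the choice of $R$. Therefore $A\cup R=V(G)$, i.e.\ $\overline R\subseteq R'\subseteq\overline R\cup\{v_F\}$. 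If $R'=\overline R$, then $G''\cong G[\overline R]$ is a proper subgraph of $G$, so it has a coloring, contradicting that $G''$ is critical. This leaves exactly one case: $R'=\overline R\cup\{v_F\}$.

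This remaining case — where the critical subgraph $G''$ of $G'$ consists of $\overline R$ together with the simulated $F_k$-vertex $v_F$ (but not $v_I$) — is the heart of the lemma, and the only place the bound $\rho(R)\ge C_I$ is genuinely used rather than just $\rho(R)\ge1$. Here $v_F$ has some positive degree $D$ in $G'$ (if $D=0$ it would have been deleted in forming $G'$), so $\rho_{G'}(R')=\rho_G(\overline R)-C_E D\le-3$, giving $\rho_G(\overline R)\le -3+C_E D$; combining with $\rho_G(V(G))=\rho_G(\overline R)+\rho_G(R)-C_E\,|E_G(R,\overline R)|$ and $|E_G(R,\overline R)|\ge D$ yields only $\rho_G(V(G))\le-3+\rho_G(R)$, which is not yet a contradiction. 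The plan for closing the gap is to exploit the criticality of $G''$: reducing the precolor of $v_F$ (from its precolor $F_{k'}$ in $G''$ to $F_{k'-1}$) produces a coloring $\psi$ of $G''$, and since $v_F$ at the reduced precolor cannot simultaneously be satisfiable while it is not at full precolor, the $F$-components of $G[\overline R]$ meeting $N(v_F)$ have a tightly controlled total size in $\psi$. I would then merge $\psi|_{\overline R}$ with $\vph$ across the cut — using that controlled size bound, the inequality $|E_G(R,\overline R)|\ge D$, and the Weak Gap Lemma applied separately to $\overline R$ and to $R$ — to either produce an honest coloring of $G$ or sharpen the bound on $\rho_G(\overline R)$ enough to force $\rho_G(V(G))\le-3$.

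The main obstacle I anticipate is precisely this last case. Getting the final bookkeeping to close with the exact constant $C_I=\frac{C_E-3}2$ to spare will almost certainly require choosing the coloring $\vph$ extremally — for instance, so as to minimize the number of vertices of $\overline R$ having an $F$-neighbor in $R$, or to use color $I$ on as many vertices of $\nabla(R)$ as possible — and then combining that extremality with the component-size information extracted from the criticality of $G''$. Everything before this step is a faithful, if careful, repetition of the Weak Gap Lemma argument.
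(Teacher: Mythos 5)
Your setup faithfully reproduces the Weak Gap Lemma machinery and correctly isolates the hard case ($R'=\overline R\cup\{v_F\}$ with $v_F$ the sole surviving boundary vertex), but the mechanism you propose for that case is not the right one, and the gap there is genuine. The criticality of $G''$ only constrains colorings of $G[\overline R]$: reducing $v_F$ from $F_k$ to $F_{k-1}$ yields a coloring $\psi$ of $G''$ in which at most one real neighbor of $v_F$ is colored $F$ and forms a singleton $F$-component of $G[\overline R]$. But when you merge $\psi|_{\overline R}$ with $\vph$, that neighbor's $F$-neighbor in $R$ may sit (under $\vph$) in an $F$-component of $G[R]$ of order exactly $k$, producing a component of order $k+1$; moreover, since $v_I\notin R'$, the coloring $\psi$ carries no information about the edges into $v_I$, so the merge can also create an $I$--$I$ edge across the cut. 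What you actually need is control over $\vph$ on the $R$ side: a guarantee that the $F$-components of $\vph$ meeting $\nabla(R)$ have total slack $\sum_j(k-|F^j|)$ at least $t:=\lfloor(\rho(R)+2)/3\rfloor$. An ``extremal choice of $\vph$'' cannot deliver this by itself, because nothing prevents every coloring of $G[R]$ from putting each boundary vertex into an $F$-component of order exactly $k$.

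The missing idea is the one the paper calls the heart of the potential method: \emph{modify $G[R]$ before coloring it}. One forms $G_R$ from $G[R]$ by adding a total of $t$ fake $F$-neighbors to the vertices of $\nabla(R)$; the minimality of $\rho(R)$ is exactly what guarantees $\rho_{G_R}(R')\ge\rho_G(R')-3t\ge-2$ for all $R'\subseteq R$, so $G_R$ is still colorable, and any coloring of $G_R$, read as a coloring of $G[R]$, has the required slack $t$ at the boundary. One then replaces the single vertex $v_F\in F_k$ (potential $0$) by one vertex $v_{F,j}\in F_{k-\ell'_j}$ per boundary $F$-component, each carrying positive potential roughly $3\ell'_j$, so that $\rho_{G'}(X)\ge 3t\ge\rho_G(R)$ and the submodularity inequality closes to $\rho_G(V(G))\le-3$. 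Two further points your sketch omits: the distribution of the $t$ fake neighbors among $\nabla(R)$ is only possible when $\rho(\nabla(R))\ge\rho(R)$, so the case $\rho(\nabla(R))<\rho(R)$ must be handled separately (there $\nabla(R)$ is an independent separating set, all colored $F$, and the graph decomposes across it); and the identity $\rho_{G'}(\{v_{F,j}\})=3\ell'_j$ fails for large $\ell'_j$, which forces a short parity analysis at the end. None of this is recoverable from the single-$v_F$ construction you use, so the proposal as written does not close.
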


Before proving the lemma formally, we give a proof sketch.
Choose \Emph{$R$} to minimize $\rho(R)$ among $R\subsetneq V(G)$ such that $G[R]$
contains an edge.
For the sake of contradiction, assume that $\rho(R) < \frac{C_E-3}2$; by
integrality, $\rho(R)\leq \frac{C_E-5}2$.
Let $t := \left\lfloor \frac{\rho(R) +2}3 \right\rfloor$\aside{$t$}.
Again, by integrality, $3t \geq \rho(R)$.
By the Weak Gap Lemma, $t \geq 1$.

We essentially repeat the proof of the Weak Gap Lemma, but more carefully.
In that proof it was crucial that $\rho_{G'}(V(G')\setminus\overline{R})\ge
\rho_G(R)$.  To ensure this now, we will show that $\rho_{G'}(V(G')\setminus
\overline{R})\ge \frac{C_E-5}2$.
To do this, before using induction to get an
$(I,F_k)$-coloring $\vph$ of $G[R]$, we modify $G[R]$ slightly, to get a graph
$G_R$.  Denote $\nabla(R)$ by $v_1, \ldots, v_s$.  
We must ensure that in the coloring $\vph$ of $G[R]$ the components colored $F$ 
containing $v_1,\ldots,v_s$ do not each contain $k$ vertices.  Specifically, if
$F^1,...,F^m$ are the $F$-components of $\vph$ containing vertices 
$v_1,\ldots,v_s$, then we want to maximize $\sum_{j=1}^m(k-|F^j|)$.  When
constructing $G'$, this will allow us to create vertices $v_j$ that are
precolored $F_{|F^j|}$, rather than $F_k$.  When
$j\le \lfloor\frac{k-2}2\rfloor$, recall that $C_{F,k-j}=3j$.  Thus, to ensure that
$\rho(X)\ge \rho(R)$, it suffices to have $\sum_{j=1}^m(k-|F^j|)\ge t$,
since then $\rho(X)\ge \sum_{j=1}^m3(k-|F^j|)\ge 3t \ge \rho(R)$, as desired. 

We construct $G_R$ from $G[R]$ by adding ``fake'' neighbors precolored $F$ to
vertices in $\nabla(R)$; in total, we must add at least $t$ such fake
$F$-neighbors.  More formally, we move vertices from $F_{a_j}$ to $F_{b_j}$
where $\sum b_j = t+\sum a_j$.
The reason that we can color the resulting graph $G_R$ is that we chose $R$ to
minimize $\rho(R)$.  In particular, $\rho_G(Y)\ge \rho_G(R)$ for all $Y\subseteq
R$ (that induces at least one edge).  Thus, $\rho_{G_R}(Y)\ge \rho_G(Y)-3t\ge
\rho_G(R)-3\lfloor\frac{\rho_G(R)+2}3\rfloor\ge-2$.  Thus, $Y$ cannot induce a
critical graph in $G_R$ or some subgraph of it; so, $G_R$ is colorable.
Making all this precise requires more details, which we give below in Case 2.

\begin{proof}
We exactly repeat the first paragraph above; in particular, we define $R$ and
$t$ as above.  Before proceeding to the main case, we handle the easy case that
$\rho(\nabla(R)) < \rho(R)$.

\textbf{Case 1: $\boldsymbol{\rho(\nabla(R)) < \rho(R)}$.}
By our choice of $R$, we know that $G[\nabla(R)]$ is edgeless; also $R\setminus
\nabla(R)\ne\emptyset$.  That is, $\nabla(R)$ is an independent separating set.
Moreover, each vertex of $\nabla(R)$ is colored $F$, since
$\min\{C_{U,k-1},C_I\} \ge \min\{\frac{3k-3}2,\frac{C_E-3}2\}> \frac{C_E-5}2\ge
\rho(R)$. 
Form $\tilde{G}$ from $G$ by moving each vertex of $\nabla(R)$ into $F_k$.
For each $S\subseteq V(G)$ such that $G[S]$ contains an edge, we have
$\rho_{\tilde{G}}(S)\ge \rho_G(S) - \rho_G(\nabla(R)) > \rho_G(S)-\rho_G(R)\ge
0$.  Furthermore, $\rho_{\tilde{G}}(S)\ge 0$ for each $S\subseteq V(G)$ such
that $G[S]$ is edgeless, since each vertex has nonnegative potential.
Thus, every proper induced subgraph of $\tilde{G}$ has an $(I,F_k)$-coloring.
Denote the components of $G - \nabla(R)$ by $C^1, C^2, \ldots, C^r$.
For each $j$, by induction we have an $(I,F_k)$-coloring of $\tilde{G}[C^j
\cup \nabla(R)]$.  The union of these colorings is a coloring of $G$, which
contradicts that $G$ is a counterexample.

\textbf{Case 2: $\boldsymbol{\rho(\nabla(R)) \geq \rho(R)}$}.
Now we show how to form $G_R$ from $G[R]$ so that our $(I,F_k)$-coloring $\vph$
of $G_R$ ensures $\rho_{G'}(V(G')\setminus \overline{R})\ge \rho_G(R)$. 
Denote $\nabla(R)$ by \Emph{$v_1,\ldots,v_s$}.
First suppose that some $v_{\ell}$ is uncolored; say $v_{\ell}\in
U_{p_{\ell}}$.  To form $G_R$ from $G[R]$, we move $v_{\ell}$ to
$U_{p_{\ell}+t}$; if $p_{\ell}+t>k-1$, then we instead move $v_{\ell}$ to $I$. 
(We leave all other vertices in $\nabla(R)$ unchanged.)
Now assume that each $v_j\in \nabla(R)$ is colored $F$.  Say $v_j\in F_{p_j}$
for each $v_j\in \nabla(R)$.
We pick nonnegative integers $\ell_j$ iteratively as follows.  Let $\ell_j :=
\min\{k-p_j, t - \sum_{j''<j} \ell_{j''}\}$.
Note that $\rho(\{v_j\})\le 3(k-p_j)$ for all $j$.  So, if $\sum \ell_j\le t-1$,
then $\rho(\nabla(R))\le 3(t-1)<\rho(R)$; this contradicts the case we are in.
Thus, $\sum \ell_j=t$ (also, $\ell_j \geq 0$ for all $j$).
Form $G_R$ from $G[R]$ by moving each $v_j$ into $F_{p_j+\ell_j}$.

We claim $G_R$ has an $(I,F_k)$-coloring.  Since $G_R$ is smaller than $G$,
this will hold by induction once we show that $\rho_{G_R}(R') \geq -2$ for each $R' \subseteq R$.
Assume, to the contrary, that $\rho_{G_R}(R') \leq -3$, for some $R'$. Now 
$$\rho_G(R') \le \rho_{G_R}(R')+3t \leq -3 + 3t = 3\left(\left\lfloor \frac{\rho_G(R)
+2}3 \right\rfloor - 1\right) < \rho_G(R).$$
By our choice of $R$, this implies that $R'$ is edgeless.  But this contradicts
$\rho_{G_R}(R')\le -3$, since each vertex contributes nonnegative potential.
Thus, $G_R$ has the desired $(I,F_k)$-coloring $\vph$.

We construct $G'$ from $G$, $R$, and $\vph$ as follows.
As described above, $G'$ contains $G[\overline{R}]$, to which we add new
vertices that we call $X$.
Let $F^1, F^2, \ldots, F^m$ denote the components of $F$ in $\vph$ that contain
at least one vertex of $\nabla(R)$.  For each $F^j$, let $(k-\ell_j')$ be the
number of vertices in $F^j$ when $\vph$ is viewed as a coloring of $G[R]$ (not
$G_R$); when constructing $G'$, add to $X$ a vertex $v_{F,j} \in F_{k-\ell_j'}$. 
If $\vph$ uses $I$ on one or more vertices in $\nabla(R)$, then add to $G'$ a
single vertex $v_I \in I$.

Next, we must show that $\rho_{G'}(X) \geq \rho_G(R)$.
Recall that $X$ denotes the vertices in $G'$ that are not in $G$.
By construction, $G'[X]$ is edgeless, so $\rho_{G'}(X) = \sum_{v_j\in
X}\rho_{G'}(v_j)$.
If $v_I\in X$, then $\rho_{G'}(X)\ge \rho_{G'}(\{v_I\}) = C_I =
\frac{C_E-3}2 > \rho_G(R)$, so we are done.  Thus, we assume that $v_I\notin
X$.  Essentially, we want to show that each $v_j\in X\cap F_{k-\ell'_j}$ adds
$3\ell'_j$ to $\rho_{G'}(X)$.  Since $\sum \ell'_j\ge t$, we get $\rho_{G'}(X) =
\sum\rho_{G'}(\{v_j\}) = \sum 3\ell'_j \ge 3t \ge \rho_G(R)$.  But there is a
small complication.  

We only have $\rho_{G'}(\{v_j\}) = 3\ell'_j$ when $\ell'_j\le \lceil \frac{k-3}2
\rceil$; otherwise $\rho_{G'}(\{v_j\})=C_E-3(k-\ell'_j)$, which is $3\ell'_j-1$
when $k$ is even and $3\ell'_j-2$ when $k$ is odd.  If $\ell'_j\ge \lceil
\frac{k-1}2 \rceil$ for at least two values of $j$, then $\rho_{G'}(X)\ge
2(C_E-3(k-\lceil\frac{k-1}2\rceil)) \ge \frac{C_E-5}2\ge \rho_G(R)$, as desired.
So assume that $\ell'_j\ge \lceil\frac{k-1}2\rceil$ for at most one value of $j$.
If $k$ is even, then $\rho_G(R)\le \frac{C_E-5}2 = \frac{3k-6}2$, so
$t = \lfloor \frac{3k-2}6 \rfloor = \lfloor\frac{k-2}2 \rfloor$.  Thus, either 
$\ell'_j\le \lceil\frac{k-1}2\rceil$ for each $j$, or $\sum \ell'_j>t$.  In both
cases, $\rho_{G'}(X)\ge \rho_G(R)$.
Assume instead that $k$ is odd.  If $\rho_G(R)<\frac{C_E-5}2$, then $t\le
\lfloor \frac{k-2}2\rfloor$, and the analysis is similar to that above for $k$
even.  So we instead assume that $\rho_G(R)=\frac{C_E-5}2$ and
$\ell'_{i}=\frac{k-1}2=t$ for some $i$
(with $\ell'_j=0$ for all other $j$).
But in this case, $\rho_{G'}(X)=3t-2$ and $\rho_G(R)=\frac{3k-7}2 =
\frac{3k-3}2-2 = 3t-2$.  So, again $\rho_{G'}(X)\ge \rho_G(R)$, as desired.

The graph $G'$ is smaller than $G$, since by construction $|V(G')| \leq |V(G)|$
(equality may be possible if $G[R] \cong K_{1,s-1}$) and $|E(G')| < |E(G)|$,
since $G[R]$ contains an edge.
Each vertex $v \in \overline{R}$ has at most one neighbor in $R$ 
since otherwise 
$$\rho(R \cup \{v\}) \leq \rho(R) + C_{U,0} - 2C_E \leq \rho(R) -
\frac{C_E + 3}2 \le \frac{C_E-5}2-\frac{C_E+3}2 =-4.$$
If $R\cup \{v\}=V(G)$, then $\rho(V(G))\le -4$, which contradicts that $G$ is a
counterexample.  Otherwise, $R\cup\{v\}\subsetneq V(G)$ and
$\rho(R\cup\{v\})<\rho(R)$, which contradicts our choice of $R$.
So each $v\in\overline{R}$ has at most one neighbor in $R$.
This means that $G'$ does not have an $(I,F_k)$-coloring, since such a coloring
could be combined with $\vph$ to produce an $(I,F_k)$-coloring of $G$.
So $G'$ contains an $(I,F_k)$-critical subgraph $G''$.
Let $W'' := V(G'')$, and by induction $\rho_{G''}(W'') \leq -3$.

Because $G$ is $(I,F_k)$-critical (and thus does not contain proper
$(I,F_k)$-critical subgraphs) $W'' \cap X \neq \emptyset$.
Since $G'[X]$ is edgeless, $\rho_{G'}(X')\ge 0$ for all $X'\subseteq X$.
Let $W := (W''\setminus X) \cup R$.  By submodularity,
\begin{equation}\label{big gap case 2 eqn}
\rho_G(W) \leq \rho_{G'}(W'') - \rho_{G'}(X \cap W'') + \rho_G(R) \leq
(-3) - (0) + \rho_G(R).
\end{equation}
By our choice of $R$, this implies that $W = V(G)$.
We are then in one of two cases, each of which improves the bound in \eqref{big gap case 2 eqn}.
If $X \subset W''$, then $X\cap W''=X$, so we use the prior result that $\rho_{G'}(X) \geq
\rho_G(R)$
to strengthen \eqref{big gap case 2 eqn} and conclude that
$\rho_G(V(G))=\rho_G(W) \le \rho_{G'}(W'') \leq -3$, which is a contradiction.
So assume that $X \setminus W'' \neq \emptyset$.
Because $W = V(G)$, we have $\overline{R} \subset W''$.
By construction, every vertex in $X$ has a neighbor in $\overline{R}$ in $G'$,
and therefore at least one edge with an endpoint in $R$ and the other endpoint
in $\overline{R}$ was not accounted for in \eqref{big gap case 2 eqn}.
Thus, \eqref{big gap case 2 eqn} improves to $\rho_G(W) \leq \rho_G(R) -3 - C_E
\le -\frac{C_E+11}2 < -3$, which is a contradiction.
This finishes Case 2, which completes the proof.
\end{proof}

It will be convenient to write $U^i_j$\aside{$U^i_j, F^i_j$} for the set of
vertices with degree $i$ in $U_j$; similarly for $F^i_j$.  When we do discharging,
vertices in $U^2_j$ will need lots of charge, particularly when $j$ is small.
This motivates our next lemma.  It says that when $j$ is small enough, such
vertices do not exist.

\begin{lem}
\label{getting rid of very small charge}
If $U_j^2\ne \emptyset$, then $j\ge \frac{C_E-7}6$.
\end{lem}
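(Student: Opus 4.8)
The plan is to forbid a degree-$2$ vertex $v\in U_j$ with $j$ small by a deletion-and-compensation reduction, playing it off against the Strong Gap Lemma. First I would take the contrapositive: suppose $v\in U_j^2$ with $j<\frac{C_E-7}{6}$, so by integrality $6j\le C_E-8$, and let $w_1,w_2$ be the two neighbors of $v$. If $w_1=w_2$ the reduction is immediate: $G-v$ is colorable (it is a proper subgraph of the critical graph $G$), and any coloring of $G-v$ extends to $v$ — color $v$ with $I$ if $w_1\in F$, and with $F$ (its $F$-component then has order $j+1\le k$) if $w_1\in I$ — contradicting that $G$ has no coloring. So $w_1\ne w_2$. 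I would then carry out the main case $w_1,w_2\in U$, say $w_i\in U_{\ell_i}$, and remark that if a neighbor is precolored $F$ (hence never colored $I$) a similar argument applies.

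Next I would form $G'$ from $G-v$ by adding $j+1$ fake $F$-neighbors to each of $w_1,w_2$ (so $w_i$ moves from $U_{\ell_i}$ to $U_{\ell_i+j+1}$, or to $I$ if that index exceeds $k-1$). The key observation is that $G'$ is not colorable: in any coloring of $G'$, read as a coloring of $G-v$, a vertex $w_i$ colored $F$ lies in an $F$-component of order at most $k-(j+1)$ (its order in $G'$ with the $j+1$ new fakes removed); hence we can always extend to $v$ — if both $w_i$ are colored $F$ then $v$ has no $I$-neighbor so we color it $I$; otherwise some $w_i$ is colored $I$, we color $v$ with $F$, and $v$'s component has order at most $j+1+(k-j-1)=k$. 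Extending colors $G$, a contradiction. So $G'$ has an $(I,F_k)$-critical subgraph $G''$; set $R'':=V(G'')$. Since $G'$ is smaller than $G$, minimality gives $\rho_{G'}(R'')\le-3$; since $G[R'']$ contains an edge while the Strong Gap Lemma gives $\rho_G(R'')\ge\frac{C_E-3}{2}$, at least one of $w_1,w_2$ lies in $R''$ (otherwise $\rho_G(R'')=\rho_{G'}(R'')\le-3$).

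The decisive case split — which I expect to be the main obstacle — is whether $R''$ contains both of $w_1,w_2$ or exactly one. If it contains both, then undoing each relabeling changes $\rho$ by at most $3(j+1)$, so $\rho_G(R'')\le 6j+3$; putting $S:=R''\cup\{v\}$, which sees both edges at $v$, gives $\rho_G(S)=\rho_G(R'')+C_{U,j}-2C_E\le 3j+3-\frac{C_E+3}{2}$. If $S\subsetneq V(G)$ the Strong Gap Lemma yields $\frac{C_E-3}{2}\le\rho_G(S)$, forcing $j\ge\frac{C_E-3}{3}>\frac{C_E-7}{6}$; if $S=V(G)$ then $-2\le\rho_G(V(G))=\rho_G(S)$ forces $j\ge\frac{C_E-7}{6}$; either way this contradicts $j<\frac{C_E-7}{6}$. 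If instead $R''$ contains only $w_1$ (the case of only $w_2$ being symmetric), then $G''$ never uses the relabeling of $w_2$, so $G''$ is also an $(I,F_k)$-critical subgraph of the graph $G_1'$ obtained from $G-v$ by relabeling only $w_1$; in particular $G_1'$ is not colorable. But a direct computation, using $6j\le C_E-8$, shows $\rho_{G_1'}(R')\ge-2$ for every $R'\subseteq V(G_1')$ — on $V(G_1')$ itself $\rho_{G_1'}(V(G_1'))=\rho_G(V(G))+\frac{C_E-3}{2}\ge\frac{C_E-7}{2}\ge-2$; on a proper subset inducing an edge $\rho_{G_1'}\ge\frac{C_E-3}{2}-3(j+1)\ge-2$ by the Strong Gap Lemma (here $6j\le C_E-8$ suffices); and on an edgeless subset $\rho_{G_1'}\ge 0$ since the relabeled $w_1$ still has nonnegative potential — so $G_1'$ has no critical subgraph and hence is colorable, a contradiction. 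This exhausts the cases, so $U_j^2=\emptyset$ whenever $j<\frac{C_E-7}{6}$.
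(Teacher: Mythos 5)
Your main case ($w_1,w_2\in U$) is sound and is essentially the paper's own argument in dual form: where you extract a critical subgraph $G''$ of $G'$ and split on $|R''\cap\{w_1,w_2\}|$ to force a contradiction, the paper directly verifies $\rho_{G'}(R')\ge -2$ for every $R'$ by the same split on $|R'\cap N(v)|$, colors $G'$ by minimality, and extends to $v$; the arithmetic is the same either way and your inequalities check out. Two small points you should make explicit: the bound ``undoing a relabeling changes $\rho$ by at most $3(j+1)$'' needs the verification that a relabeling $U_{\ell_i}\to I$ (which happens only when $\ell_i\ge k-j-1$) drops the potential by $C_{U,\ell_i}-C_I=C_E-3\ell_i\le 3(j+1)$; and one should note that if $w_1w_2\in E(G)$ and both endpoints are sent to $I$, the resulting edge with two $I$-ends is itself a critical subgraph of potential $-3$, which your ``both in $R''$'' computation then absorbs. (The paper instead rules this configuration out directly with the Weak Gap Lemma.)

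The genuine gap is the sentence ``if a neighbor is precolored $F$ (hence never colored $I$) a similar argument applies.'' It does not, in general. If $w_i\in F_{\ell_i}$ with $\ell_i+j+1\le k$ the argument transfers (the drop $C_{F,\ell_i}-C_{F,\ell_i+j+1}$ is still at most $3(j+1)$, and such a $w_i$ is always colored $F$, so the extension to $v$ is unaffected). But if $\ell_i+j+1>k$ the construction is unavailable: there is no class $F_m$ with $m>k$, and unlike a $U$-vertex an $F$-precolored vertex cannot be rescued by recoloring it $I$. This case is not vacuous --- $\rho(\{v,w_i\})$ can equal $\frac{C_E-3}2$ exactly, so neither gap lemma excludes it. It is precisely the paper's Case 2, which uses a different reduction: if both neighbors are precolored $F$, delete $v$ and later color it $I$; if $w_1\in F_{\ell_1}$ with $j+\ell_1\ge k$ and $w_2\in U_{\ell_2}$, delete $v$ and move $w_2$ to $F_{\ell_2+1}$, with its own potential estimate $2C_E-C_{F,\ell_1}-C_{U,j}-C_{U,\ell_2}+C_{F,\ell_2+1}\ge -3k+3(j+\ell_1)\ge 0$ relying on $j+\ell_1\ge k$. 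Without an argument of this kind your proof does not cover all configurations.
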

\begin{proof}
Assume, to the contrary, that there exists $j\le \frac{C_E-9}6$ and $v\in U_j^2$.
Denote the neighbors of $v$ by $v_1$ and $v_2$.  Our basic plan is to delete $v$
and add $j+1$ $F$-neighbors to each of $v_1$ and $v_2$; call this new graph $G'$. 
We show that $G'$ has an
$(I,F_k)$-coloring $\vph'$, and extend $\vph'$ to $G$ as follows.  If both
$v_1$ and $v_2$ are colored with $F$, then color $v$ with $I$.  Otherwise,
color $v$ with $F$.  It is easy to see this yields an $(I,F_k)$-coloring of
$G$, a contradiction.  Mainly, we need to show that $\rho_{G'}(R')\ge -2$ for
all $R'\subseteq V(G')$, which we do by the Strong Gap Lemma.  This proves
that $G'$ has the desired $(I,F_k)$-coloring.  We also need to handle the
possibility that our construction of $G'$ creates a component of $F$ with more
than $k$ vertices.

\textbf{Case 1: For each $\bs{v_i\in N(v)}$ either $\bs{v_i\in U}$ or else
$\bs{v_i\in F_{\ell_i}}$ and $\bs{\ell_i+j+1\le k}$.}
We follow the outline above, but need to clarify a few details.  If adding $j+1$
$F$-neighbors to some $v_i\in U$ results in $v_i$ having at least $k$
$F$-neighbors, then we instead color $v_i$ with $I$.  
By design, we do not create any vertices
in $U$ with more than $k-1$ $F$-neighbors or vertices in $F$-components of order
more than $k$.
We also need to check that we do not create any edges with both endpoints colored $I$.
By Lemma~\ref{noI-lem}, no vertex of $G$ is colored $I$.  So we only need to
check that we do not use $I$ on both $v_1$ and $v_2$ when $v_1v_2\in E(G)$.
Suppose that we do.  Assume that $v_1\in U_{\ell_1}$ and $v_2\in U_{\ell_2}$.
So $\ell_1+j+1\ge k$ and $\ell_2+j+1\ge k$.
Now $\rho_G(\{v,v_1,v_2\})=C_{U,\ell_1}+C_{U,\ell_2}+C_{U,j}-3C_E =
\frac{9C_E-9}2-3(j+\ell_1+\ell_2)-3C_E = \frac{3C_E-9}2-3(j+\ell_1+1)-3(\ell_2-1)\le
\frac{C_E-9}2-3(\ell_2-1)\le \frac{C_E-9}2-3(k-2-\frac{C_E-9}6)=C_E-3-3k<-3$. 
This contradicts the Weak Gap Lemma.
Thus, $G'$ has a valid precoloring.

Now we must show that $\rho_{G'}(R')\ge -2$ for all $R'\subseteq V(G')$.
If $G[R']$ is edgeless, then clearly $\rho(R')\ge 0$.  So assume $G[R']$ has at
least one edge.  If $R'\cap N(v)=\emptyset$, then $\rho_{G'}(R')=\rho_G(R')\ge 1$,
by the Weak Gap Lemma.  Instead suppose that $|R'\cap N(v)|=1$.
By the Strong Gap Lemma, $\rho_{G'}(R')\ge \rho_G(R')-3(j+1)\ge \frac{C_E-3}2-3(j+1)\ge
\frac{C_E-3}2-3\frac{C_E-3}6=0$.  Finally, suppose that $|R'\cap N(v)|=2$.  Now
the Weak Gap Lemma (and the fact that $\rho_G(V(G))\ge -2$) gives
\begin{align*}
\rho_{G'}(R')&\ge \rho_G(R'\cup\{v\})+2C_E-C_{U,j}-3(j+1)2\\
&= \rho_G(R'\cup\{v\})+2C_E-(\frac{3C_E-3}2-3j)-6(j+1)\\
&= \rho_G(R'\cup\{v\})+\frac{C_E+3}2-3j-6\\
&\ge \rho_G(R'\cup \{v\})+\frac{C_E}2+\frac{3}2-\frac{C_E-9}2-6\\
&= \rho_G(R'\cup \{v\})\\
&\ge -2.
\end{align*}

\textbf{Case 2: There exists $\bs{v_i\in N(v)}$ such that $\bs{v_i\in F_{\ell_i}}$ and
$\bs{j+\ell_i\ge k}$.}  If $v_1$ and $v_2$ are both precolored $F$, then we simply
delete $v$ (since we can extend $\vph'$ to $G$ by coloring $v$ with $I$).
So, we assume that $v_1\in F_{{\ell}_1}$ with $j+{\ell}_1\ge k$ and $v_2\in
U_{{\ell}_2}$.
Now we simply delete $v$ and color $v_2$ with $F$.  We must again ensure that
$\rho_{G'}(R')\ge -2$ for all $R'\subseteq V(G')$.  If $v_2\notin R'$, then
$\rho_{G'}(R')=\rho_G(R')\ge 1$.  So, assume that $v_2\in R'$.  
If $G'[R']$ is edgeless, then clearly $\rho_{G'}(R')\ge 0$.  So assume that
$G'[R']$ has at least one edge.
Now, similar to above:
\begin{align*}
\rho_{G'}(R')&\ge
\rho_G(R'\cup\{v,v_1\})+2C_E-C_{F,{\ell}_1}-C_{U,j}-C_{U,{\ell}_2}+C_{F,{\ell}_2+1}\\
&\ge
\rho_G(R'\cup\{v,v_1\})+2C_E-3(k-{\ell}_1)-({3C_E-3}-3(j+{\ell}_2))+(C_E-3({\ell}_2+1))\\
&=
\rho_G(R'\cup\{v,v_1\})-3k+3{\ell}_1+3j+3{\ell}_2-3{\ell}_2\\
&=
\rho_G(R'\cup\{v,v_1\})-3k+3(j+{\ell}_1)\\
&\ge\rho(G'\cup\{v,v_1\})\\
&\ge -2.
\end{align*}
\aftermath
\end{proof}

It will turn out that when $j> \frac{C_E-5}6$ vertices in $U_j^2$ will have
nonnegative initial charge.   
By Lemma~\ref{getting rid of very small charge},
we know that $U_j^2=\emptyset$ when $j< \frac{C_E-7}6$.  Thus, to finish the proof
we focus on the vertices in $U^2_j$ when $j=\frac{C_E-5}6$ (in Section~\ref{k-even-sec},
where $k$ is even) and when $j=\frac{C_E-7}6$ (in Section~\ref{k-odd-sec}, where $k$ is odd).

\section{Finishing the Proof when $k$ is Even}
\label{k-even-sec}

Throughout this section, $k$ is always even.
Recall that when $k$ is even $C_E=3k-1$.\aside{$C_E$}
We let $\ell:=\frac{C_E-5}6=\frac{3k-6}6 = \frac{k}2-1$.\aside{$\ell$}

\begin{lem}
\label{no2-threads-k-even}
$G$ does not contain adjacent vertices $v$ and $w$ with $v,w\in U_\ell^2$.
\end{lem}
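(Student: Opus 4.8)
The plan is to argue by contradiction in the usual style of the preceding lemmas: assume $G$ contains adjacent $v,w$ with $v,w\in U_\ell^2$, delete one of them (say $v$), add enough $F$-neighbors to compensate, and invoke the Strong Gap Lemma to recolor the smaller graph, then extend back to $G$. Since $v$ has degree $2$, its neighbors are $w$ and one other vertex, call it $v'$; similarly $w$ has neighbors $v$ and one other vertex $w'$. The natural construction is to form $G'$ from $G-\{v,w\}$ by adding $\ell+1$ $F$-neighbors to each of $v'$ and $w'$. Intuitively, in any coloring of $G'$, the vertices $v'$ and $w'$ are ``filled up'' enough that we can handle the path $v'vw w'$: if both $v'$ and $w'$ get $F$, then we color the inner vertices $v,w$ with $I$ (they are nonadjacent to themselves only through each other, but an $F$-$F$ edge $v'v$ would be fine since $v'\in F$... wait — we must use $I$ on at most one of an edge, so we color $v$ with $I$ only if $v'\in F$, and $w$ with $I$ only if $w'\in F$, and since $v,w$ are adjacent we cannot color both with $I$). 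So the coloring-extension step needs care: if $v'$ and $w'$ both receive $F$ in $G'$, then coloring both $v$ and $w$ with $I$ is illegal. The fix is to note that adding $\ell+1$ $F$-neighbors forces each of $v',w'$ to be filled essentially to capacity, so in fact at least one of $v', w'$ will be recolored $I$ (having $\ge k$ $F$-neighbors), and then we color the corresponding inner vertex with $F$ and the other with $I$; alternatively we can appeal to Lemma~\ref{no2-threads-k-even}'s analogue reasoning and show both-$F$ is impossible. This is the main obstacle and will require the explicit arithmetic with $\ell=\frac k2-1$ and $C_E=3k-1$.

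Concretely, I would proceed as follows. First, set up $v,w,v',w'$ and check the degenerate cases ($v'=w'$, or $v'=w$, or edges among these vertices causing small $\rho$), each handled by the Weak Gap Lemma exactly as in Lemma~\ref{getting rid of very small charge}: if $\{v,w,v',\ldots\}$ induces too many edges, $\rho$ of that small set is below the Weak Gap bound, contradiction. Next, as in Lemma~\ref{getting rid of very small charge}, split into cases according to whether $v'$ or $w'$ is precolored $F_{\ell_i}$ with $\ell_i+\ell+1\ge k$ (the ``overflow'' case) or not. In the main case (no overflow), form $G'=G-\{v,w\}$ with $\ell+1$ $F$-neighbors added to each of $v'$ and $w'$ (recoloring a vertex $I$ if it reaches $k$ $F$-neighbors). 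Then show $\rho_{G'}(R')\ge-2$ for all $R'\subseteq V(G')$ by the Strong Gap Lemma: if $R'$ contains both $v'$ and $w'$, we have lost $\rho_G(R'\cup\{v,w\}) + 3C_E - 2C_{U,\ell} - 2\cdot 3(\ell+1)$ worth of potential relative to $\rho_G(R'\cup\{v,w\})$, and one computes this is $\ge \rho_G(R'\cup\{v,w\})$ using $C_{U,\ell}=\frac{3C_E-3}2-3\ell$ and $C_E=3k-1$, $\ell=\frac k2-1$; the cases $|R'\cap\{v',w'\}|\le 1$ are handled by the Strong Gap Lemma losing only $3(\ell+1)\le\frac{C_E-3}2$ per endpoint, as in the earlier lemma. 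By minimality of $G$, $G'$ has an $(I,F_k)$-coloring $\vph'$.

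Finally, extend $\vph'$ to an $(I,F_k)$-coloring of $G$, deriving the contradiction. Here the key point (the hard part flagged above) is: because we added $\ell+1$ $F$-neighbors to each of $v'$ and $w'$, we claim $\vph'$ does not color both $v'$ and $w'$ with $F$ while placing them in $F$-components of order $\le k-1$. If both $v'$ and $w'$ were recolored $I$, color $v,w$ with $F$ — fine as long as the new $F$-component $\{v,w\}$ has order $2\le k$, which holds since $k\ge 2$ and here $k$ is even so $k\ge 4$. If exactly one, say $v'$, is colored $I$ and $w'$ is colored $F$ in an $F$-component of order $q\le k$ (already including its $\ell+1$ fake neighbors, so $q\ge\ell+2$), then color $v$ with $F$ and $w$ with $I$; the $F$-component containing $v$ is $\{v'$'s side$\}$... no: $v'\in I$, so $v$'s $F$-component is just $\{v\}\cup$(component of $w'$ through $w$)? but $w\in I$, so $v$'s component is just $\{v\}$, order $1\le k$: fine. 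The remaining danger is both $v',w'\in F$; then one computes that their combined $F$-component together with $v,w$ would have order $\ge (\ell+2)+(\ell+2)+2 - (\text{overlap})$, and using $2(\ell+2)+2 = 2(\frac k2+1)+2 = k+4 > k$, this forces recoloring, i.e. this case cannot arise with both components small — so we may always color one of $v,w$ with $F$ and the other with $I$, never both the same constrained way. Thus $G$ has a coloring, contradicting that it is a counterexample, which completes the proof.
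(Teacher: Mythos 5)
There is a genuine gap, and it is in the core of your construction: you add $\ell+1$ fake $F$-neighbors to \emph{both} $v'$ and $w'$, whereas the potential budget only permits augmenting \emph{one} of them. Each block of $\ell+1$ fake neighbors lowers the potential of a set containing that vertex by $3(\ell+1)=\frac{C_E+1}2$, so augmenting both drops the potential of a set containing $v'$ and $w'$ by $C_E+1$ in total. Your own bookkeeping gives $\rho_{G'}(R')\ge\rho_G(R'_G\cup\{v,w\})-3$, and this is fine for proper subsets (Strong Gap Lemma gives $\frac{C_E-3}2-3\ge -2$), but it fails for $R'=V(G')$: there you only have $\rho_G(V(G))\ge -2$, so you get $\rho_{G'}(V(G'))\ge -5$, which does not rule out $G'$ itself being $(I,F_k)$-critical. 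Minimality therefore cannot be invoked to color $G'$, and the proof collapses at exactly the step you wave through with ``one computes this is $\ge\rho_G(R'\cup\{v,w\})$.'' The paper's proof adds the $\ell+1$ fake neighbors to $v'$ only (after using the Weak Gap Lemma on $\{v,w,v',w'\}$ to choose $v'$ not precolored $F_j$ with $j\ge k-\ell$); the single augmentation costs exactly $\frac{C_E+1}2$, which keeps every potential at least $-2$, including the full vertex set.

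The reason one-sided augmentation suffices is the asymmetric extension you were groping toward but did not land: if $\vph'(v')=\vph'(w')=F$, color $v$ with $F$ and $w$ with $I$. Coloring $w$ with $I$ is always legal (both its neighbors are then $F$), and $v$'s real $F$-component has order at most $(k-(\ell+1))+\ell+1=k$ because the fake neighbors at $v'$ reserved exactly the needed room. Your proposed resolutions of this case are incorrect: it is not true that one of $v',w'$ ``will be recolored $I$'' (they receive only $\ell+1=k/2<k$ fake neighbors), and the both-$F$ configuration is not self-contradictory. You also repeatedly drop the $\ell$ fake $F$-neighbors that $v$ and $w$ themselves carry (e.g., the component $\{v,w\}$ has order $2\ell+2=k$, not $2$), and $k=2$ is an admissible even value, so ``$k\ge4$'' is not available.
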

\begin{proof}
Assume the lemma is false.  Let $v'$ and $w'$ denote the remaining neighbors of
$v$ and $w$, respectively (possibly $v'=w'$).  
By symmetry between $v'$ and $w'$, we assume that $v'\notin F_j$ with $j\ge
k-\ell$ (otherwise $\rho(\{v,w,v',w'\})\le 2C_{F,k-\ell}+2C_{U,\ell}-3C_E =
6\ell+2(\frac{3C_E-3}2-3\ell)-3C_E=-3$, which contradicts the Weak Gap Lemma).
Form $G'$ from $G\setminus\{v,w\}$ by adding $\ell+1$ $F$-neighbors to $v'$.  
If $v'$ now has at least $k$ $F$-neighbors, then move $v'$ to $I$.
(By our assumption on $v'$, we know that $v'$ is not in an $F$-component of
order at least $k+1$.)

Fix $R'\subseteq V(G')$.  If $G'[R']$ has no edges, then $\rho_{G'}(R')\ge 0$,
since each individual vertex has nonnegative potential.  If $v'\notin R'$, then
$\rho_{G'}(R')=\rho_{G}(R')\ge 1$, by the Weak Gap Lemma.  Assume instead that
$v'\in R'$ and $G[R']$ contains at least one edge.  By the Strong Gap Lemma,
$\rho_{G'}(R')\ge \rho_G(R')-3(\ell+1)\ge
\frac{C_E-3}2-3(\ell+1)=\frac{C_E-3}2-\frac{C_E-5+6}2=-2$. 
Thus, by minimality, $G'$ has an $(I,F_k)$-coloring $\vph'$.  

We extend $\vph'$ to $v$ and
$w$ as follows.  If $\vph'(v')=I$, then color $v$ with $F$ and color $w$ with
the color unused on $w'$.  Similarly, if $\vph'(w')=I$, then color $w$ with $F$
and color $v$ with the color unused on $v'$.  (If $\vph'(v')=\vph'(w')=I$, then
$v$ and $w$ lie in an $F$-component with order $2(\ell)+2=\frac{C_E-5}3+2 =
2(\frac{k}2-1)+2=k$.)  Suppose instead that $\vph'(v')=\vph'(w')=F$.  Now color $w$
with $I$ and $v$ with $F$.  Note that this is an $(I,F_k)$-coloring of $G$,
because of the extra $F$-neighbors of $v'$ in $G'$.
\end{proof}

Now we use discharging to show that $G$ cannot exist.
We define our initial charge function so that our assumption $\rho(V(G))\ge -2$
gives an upper bound on the sum of the initial charges. 
(Recall the values of $C_{U,j}$ and $C_{F,j}$ from Definition~\ref{coeff-defn}.
By Lemma~\ref{noI-lem}, $I=\emptyset$.) 
Precisely, let

\begin{itemize}
\item $\ch(v):=C_Ed(v)-2C_{U,j}=C_Ed(v)-2(\frac{3C_E-3}2-3j)$ \aside{$\ch(v)$}\\
\mbox{\!~~~~~~~~~}$= C_E(d(v)-3)+3+6j$ for each $v\in U_j$; and
\item $\ch(v):=C_Ed(v)-2C_{F,j}=C_Ed(v)-2(C_E-3j)$ \\
\mbox{\!~~~~~~~~~}$=C_E(d(v)-2)+6j$ for each $v\in F_j$ with $j\le \ell+1$; and
\item $\ch(v):=C_Ed(v)-2C_{F,j}\ge C_Ed(v)-2(3k-3(\ell+2))$ \\
\mbox{\!~~~~~~~~~}$=C_Ed(v)-3k+6=C_E(d(v)-1)+5$ for each $v\in F_j$ with $j\ge
\ell+2$\\ 
\mbox{\!~~~~~~~~~~~~}(and this inequality is strict when $j>\ell+2$).
\end{itemize}

This definition of $\ch(v)$ yields the inequality
\begin{align}
\sum_{v\in V(G)}\ch(v) = -2\rho(V(G))\le 4.
\label{charge-sum-ineq-even}
\end{align}

\begin{table}[!h]
\centering
$
\begin{array}{c||c|c|c|c|c|c|c|c}
d(v) & U_0 & U_1 & F_1 & U_{\ell} & U_{\ell+1} & U_{\ell+2} &
F_{\ell+2}\\
\hline
1 &   &   &   &   &   &   & 4\\
2 &   &   & 4 &  0 & 2 & 8 \\
3 & 0 & 6 & C_E + 3 \\
4 & C_E-1 & C_E+5 & 2C_E+2 \\
\end{array}
$
\caption{%
Lower bounds on the final charges (when $k$ is even).\label{k-even-charge-table}}
\end{table}
\bigskip

We use a single discharging rule, and let \Emph{$\ch^*(v)$} denote the charge at $v$
after discharging.
\begin{itemize}
\item[(R1)] Each vertex in $U^2_{\ell}$ takes 1 from each neighbor.  
\end{itemize}

\begin{lem}
After discharging by (R1) above, each vertex $v$ with an entry in 
Table~\ref{k-even-charge-table} has $\ch^*(v)$ at least as large charge as shown.  Each
other vertex $v$ has $\ch^*(v)\ge 5$.
\label{k-even-easy-charge-lem}
\end{lem}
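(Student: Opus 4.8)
The plan is a short case analysis on the class $U_j$ or $F_j$ containing $v$ and on $d(v)$, resting on one bookkeeping observation about rule (R1): charge moves only from a vertex to each neighbour of it that lies in $U^2_\ell$. Thus a vertex sends at most $1$ along each incident edge, hence at most $d(v)$ in all, and a vertex receives charge only if it itself lies in $U^2_\ell$, in which case it receives exactly $2$. Consequently $\ch^*(v)\ge \ch(v)-d(v)$ for every $v\notin U^2_\ell$, while $v\in U^2_\ell$ must be treated separately.

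I would first handle $v\in U^2_\ell$. Here $d(v)=2$ and $6\ell=C_E-5$, so the formula for $\ch$ gives $\ch(v)=C_E(2-3)+3+6\ell=-C_E+3+6\ell=-2$. By Lemma~\ref{no2-threads-k-even}, neither neighbour of $v$ lies in $U^2_\ell$, so $v$ sends nothing; it receives $2$ by (R1), so $\ch^*(v)=\ch(v)+2=0$, matching Table~\ref{k-even-charge-table}.

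For every other vertex I would use only $\ch^*(v)\ge\ch(v)-d(v)$, together with three facts established earlier in the excerpt: Lemma~\ref{noI-lem} (so $v\notin I\cup U_k\cup F_k$), Lemma~\ref{delta2-lem} (so $d(v)\ge2$ unless $v\in F_j$ with $j\ge\ell+2$), and Lemma~\ref{getting rid of very small charge} (so $U^2_j=\emptyset$ for $j\le\ell-1$). Substituting the three cases of the definition of $\ch$ into $\ch(v)-d(v)$ gives $(C_E-1)(d(v)-3)+6j$ when $v\in U_j$; $(C_E-1)(d(v)-2)+6j-2$ when $v\in F_j$ with $j\le\ell+1$; and at least $(C_E-1)(d(v)-1)+4$ when $v\in F_j$ with $j\ge\ell+2$, with strict inequality if $j>\ell+2$. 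Each of these expressions is nondecreasing in $j$ and, on the relevant range of degrees, nondecreasing in $d(v)$, so it suffices to check the minimal admissible pairs $(j,d(v))$. These minimal pairs are precisely the entries of Table~\ref{k-even-charge-table} --- $U_0$ and $U_1$ at $d\in\{3,4\}$, $F_1$ at $d\in\{2,3,4\}$, $U_{\ell+1}$ and $U_{\ell+2}$ at $d=2$, and $F_{\ell+2}$ at $d=1$ --- and a direct substitution (again using $6\ell=C_E-5$) reproduces each listed value. Every remaining admissible pair --- larger $j$, larger $d(v)$, or $F_j$ with $2\le j\le\ell+1$ --- then gives a value at least $5$, using nothing more than $C_E=3k-1\ge5$ (so $C_E-1\ge4$).

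The computations are routine. The one step that is not mere substitution --- and the reason the structural lemmas of this section are needed --- is the class $U^2_\ell$, where the crude bound $\ch(v)-d(v)=-4$ is useless and Lemma~\ref{no2-threads-k-even} is exactly what rules out such a vertex having to forward charge. A minor point to watch is that for small $k$ some columns of Table~\ref{k-even-charge-table} coincide or vanish (for example $U_k=F_k=\emptyset$ always, and $U_\ell=U_0$ when $k=2$, so then $U_{\ell+2}=F_{\ell+2}=\emptyset$); in each such degenerate case $v$ is still covered by whichever column remains, so no separate argument is needed.
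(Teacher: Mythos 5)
Your proposal is correct and follows essentially the same route as the paper: the uniform bound $\ch^*(v)\ge \ch(v)-d(v)$, the three structural lemmas (Lemmas~\ref{noI-lem}, \ref{delta2-lem}, and \ref{getting rid of very small charge}) to restrict the admissible pairs $(j,d(v))$, and Lemma~\ref{no2-threads-k-even} to handle the one class $U^2_\ell$ where that crude bound fails. The only cosmetic difference is that you organize the case check via monotonicity in $j$ and $d(v)$, whereas the paper substitutes directly.
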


\begin{proof}
Note that $\ch^*(v)\ge \ch(v)-d(v)$ for all $v\in V(G)$.  If $v\in U_j$, then
$\ch^*(v)\ge C_E(d(v)-3)+3+6j-d(v)=(C_E-1)(d(v)-3)+6j$.  If $v\in F_j$ and
$j\le \ell+1$, then $\ch^*(v) \ge C_E(d(v)-2)+6j-d(v)=(C_E-1)(d(v)-2)+6j-2$.
If $v\in F_j$ and $j\ge \ell+2$, then $\ch^*(v)\ge C_E(d(v)-1)+5-d(v) =
(C_E-1)(d(v)-1)+4$ (and this inequality is strict when $j>\ell+2$).
By Lemma~\ref{noI-lem}, $I=\emptyset$; by Lemma~\ref{getting rid of very
small charge}, $U^2_j=\emptyset$ when $j<\ell$ .  By Lemma~\ref{delta2-lem},
each $v\in V(G)$ has $d(v)\ge 2$ unless $v\in F^1_j$
with $j\ge \ell+2$.  If $v\in U^2_{\ell+1}$, then $\ch^*(v)\ge
-C_E+1+(C_E-5+6)=2$.  Thus, if $v\notin U_{\ell}^2$, then the lemma follows
from what is above.

By Lemma~\ref{no2-threads-k-even}, if $v\in U^2_{\ell}$, then $v$ does not give
away any charge.  So $v$ finishes with $\ch(v)+2(1) = -C_E+3+6\ell+2(1)
= -C_E+5+(C_E-5)=0$.  
\end{proof}

\begin{cor}
$V(G)\subseteq U^2_{\ell}\cup U^2_{\ell+1}\cup U^3_0\cup U^4_0
\cup F^1_{\ell+2} \cup F^2_1$ (with $U^4_0=\emptyset$ when $k\ge 4$) and
$2|U^2_{\ell+1}|+4|U^4_0|+4|F^1_{\ell+2}| + 4|F^2_1|\le 4$.
\end{cor}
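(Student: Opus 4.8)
The plan is to read off both assertions from the charge count already set up in Lemma~\ref{k-even-easy-charge-lem}. Since rule (R1) only moves charge between adjacent vertices, the total charge is preserved, so by \eqref{charge-sum-ineq-even} we have $\sum_{v\in V(G)}\ch^*(v)=\sum_{v\in V(G)}\ch(v)=-2\rho(V(G))\le 4$. The first thing I would record is that, by Lemma~\ref{k-even-easy-charge-lem}, every vertex has $\ch^*(v)\ge 0$: the smallest value appearing in Table~\ref{k-even-charge-table} is $0$, and every vertex not represented there has $\ch^*(v)\ge 5$. Consequently, a single vertex with $\ch^*(v)\ge 5$ would already force $\sum_v\ch^*(v)\ge 5>4$, so in fact $\ch^*(v)\le 4$ for every $v\in V(G)$.

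Next I would enumerate which vertex types can satisfy $\ch^*(v)\le 4$. Lemma~\ref{noI-lem} removes $I$; Lemma~\ref{delta2-lem} forces $d(v)\ge 2$ unless $v\in F^1_j$ with $j\ge \ell+2$; Lemma~\ref{getting rid of very small charge} forces $j\ge \ell$ whenever $v\in U^2_j$; and Lemma~\ref{no2-threads-k-even} (through (R1)) pins $\ch^*(v)=0$ for $v\in U^2_\ell$. Feeding these into the three bounds established in the proof of Lemma~\ref{k-even-easy-charge-lem} --- namely $\ch^*(v)\ge (C_E-1)(d(v)-3)+6j$ for $v\in U_j$, $\ch^*(v)\ge (C_E-1)(d(v)-2)+6j-2$ for $v\in F_j$ with $j\le \ell+1$, and $\ch^*(v)\ge (C_E-1)(d(v)-1)+4$ for $v\in F_j$ with $j\ge \ell+2$ (the last strict when $j>\ell+2$) --- a short case check shows that the only vertex types admitting $\ch^*(v)\le 4$ are $U^2_\ell$ (charge $0$), $U^3_0$ (charge $0$), $U^2_{\ell+1}$ (charge $\ge 2$), $F^2_1$ (charge $\ge 4$), $F^1_{\ell+2}$ (charge $\ge 4$), and $U^4_0$ (charge $\ge C_E-1=3k-2$). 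Since $3k-2\ge 10>4$ once $k\ge 4$, the class $U^4_0$ must be empty for $k\ge 4$, whereas for $k=2$ it survives with $\ch^*\ge 4$. This gives the containment $V(G)\subseteq U^2_\ell\cup U^2_{\ell+1}\cup U^3_0\cup U^4_0\cup F^1_{\ell+2}\cup F^2_1$ with $U^4_0=\emptyset$ for $k\ge 4$.

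For the numerical bound I would simply weigh the (nonnegative) terms of $\sum_v\ch^*(v)$ against the lower bounds just recorded: the vertices of $U^2_\ell$ and $U^3_0$ contribute at least $0$, each vertex of $U^2_{\ell+1}$ contributes at least $2$, and each vertex of $U^4_0$, $F^1_{\ell+2}$, and $F^2_1$ contributes at least $4$ (using $C_E-1\ge 4$ for $U^4_0$). Since these six classes cover $V(G)$, this gives
\[
2|U^2_{\ell+1}|+4|U^4_0|+4|F^1_{\ell+2}|+4|F^2_1|\ \le\ \sum_{v\in V(G)}\ch^*(v)\ =\ \sum_{v\in V(G)}\ch(v)\ \le\ 4,
\]
which is exactly the claimed inequality.

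The only step requiring any care is the case analysis in the middle paragraph: one must be sure no vertex type has been overlooked and that the effect of (R1) is accounted for correctly (a vertex can lose up to $d(v)$ units to $U^2_\ell$-neighbors, which is precisely the slack built into the bound $\ch^*(v)\ge \ch(v)-d(v)$ used in Lemma~\ref{k-even-easy-charge-lem}). Because every relevant lower bound is already tabulated there, this is a finite and routine verification rather than a genuine obstacle; the real content of the corollary is just that nonnegative charges summing to at most $4$ leave almost no room.
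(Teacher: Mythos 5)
Your proof is correct and follows exactly the route the paper intends: the paper's own proof of this corollary is just the one-line observation that it "follows directly from Lemma~\ref{k-even-easy-charge-lem} and \eqref{charge-sum-ineq-even}", and your write-up is a faithful (and accurately checked) expansion of that — conservation of charge under (R1), nonnegativity of all final charges, and the tabulated lower bounds $0,0,2,4,4,C_E-1$ for the six surviving classes.
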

\begin{proof}
This follows directly from Lemma~\ref{k-even-easy-charge-lem}
and~\eqref{charge-sum-ineq-even}.
\end{proof}

\begin{lem}
$G$ has an $(I,F_k)$-coloring, and is thus not a counterexample.
\end{lem}
\begin{proof}
We now construct an $(I,F_k)$-coloring of $G$.
We color each $v\in U_{\ell}^2$ with $I$ and each $v\notin U^2_{\ell}$ with $F$.
By Lemma~\ref{no2-threads-k-even}, we know that $U^2_{\ell}$ is an independent
set. So we only must check that $G-U^2_{\ell}$ is a forest in which each
component has order at most $k$.

Suppose that $G-U^2_{\ell}$ contains a cycle, $C$.  Clearly $C$ has no vertex in
$U^4_0\cup F_1^2$, since such a vertex would end with charge at least 6, a contradiction.
(Also, $C$ has no vertex in $F^1_{\ell+2}$.)
Furthermore, each vertex in $U^2_{\ell+1}\cup U^3_0$ on such a cycle would end
with charge at least 2.  Since $G$ is simple, $C$ has length at least 3, so its
vertices end with charge at least 6, a contradiction.  Thus, $G-U^2_{\ell}$ is
acyclic.  If $U^4_0\cup F^1_{\ell+2} \cup F_1^2\ne \emptyset$, then $U^2_{\ell+1}=\emptyset$
and $|U^4_0\cup F^1_{\ell+2} \cup F^2_1|=1$.  Furthermore, $G$ is a bipartite graph with
$U^2_{\ell}$ as one part and $U_0^3\cup U^4_0 \cup F^1_{\ell+2} \cup F^2_1$ as
another (otherwise $G$ has total charge at least 5, a contradiction).  So
$G$ has an $(I,F_k)$-coloring using $I$ on $U^2_{\ell}$ and $F$ on
$U^3_0\cup U^4_0 \cup F^1_{\ell+2} \cup F^2_1$.

Assume instead that $U^4_0 \cup F^1_{\ell+2} \cup F^2_1=\emptyset$.  Recall
that $G-U^2_{\ell}$ is a forest.
Let $T$ denote a component of this forest, let $n_2:=|U^2_{\ell+1}\cap V(T)|$,
and let $n_3:=|U^3_0\cap V(T)|$.  The number of edges incident to $T$ is
$(\sum_{v\in V(T)}d(v))-2|E(T)|=2n_2+3n_3-2(n_2+n_3-1)=n_3+2$.  Recall that $T$ gives
away 1 along each such edge.  Each vertex counted by $n_3$ begins with 3, and
each vertex counted by $n_2$ begins with 4.  Thus the total final charge of
vertices of $T$ is $4n_2+3n_3-(n_3+2) = 4n_2+2n_3-2$.  Since $G$ has total
charge at most 4, either $n_2=1$ and $n_3\le 1$ or else $n_2=0$ and $n_3\le 3$.  
Now color all vertices of $T$ with $F$, except when $n_2=0$, $n_3=3$, and
$k=2$.  In that case, the total final charge of $T$ is 4, so every other
component of $G-U^2_{\ell}$ is an isolated vertex in $U^3_0$.  Now color the
leaves of $T$ with $F$ and the center vertex, say $v$, with $I$. Also
recolor the neighbor of $v$ outside of $T$ with $F$.
\end{proof}

\section{Finishing the Proof when $k$ is Odd}
\label{k-odd-sec}
\subsection{Reducible Configurations when $k$ is Odd}
\label{k-odd-reduc-sec}
Throughout this section, $k$ is always odd.
Recall that when $k$ is odd $C_E=3k-2$.\aside{$C_E$}
Further, let $\ell:=\frac{C_E-7}6=\frac{3k-9}6=\frac{k-3}2$.\aside{$\ell$}
(Note that $C_E$ and $\ell$ are defined differently from the previous section.)
We will frequently use the fact that $2\ell+3=k$.

\begin{lem}
\label{no2-threads-k-odd}
$G$ does not contain adjacent vertices $v$ and $w$ with $v\in U_\ell^2$ and $w\in
U_\ell^2\cup U_{\ell+1}^2$.
\end{lem}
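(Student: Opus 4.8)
The plan is to follow the template of Lemma~\ref{no2-threads-k-even}: assume the configuration occurs, delete $v$ and $w$, add fake $F$-neighbors to a surviving vertex to obtain a strictly smaller graph $G'$, color $G'$ by minimality, and extend the coloring. Write $v'$ for the neighbor of $v$ other than $w$ and $w'$ for the neighbor of $w$ other than $v$ (possibly $v'=w'$); since $v\in U_\ell^2$, the vertex $v$ has exactly the two neighbors $v'$ and $w$.

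\emph{Reducing to the good case.} First I would arrange that $v'\notin F_j$ for every $j\ge k-\ell$. If $w\in U_\ell^2$ this follows from the symmetry $v\leftrightarrow w$ (exactly as in the even case): were $v'$ and $w'$ both in such an $F_j$, then using $C_{F,k-\ell}=3\ell$ one gets $\rho(\{v,v',w,w'\})\le 2C_{F,k-\ell}+2C_{U,\ell}-3C_E=-3$, contradicting the Weak Gap Lemma, so after possibly swapping $v$ and $w$ the assumption holds. If $w\in U_{\ell+1}^2$ there is no such symmetry, so the case $v'\in F_j$ with $j\ge k-\ell$ needs a separate argument. In that case every $(I,F_k)$-coloring of $G$ colors $v$ with $I$ (coloring $v$ with $F$ would join it, together with its $\ell$ fake $F$-neighbors, to the $F$-component of $v'$, forcing order at least $(k-\ell)+\ell+1=k+1$), hence colors $w$ with $F$; thus $G$ is colorable iff $(G-v)$, with $w$ moved from $U_{\ell+1}$ to $F_{\ell+2}$, is colorable. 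Calling this graph $G''$, one checks $G''$ is $(I,F_k)$-critical and computes $\rho_{G''}(V(G''))=\rho_G(V(G))+3\ell$; minimality of $G$ then gives $\rho_G(V(G))\le\rho_{G''}(V(G''))\le-3$, a contradiction. (Degenerate sub-configurations, such as $v'=w'$, yield an even smaller potential on a $3$-vertex set.) So henceforth $v'\notin F_j$ with $j\ge k-\ell$.

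\emph{The reduction and extension.} Form $G'$ from $G\setminus\{v,w\}$ by adding $\ell+1$ fake $F$-neighbors to $v'$, moving $v'$ into $I$ if it thereby acquires $k$ $F$-neighbors; our hypothesis on $v'$ keeps $v'$ out of an $F$-component of order exceeding $k$. For $R'\subseteq V(G')$: if $G[R']$ is edgeless then $\rho_{G'}(R')\ge 0$; if $G[R']$ has an edge and $v'\notin R'$ then $\rho_{G'}(R')=\rho_G(R')\ge1$ by the Weak Gap Lemma; and if $G[R']$ has an edge and $v'\in R'$ then the Strong Gap Lemma gives $\rho_{G'}(R')\ge\rho_G(R')-3(\ell+1)\ge\frac{C_E-3}2-3(\ell+1)=-1$, using $\frac{C_E-3}2=3\ell+2$. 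So $\rho_{G'}(R')\ge-2$ for every $R'$, and since $G'$ is smaller than $G$ it has an $(I,F_k)$-coloring $\vph'$. Extend $\vph'$ to $v$ and $w$ as in Lemma~\ref{no2-threads-k-even}: if $\vph'(v')=I$, color $v$ with $F$ and $w$ with the color $\vph'$ omits at $w'$; symmetrically if $\vph'(w')=I$; and if $\vph'(v')=\vph'(w')=F$, color $w$ with $I$ and $v$ with $F$. When $\vph'(v')=\vph'(w')=I$, both $v$ and $w$ get $F$ and lie in an $F$-component of order $2+\ell+(\text{fake $F$-neighbors of }w)\le 2\ell+3=k$ (here $2\ell+3=k$ is exactly what is needed, since $w\in U_\ell^2\cup U_{\ell+1}^2$); in the remaining subcases the vertex newly colored $F$ is either isolated among its real neighbors or joins the $F$-component of $v'$, which the $\ell+1$ extra fakes keep to order at most $(k-\ell-1)+1+\ell=k$. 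In every case we obtain an $(I,F_k)$-coloring of $G$, contradicting that $G$ is a counterexample.

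The one genuinely new point beyond the even case is the loss of the $v\leftrightarrow w$ symmetry when $w\in U_{\ell+1}^2$, which forces the separate handling of $v'\in F_j$ with $j\ge k-\ell$; I expect verifying that $G''$ is $(I,F_k)$-critical — particularly the index-reduction conditions at $v'$ in the borderline case $j=k-\ell$, and at $w$ — to be the most delicate part. The identity $2\ell+3=k$ is what makes the $F$-component bookkeeping in the extension come out exactly right.
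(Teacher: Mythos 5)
Your main reduction is exactly the paper's: delete $v$ and $w$, add $\ell+1$ fake $F$-neighbors to $v'$, verify $\rho_{G'}(R')\ge -2$ via the Weak and Strong Gap Lemmas (the $-1$ computation is right), and extend $\vph'$ using $2\ell+3=k$. The symmetry argument when $w\in U_\ell^2$ is also fine and matches the even case. The problem is your handling of the sub-case $w\in U_{\ell+1}^2$ with $v'\in F_j$, $j\ge k-\ell$.

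There you hang everything on the assertion that $G''$ (namely $G-v$ with $w$ moved to $F_{\ell+2}$) is $(I,F_k)$-critical, and you have correctly flagged this as delicate --- but it is not a routine inheritance and your argument does not close without it. Criticality of $G''$ requires that every proper subgraph of $G''$ be colorable; a coloring of the corresponding proper subgraph of $G$ (supplied by criticality of $G$) may color $w$ with $I$, which violates $w$'s precoloring in $G''$. Worse, the index-reduction condition at $w$ demands a coloring of $G''$ with $w\in F_{\ell+1}$, which corresponds to no coloring of any subgraph of $G$ at all. The standard fallback --- extract a critical subgraph $H\subseteq G''$ and push $\rho_H(V(H))\le -3$ back into $G$ --- also fails at the boundary: if $V(H)\subsetneq V(G'')$ and $w\in V(H)$, reverting $w$ from $F_{\ell+2}$ to $U_{\ell+1}$ raises the potential by $C_{U,\ell+1}-C_{F,\ell+2}=\frac{C_E+3}2$, giving only $\rho_G(V(H))\le\frac{C_E-3}2$, which is exactly the Strong Gap bound and yields no contradiction. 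So as written this branch has a genuine gap.

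The whole detour is unnecessary: in this sub-case the configuration itself violates the Strong Gap Lemma. Since $k-\ell=\flr{(k+3)/2}$, we have $C_{F,j}\le C_{F,k-\ell}=3\ell$, and the set $\{v',v,w\}$ induces at least the two edges $v'v$ and $vw$, so
$$\rho(\{v',v,w\})\le C_{F,k-\ell}+C_{U,\ell}+C_{U,\ell+1}-2C_E=\frac{C_E-5}2<\frac{C_E-3}2,$$
contradicting the Strong Gap Lemma (this is how the paper dispatches it). Replacing your $G''$ construction with this one-line computation repairs the proof.
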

\begin{proof}
Assume the lemma is false.  Let $v'$ and $w'$ denote the remaining neighbors of
$v$ and $w$, respectively (possibly $v'=w'$).  Form $G'$ from
$G\setminus\{v,w\}$ by adding $\ell+1$ $F$-neighbors to $v'$.  
(Suppose this puts $v'$ in an $F$-component of order at least $k+1$.  
In this case, $\rho(\{v',v,w\})\le C_{F,k-\ell}+C_{U,\ell}+C_{U,\ell+1}-2C_E
= 3\ell+(\frac{3C_E-3}2-3\ell)+(\frac{3C_E-3}2-3(\ell+1))-2C_E = 3C_E-3-2C_E-3(\ell+1)
= C_E-3-3(\frac{C_E-7}6+1) = \frac{C_E-5}2$, which contradicts the Strong Gap Lemma. 
So $v'$ is not in an $F$-component of order at least $k+1$.)

Now we show that $\rho_{G'}(R')\ge -2$ for all $R'\subseteq V(G')$.  Fix some
$R'\subseteq V(G')$.  If $v'\notin R'$, then $\rho_{G'}(R')=\rho_{G}(R')\ge
1$, by the Weak Gap Lemma.  If $G'[R']$ has no edges, then $\rho_{G'}(R')\ge
0$, since each coefficient in Definition~\ref{rho-defn} is nonnegative.  
Assume instead that $v'\in R'$ and $G[R']$ has at least one edge.
By the Strong Gap Lemma, $\rho_{G'}(R')\ge \rho_G(R')-3(\ell+1)\ge
\frac{C_E-3}2-3(\ell+1)=\frac{C_E-3}2-\frac{C_E-7+6}2=-1$. 
Thus, $G'$ has an $(I,F_k)$-coloring $\vph'$.  

We extend $\vph'$ to $v$ and
$w$ as follows.  If $\vph'(v')=I$, then color $v$ with $F$ and color $w$ with
the color unused on $w'$.  Similarly, if $\vph'(w')=I$, then color $w$ with $F$
and color $v$ with the color unused on $v'$.  (If $\vph'(v')=\vph'(w')=I$, then
$v$ and $w$ lie in an $F$-component with order at most $2(\ell)+3=\frac{C_E-7}3+3 =
\frac{3k-9}3+3=k$.)  Suppose instead that $\vph'(v')=\vph'(w')=F$.  Now color $w$
with $I$ and $v$ with $F$.  Note that this is an $(I,F_k)$-coloring of $G$,
because of the extra $F$-neighbors of $v'$ in $G'$.
\end{proof}

\begin{lem}
\label{no3withall2s-lem}
$G$ does not contain a vertex $v\in U_0^3$ with all three neighbors in
$U_\ell^2$.
\end{lem}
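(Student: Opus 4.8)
The plan is to delete $v$ together with its three neighbors $v_1,v_2,v_3$, use the minimality of $G$ to color the resulting smaller graph, and then extend that coloring back over the four deleted vertices. By hypothesis each $v_i\in U^2_\ell$, so $v_i$ has exactly one neighbor $v_i'$ other than $v$, plus $\ell$ fake $F$-neighbors. Since $G$ is simple, $v_i'\ne v$; since each $v_i$ lies in $U^2_\ell$, Lemma~\ref{no2-threads-k-odd} forbids $v_i\sim v_j$ for $i\ne j$, so also $v_i'\ne v_j$. Hence each $v_i'$ lies in $V(G)\setminus\{v,v_1,v_2,v_3\}$, although the $v_i'$ need not be pairwise distinct (this will not matter). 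Set $G':=G-\{v,v_1,v_2,v_3\}$, with the inherited precoloring; then $G'$ is smaller than $G$.

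I would first check that $G'$ has an $(I,F_k)$-coloring. If not, $G'$ contains an $(I,F_k)$-critical subgraph $G''$, also smaller than $G$, so by minimality $\rho_{G''}(V(G''))\le-3$; arguing exactly as in the proof of the Weak Gap Lemma, $\rho_{G'}(V(G''))\le\rho_{G''}(V(G''))\le-3$. Since $V(G'')\subseteq V(G')$ and the precoloring is inherited, $\rho_G(V(G''))=\rho_{G'}(V(G''))\le-3$ with $\emptyset\ne V(G'')\subsetneq V(G)$, contradicting the Weak Gap Lemma. So fix an $(I,F_k)$-coloring $\vph'$ of $G'$. Note that $v,v_1,v_2,v_3$ are uncolored in $G$, so we may freely color them.

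The extension splits into two cases. If $\vph'(v_i')=I$ for every $i$, color $v$ with $I$ and each $v_i$ with $F$: each $v_i$ then lies in an $F$-component of order $\ell+1\le k$ consisting of $v_i$ and its fakes, these are disjoint (the $v_i$ are pairwise nonadjacent and $v,v_1',v_2',v_3'$ all lie in $I$), and no edge has both ends in $I$. Otherwise some $v_i'$ is colored $F$; color $v$ with $F$ and, for each $i$, color $v_i$ with $I$ if $\vph'(v_i')=F$ and with $F$ if $\vph'(v_i')=I$. Every edge $v_iv_i'$ and every edge $vv_i$ then has an endpoint in $F$, so $I$ stays independent; the only new $F$-component is the one containing $v$, a tree consisting of $v$ together with $v_i$ and its $\ell$ fakes for each $i$ with $\vph'(v_i')=I$. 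Since at least one $v_i'$ is colored $F$, at most two indices contribute, so this component has order at most $1+2(\ell+1)=2\ell+3=k$. Either way $G$ has an $(I,F_k)$-coloring, contradicting that it is a counterexample.

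The delicate point is the order count in the second case: it works because $2\ell+3=k$, and, crucially, because $v$'s only neighbors are $v_1,v_2,v_3$, so whenever $v$ is colored $F$ it is shielded from the rest of $G$ and its $F$-component cannot absorb any large $F$-component already present in $\vph'$. This shielding is exactly why, in contrast to Lemma~\ref{no2-threads-k-odd} and the analogous earlier lemmas, we need not add fake $F$-neighbors to $v_1',v_2',v_3'$ and so do not invoke the Strong Gap Lemma here.
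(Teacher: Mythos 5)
Your proposal is correct and follows essentially the same route as the paper: delete $v$ and its three $2$-neighbors, color the rest (the paper gets this directly from criticality of $G$, without your detour through the Weak Gap Lemma), give each $v_i$ the color unused on $v_i'$, and then color $v$ with $I$ exactly when all three $v_i$ receive $F$; the order bound $1+2(\ell+1)=2\ell+3=k$ is the same key computation. No substantive differences.
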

\begin{proof}
Suppose the lemma is false.  Form $G'$ from $G$ by deleting $v$ and its three
2-neighbors.  Since $G$ is critical, $G'$ has an $(I,F_k)$-coloring $\vph'$.
Now we extend $\vph'$ to all of $G$.  Color each 2-neighbor of $v$ with the
color unused on its neighbor in $G'$.  If all three 2-neighbors of $v$ are
colored $F$, then color $v$ with $I$.  Otherwise, color $v$ with $F$.  
This produces an $(I,F_k)$-coloring of $G$ (because $2\ell+3=k$).
\end{proof}

\begin{lem}
\label{noadj3swith32nbrs-lem}
$G$ does not contain adjacent vertices $v, w\in U_0^3$ such that $v$ has two
neighbors in $U^2_\ell$ and $w$ has at least one neighbor in $U^2_\ell$.
\end{lem}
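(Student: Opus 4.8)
The plan is to show that a graph $G$ containing this configuration cannot be critical: delete the configuration, color what remains by minimality of $G$, and extend. Name the vertices so that $v$ has neighbors $a_1,a_2\in U_\ell^2$ and $w$, while $w$ has neighbors $v$, $b\in U_\ell^2$, and a third vertex $c$; let $a_1',a_2',b'$ be the second neighbors of $a_1,a_2,b$. (If some of $a_1',a_2',b',c$ coincide with one another or with $\{v,w,a_1,a_2,b\}$, the Weak and Strong Gap Lemmas dispose of the configuration directly, exactly as in Lemmas~\ref{no2-threads-k-odd}--\ref{no3withall2s-lem}; assume they are distinct.) Form $G'$ from $G$ by deleting $v,w,a_1,a_2,b$ and adding a bounded number of fake $F$-neighbors to the boundary vertices among $a_1',a_2',b',c$ — moving a vertex from $U_j$ (resp.\ $F_j$) to $U_{j+m}$ (resp.\ $F_{j+m}$), and recoloring a vertex with $I$ if it thereby acquires $k$ fake $F$-neighbors. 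Since $2\ell+3=k$, the role of these fake neighbors is to force the $F$-component through each boundary vertex to have few enough ``real'' vertices that reattaching the corresponding $U_\ell^2$-vertex (with its $\ell$ fake neighbors), and possibly $v$ or $w$, keeps it of order at most $k$.

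First I would verify $\rho_{G'}(R')\ge -2$ for every $R'\subseteq V(G')$, so minimality of $G$ yields an $(I,F_k)$-coloring $\vph'$ of $G'$. If $G'[R']$ has no edge this is immediate. If it has an edge and $R'$ meets at most one modified vertex, the Strong Gap Lemma gives $\rho_{G'}(R')\ge\rho_G(R')-3m\ge\frac{C_E-3}2-3m\ge -2$ whenever $m\le\ell+1$. When $R'$ meets several modified vertices the local estimate is too weak, so — following the proof of Lemma~\ref{getting rid of very small charge} — I would bound $\rho_{G'}(R')$ below through $\rho_G\big(R'\cup\{v,w,a_1,a_2,b\}\big)$: reinserting the five deleted vertices restores $2C_{U,0}+3C_{U,\ell}=6C_E+3$ of weight while costing $C_E$ per recreated edge, and undoing each bump restores $3m$; one checks that these terms offset the loss except possibly when $R'\cup\{v,w,a_1,a_2,b\}=V(G)$, which is settled by a direct computation using $\rho_G(V(G))\ge -2$.

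To extend $\vph'$ I would use the device from Lemmas~\ref{no2-threads-k-odd}--\ref{no3withall2s-lem}: color each of $a_1,a_2,b$ with the color \emph{not} used by $\vph'$ on its surviving neighbor (so its $F$-component never merges with that neighbor's), then choose the colors of $v,w$ by a short case analysis on which of $a_1',a_2',b',c$ received $I$. Using $2\ell+3=k$ one checks every resulting $F$-component has order at most $k$ and no edge has both ends in $I$: e.g.\ if neither $a_1'$ nor $a_2'$ is in $I$ then $a_1,a_2$ get $I$ and $v$ gets $F$ (with $F$-component $\{v\}$ or $\{v,w\}$), while if some $a_i'$ is in $I$ then $v$ gets $I$ and $w$ gets $F$, the boundary fake neighbors keeping the components at $a_1,a_2,b,c$ within $k$. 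This yields an $(I,F_k)$-coloring of $G$, a contradiction. The main obstacle is calibrating the fake $F$-neighbors: they must be numerous enough for the extension to survive the worst subcases (several of $a_1',a_2',b'$ in $I$, and $c$ in $F$ so that $w$ drags $c$'s component along), but sparse enough to keep $G'$ colorable; these requirements are nearly tight, which is exactly why the colorability check for $R'$ meeting several modified vertices must be routed through $G$ rather than estimated locally, and why the equality cases need to be examined individually.
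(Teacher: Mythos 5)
Your overall strategy --- delete the whole configuration $\{v,w,a_1,a_2,b\}$, bump the boundary vertices by fake $F$-neighbors, color by minimality, and extend by giving each of $a_1,a_2,b$ the color opposite to its surviving neighbor --- breaks in a concrete subcase, and the failure is not one that ``calibrating the fake $F$-neighbors'' can repair. Suppose $\vph'$ puts $a_1'\in I$, $a_2'\in F$, $b'\in I$, and $c\in I$. Your rule forces $a_1\in F$ and $b\in F$ (each is adjacent to an $I$-vertex), and colors $a_2$ with $I$; then $v$ is adjacent to $a_2\in I$, so $v$ is forced into $F$ (note that your sentence ``if some $a_i'$ is in $I$ then $v$ gets $I$'' is wrong in this mixed case), and $w$ is adjacent to $c\in I$, so $w$ is also forced into $F$. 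The resulting $F$-component contains the path $a_1$--$v$--$w$--$b$ together with the $\ell$ precolored fake $F$-neighbors at each of $a_1$ and $b$, for a total of $2\ell+4=k+1$ vertices. This overflow is entirely internal to the deleted configuration and its precoloring --- no external component of $\vph'$ is even involved --- so no choice of fake neighbors on $a_1',a_2',b',c$ can exclude it. The only escape is to let $a_2$ (or $b$) \emph{merge} with its neighbor's $F$-component, which your proposal explicitly forbids, and which requires placing exactly $\ell+1$ fake neighbors on $a_2'$ (resp.\ $b'$); that is precisely the boundary of what the Strong Gap Lemma tolerates, and with $\ell+1$ fakes on three or four boundary vertices simultaneously the potential of $R'=V(G')$ drops to $-3$, so the colorability of $G'$ also fails at the extreme you would need.

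The paper takes a structurally different route that sidesteps this: it deletes only one of $v$'s two $U_\ell^2$-neighbors and \emph{contracts} the path $w$--$z$--$z'$ (your $w$--$b$--$b'$), keeping $v$, its other 2-neighbor, $w$, and the boundary inside $G'$. The contraction forces $w$ and $b'$ to receive the same color in $\vph'$, so when both are in $F$ the single component through $w*b'$ certifies $s_1+s_2-1\le k$, yielding the dichotomy ``either $w$'s side or $b'$'s side has order at most $k-(\ell+1)$'' that the extension needs; the interactions that sink your deletion argument are resolved by $\vph'$ itself rather than by the reattachment step.
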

\begin{proof}
Suppose the lemma is false.  Denote the 2-neighbors of $v$ by $x$ and $y$,
and denote a 2-neighbor of $w$ in $U^2_{\ell}$ by $z$.  Denote by $w'$, $x'$,
$y'$, and $z'$ the remaining neighbors of $w$, $x$, $y$, and $z$ (other than
$v$, $w$, and $z$); see Figure~\ref{noadj3swith32nbrs-fig}.  
We want to form $G'$ from $G$ by deleting $y$ and contracting both edges
incident to $z$; however, this creates parallel edges when $w'z'\in E(G)$, so
we consider two cases.  Before doing that, we briefly consider the possibility
that $y=z$.

If $y=z$, then by criticality we color $G-\{v,w,x,y/z\}$. To extend the coloring
to $G$, we color $w$ with the color unused on $w'$ and color $x$ with the color
unused on $x'$.  If both $w$ and $x$ are colored $F$, then we color $v$ with
$I$; otherwise, we color $v$ with $F$.  Finally, if both $v$ and $w$ are colored
$F$, then we color $y/z$ with $I$; otherwise, we color $y/z$ with $F$.  It is
easy to check that this coloring has no cycle colored $F$ and no edge with both
endpoints colored $I$.  It also has no
$F$-component of size larger than $2\ell+3=k$.  Thus, we assume $y\ne z$.

\textbf{Case 1: $\bs{w'z'\notin E(G)}$.}
Form $G'$ from $G$ by deleting $y$ and contracting both edges
incident to $z$; the new vertex $w\!*\!z'$ formed from 
$w$ and $z'$ inherits the precoloring of $z'$.  

Consider $R'\subseteq V(G')$.  
If $w*z'\notin R'$, then $\rho_{G'}(R')=\rho_G(R')\ge 1$, by the Weak Gap Lemma.
If $G'[R']$ has no edges, then $\rho_{G'}(R')\ge 0$, since each individual
vertex has nonnegative potential.  
So assume that $w*z'\in R'$ and $G'[R']$ has at least one edge.  Now

\begin{align*}
\rho_{G'}(R')&=\rho_G((R'\setminus\{w*z'\})\cup\{w,z,z'\})-C_{U,\ell}-C_{U,0}+2C_E\\
&=\rho_G((R'\setminus\{w*z'\})\cup\{w,z,z'\})-(3C_E-3-3\ell)+2C_E\\
&=\rho_G((R'\setminus\{w*z'\})\cup\{w,z,z'\})-C_E+3+\frac{C_E-7}2\\
&=\rho_G((R'\setminus\{w*z'\})\cup\{w,z,z'\})-\frac{C_E+1}2\\
&\ge -2, 
\end{align*}
where the final inequality holds because the Strong Gap Lemma gives
$\rho_G(R'\setminus\{w*z'\})\cup\{w,z,z'\})\ge \frac{C_E-3}2$. 
Thus, $G'$ has an $(I,F_k)$-coloring $\vph'$.  

\textbf{Case 2: $\bs{w'z'\in E(G)}$.}
Again form $G'$ from $G$ by deleting $y$ and contracting both edges incident to
$z$; the new vertex $w*z'$ formed from $w$ and $z'$ inherits the precoloring of
$z'$.  Since $w'z'\in E(G)$, this creates parallel edges between $w'$ and
$w*z'$.  If one of $w'$ and $w*z'$ is colored with $F$, then delete both of the
parallel edges and color the other endpoint with $I$.
(By Lemma~\ref{noFedge-lem}, at least one of $w'$ and $z'$ is not colored $F$. )
If neither $w'$ nor $w*z'$ is colored with $F$, then we delete one edge between
$w'$ and $w*z'$ and add $\frac{k-1}2$ $F$-neighbors to each of them.
(It is not possible that each of $w'$ and $w*z'$ ends with at least $k$
$F$-neighbors, so gets recolored $I$, since in that case $\rho_G(\{w,z,w',z'\})$
violates the Strong Gap Lemma.)

Now we must show that $\rho_{G'}(R')\ge -2$ for all $R'\subseteq V(G')$.  
If $R'\cap\{w',w*z'\}=\emptyset$, then $\rho_{G'}(R')=\rho_G(R')\ge 1$ by the
Weak Gap Lemma.  So, we assume that $R'\cap\{w',w*z'\}\ne\emptyset$.
We will compute $\rho_{G'}(R')-\rho_G((R'\setminus\{w*z'\})\cup\{w,z,w',z'\})$.
For convenience, let $\alpha:=-2C_{U,0}-C_{U,\ell}+C_I+3C_E$.
We have 5 cases to consider.  
\begin{enumerate}
\item
We added $F$-neighbors to both $w'$ and $w*z'$ and $|R'\cap\{w',w*z'\}|=2$.
Now $\rho_{G'}(R')-\rho_G((R'\setminus\{w*z'\})\cup\{w,z,w',z'\}) = 
-C_{U,0}-C_{U,\ell}+3C_E-3(k-1) = \alpha+C_{U,0}-C_I-3(k-1) =
\alpha+\frac{3C_E-3}2-\frac{C_E-3}2-(C_E-1)=\alpha+1$.
\item
We added $F$-neighbors to both $w'$ and $w*z'$ and $|R'\cap\{w',w*z'\}|=1$.
Now $\rho_{G'}(R')-\rho_G((R'\setminus\{w*z'\})\cup\{w,z,w',z'\}) \ge 
-2C_{U,0}-C_{U,\ell}+4C_E-\frac{3k-3}2 = \alpha+C_E-C_I-\frac{3k-3}2 =
\alpha+C_E-\frac{C_E-3}2-\frac{C_E-1}2=\alpha+2$.
\item
We moved $w'$ or $w*z'$ to $I$ and $|R'\cap\{w',w*z'\}|=2$.
Now $\rho_{G'}(R')-\rho_G((R'\setminus\{w*z'\})\cup\{w,z,w',z'\}) \ge 
-2C_{U,0}-C_{U,\ell}+C_I+3C_E = \alpha$.
\item
We moved $w'$ or $w*z'$ to $I$ and $R'$ contains the one we moved to $I$,
but not the other.
Now $\rho_{G'}(R')-\rho_G((R'\setminus\{w*z'\})\cup\{w,z,w',z'\}) \ge 
-2C_{U,0}-C_{U,\ell}-C_{F,1}+C_I+4C_E = \alpha-C_{F,1}+C_E \ge \alpha+3$.
\item
We moved $w'$ or $w*z'$ to $I$ and $R'$ contains the one we did not move to
$I$, but not the other.
Now $\rho_{G'}(R')-\rho_G((R'\setminus\{w*z'\})\cup\{w,z,w',z'\}) \ge 
-2C_{U,0}-C_{U,\ell}+4C_E = \alpha-C_I+C_E > \alpha$.
\end{enumerate}
Note that $\alpha = -2C_{U,0}-C_{U,\ell}+3C_E+C_I =
-3C_E+3-(C_E+2)+3C_E+\frac{C_E-3}2 = -\frac{C_E+1}2$.
Now, by the Strong Gap Lemma, $\rho_{G'}(R')\ge
\rho_G((R'\setminus\{w*z'\})\cup\{w,z,w',z'\})-\frac{C_E+1}2 =
\frac{C_E-3}2-\frac{C_E+1}2=-2$.
Thus, $G'$ again has an $(I,F_k)$-coloring $\vph'$.  
\smallskip

We will show how to
extend $\vph'$ to $G$ (after possibly modifying it a bit).  
We first extend $\vph'$ to an $(I,F_k)$-coloring $\vph$ of
$G-y$ by uncontracting the two edges incident to $z$, coloring both $w$ and
$z'$ with $\vph'(w\!*\!z')$, and coloring $z$ with the opposite color.

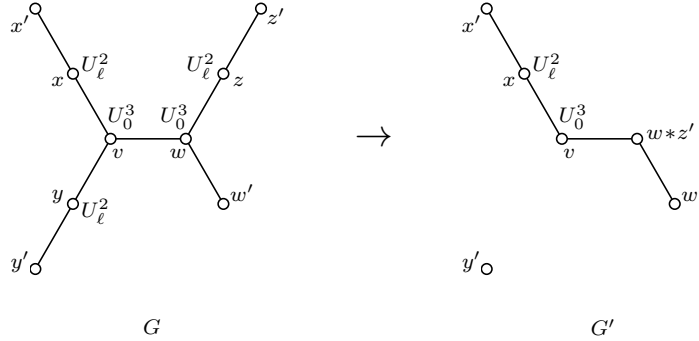
\begin{figure}[!h]
\centering
\begin{tikzpicture}[semithick]
\tikzset{every node/.style=uStyle}
\tikzstyle{l2Style}=[shape = rectangle, draw=none, inner sep=0pt, outer sep=0pt]

\draw (0,0) node (v) {} -- (1,0) node (w) {} (v) -- ++(120:1cm) node (x) {} --
++ (120:1cm) node (x') {} (v) -- ++ (240:1cm) node (y) {} -- ++(240:1cm) node
(y') {} (w) --++(60:1cm) node (z) {} --++(60:1cm) node (z') {} (w) --++(300:1cm)
node (w') {};
\draw (v) ++(.1cm,-.2cm) node[l2Style] {\scriptsize{$v$}};
\draw (v) ++(.15cm,.3cm) node[l2Style] {\scriptsize{$U_0^3$}};
\draw (w) ++(-.1cm,-.2cm) node[l2Style] {\scriptsize{$w$}};
\draw (w) ++(-.15cm,.3cm) node[l2Style] {\scriptsize{$U_0^3$}};
\draw (x) ++(-.2cm,-.1cm) node[l2Style] {\scriptsize{$x$}};
\draw (x) ++(.3cm,.1cm) node[l2Style] {\scriptsize{$U^2_{\ell}$}};
\draw (x') ++(-.2cm,-.2cm) node[l2Style] {\scriptsize{$x'$}};
\draw (y) ++(-.2cm,.1cm) node[l2Style] {\scriptsize{$y$}};
\draw (y) ++(.3cm,-.1cm) node[l2Style] {\scriptsize{$U^2_{\ell}$}};
\draw (y') ++(-.2cm,.1cm) node[l2Style] {\scriptsize{$y'$}};
\draw (z) ++(.2cm,-.1cm) node[l2Style] {\scriptsize{$z$}};
\draw (z) ++(-.3cm,.1cm) node[l2Style] {\scriptsize{$U^2_{\ell}$}};
\draw (z') ++(.2cm,-.1cm) node[l2Style] {\scriptsize{$z'$}};
\draw (w') ++(.25cm,.15cm) node[l2Style] {\scriptsize{$w'$}};
\draw (.55cm,-2.5) node[l2Style] {\scriptsize{$G$}};

\draw (3.5,0) node[lStyle] {\Large{$\rightarrow$}};

\begin{scope}[xshift=6cm]
\draw (0,0) node (v) {} -- (1,0) node (w) {} (v) -- ++(120:1cm) node (x) {} --
++ (120:1cm) node (x') {} (v) ++(240:2cm) node
(y') {} (w) --++(300:1cm) node (w') {};
\draw (v) ++(.1cm,-.2cm) node[l2Style] {\scriptsize{$v$}};
\draw (v) ++(.15cm,.3cm) node[l2Style] {\scriptsize{$U_0^3$}};
\draw (w) ++(.45cm,.1cm) node[l2Style] {\scriptsize{$w\!*\!z'$}};
\draw (x) ++(-.2cm,-.1cm) node[l2Style] {\scriptsize{$x$}};
\draw (x) ++(.3cm,.1cm) node[l2Style] {\scriptsize{$U^2_{\ell}$}};
\draw (x') ++(-.2cm,-.2cm) node[l2Style] {\scriptsize{$x'$}};
\draw (y') ++(-.2cm,.1cm) node[l2Style] {\scriptsize{$y'$}};
\draw (z) ++(.2cm,-.1cm) node[l2Style] {\scriptsize{$z$}};
\draw (z) ++(-.3cm,.1cm) node[l2Style] {\scriptsize{$U^2_{\ell}$}};
\draw (z') ++(.2cm,-.1cm) node[l2Style] {\scriptsize{$z'$}};
\draw (w') ++(.25cm,.15cm) node[l2Style] {\scriptsize{$w'$}};
\draw (.55cm,-2.5) node[l2Style] {\scriptsize{$G'$}};
\end{scope}
\end{tikzpicture}
\caption{Forming $G'$ from $G$ in the proof of
Lemma~\ref{noadj3swith32nbrs-lem}.\label{noadj3swith32nbrs-fig}}
\end{figure}

Suppose that $\vph(y')=I$.  
If $\vph(v)=I$, then we color $y$ with $F$ and are done.  So assume $\vph(v)=F$.
If $\vph(w)=\vph(x)=I$, then we again color $y$ with $F$ and are done.  
If $\vph(w)=\vph(x)=F$, then we recolor $v$ with $I$ and are done as above.  
So assume
that exactly one of $w$ and $x$ uses $I$ in $\vph$ and the other uses $F$. 
First suppose that $\vph(w)=I$ and $\vph(x)=F$.  If $\vph(x')=I$, then we color
$y$ with $F$ and are done.  Instead assume that $\vph(x')=F$.  Now we recolor
$x$ with $I$ and color $y$ with $F$.  Thus, we assume instead that
$\vph(w)=F$ and $\vph(x)=I$.  If both neighbors of $w$ other than $v$ are
colored $I$, then we color $y$ with
$F$ and are done.  So assume that $z$ is the only neighbor of $w$ colored $I$.
Let $s_1$ and $s_2$ denote the orders of the $F$-components of $\vph$ that
contain $w$ and $z'$, respectively.  If $s_1\le k-(\ell+1)$, then we color $y$
with $F$.  If $s_2\le k-(\ell+1)$, then we recolor $z$ with $F$, recolor $w$
with $I$, and color $y$ with $F$.  
The key observation is that one of these two inequalities
must hold.  Suppose not.  The $F$-component in $\vph'$ containing $w*z$
shows that $k\ge s_1+s_2-1$.  If both inequalities above fail, then $k\ge
s_1+s_2-1 \ge (k-(\ell+1)+1)+(k-(\ell+1)+1)-1 = 2k-2\ell-1 = 2k-(k-3)-1=k+2$,
which is a contradiction.

Suppose instead that $\vph(y')=F$.
If $\vph(v)=F$, then we color $y$ with $I$ and are done.  Assume instead that
$\vph(v)=I$.  First suppose $w'$ and $z$ are colored $I$.  Now
recolor $v$ with $F$ and color $y$ with $I$; finally, if $x'$ is colored $F$,
then recolor $x$ with $I$.  
This gives an $(I,F_k)$-coloring of $G$. 
Suppose instead that $w'$ is colored $F$.  
Let $s_1$ and $s_2$ denote the orders of the $F$-components of $\vph$ that
contain $w$ and $z'$, respectively.  
Suppose that $s_1\le k-(\ell+2)$. 
Color $y$ with $I$, recolor $v$ with $F$, and if $\vph(x')=F$, then recolor $x$ with $I$.  
This gives an $(I,F_k)$-coloring of $G$. 
Suppose instead that $s_2\le k-(\ell+1)$. 
Again color $y$ with $I$, recolor $v$ with $F$, and if $\vph(x')=F$, then recolor $x$ with $I$.  
Finally, recolor $w$ with $I$ and recolor $z$ with $F$.
Again, this gives an $(I,F_k)$-coloring of $G$. 
The key observation is that one of these two inequalities
must hold; the proof is identical to that in the previous paragraph, except that
the first inequality is tighter by 1.
\end{proof}

\subsection{Discharging when $k$ is Odd}
\label{k-odd-discharging-sec}
Now we use discharging to show that $G$ cannot exist.
It is helpful to remember that $I=\emptyset$, by Lemma~\ref{noI-lem}, and
$U_j^2=\emptyset$ when $j<\ell$, by Lemma~\ref{getting rid of very small charge}. 
Furthermore, by Lemma~\ref{delta2-lem}, each $v\in V(G)$ satisfies $d(v)\ge 2$
unless $v\in F_j$ with $j\ge \frac{k+3}2$.  
We define our initial charge function so that our assumption $\rho(V(G))\ge -2$
gives an upper bound on the sum of the initial charges. 
(Recall the values of $C_{U,j}$ and
$C_{F,j}$ from Definition~\ref{coeff-defn}.) Precisely, let

\begin{itemize}
\item $\ch(v):=C_Ed(v)-2C_{U,j}=C_Ed(v)-2(\frac{3C_E-3}2-3j)$ \aside{$\ch(v)$}\\
\mbox{\!~~~~~~~~~}$= C_E(d(v)-3)+3+6j$ for each $v\in U_j$; and
\item $\ch(v):=C_Ed(v)-2C_{F,j}=C_Ed(v)-2(C_E-3j)$ \\
\mbox{\!~~~~~~~~~}$=C_E(d(v)-2)+6j$ for each $v\in F_j$ with $j\le \frac{k+1}2$; and
\item $\ch(v):=C_Ed(v)-2C_{F,j}\ge C_Ed(v)-2(3k-\frac{3k+9}2)$ \\
\mbox{\!~~~~~~~~~}$=C_Ed(v)-3k+9=C_E(d(v)-1)+7$ for each $v\in F_j$ with $j\ge \frac{k+3}2$.
\end{itemize}

This definition of $\ch(v)$ yields the inequality
\begin{align}
\sum_{v\in V(G)}\ch(v) = -2\rho(V(G))\le 4.
\label{charge-sum-ineq-odd}
\end{align}

\begin{table}[!h]
\centering
$
\begin{array}{c||c|c|c|c|c|c|c|c}
d(v) & U_0 & U_1 & U_2 & F_1 & F_2 & U_{\ell} & U_{\ell+1} & U_{\ell+2}\\
\hline
2 &   &   &   & 2 & 8 & 0 & 0 & 4\\
3 & 0 & 3 & 9 & C_E & C_E+6\\
4 & C_E-5 & C_E+1 & C_E+7 & 2C_E-2 
\end{array}
$
\caption{%
Lower bounds on the final charges (when $k$ is odd).\label{k-odd-charge-table}}
\end{table}
\bigskip

We use two discharging rules, and let \Emph{$\ch^*(v)$} denote the charge at $v$
after discharging.
\begin{enumerate}
\item[(R1)] Each $v\in U_\ell^2$ (2-vertex) takes 2 from each neighbor.
\item[(R2)] Each $v\in U_0^3$ (3-vertex) with two neighbors in $U_\ell^2$ takes 1 from
its other neighbor.
\end{enumerate}

\begin{lem}
After discharging with rules (R1) and (R2) above, each vertex $v$ with an entry in 
Table~\ref{k-odd-charge-table} has $\ch^*(v)$ at least as large charge as shown.  Each
other vertex $v$ has $\ch^*(v)\ge 5$.
\label{easy-charge-lem}
\end{lem}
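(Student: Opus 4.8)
The plan is to follow the proof of Lemma~\ref{k-even-easy-charge-lem}, but now tracking both discharging rules (R1) and (R2). Throughout I use the three facts recalled just before Table~\ref{k-odd-charge-table}: $I=\emptyset$, $U_j^2=\emptyset$ for $j<\ell$, and $d(v)\ge 2$ unless $v\in F_j$ with $j\ge\frac{k+3}2$; I also use the identities $6\ell=C_E-7$ and $C_E=3k-2$ freely. The starting observation is that no vertex receives charge from a neighbor, except possibly a vertex of $U_\ell^2$ (through (R1)) or a vertex of $U_0^3$ with two neighbors in $U_\ell^2$ (through (R2)); and each neighbor of a vertex $v$ takes at most $2$ from $v$, taking exactly $2$ only when it lies in $U_\ell^2$ and at most $1$ otherwise (the two sets $U_\ell^2$ and $U_0^3$ being disjoint). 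Hence $\ch^*(v)\ge \ch(v)-2d(v)$ for every $v$. I would then split into three cases.

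First, $v\in U_\ell^2$. By Lemma~\ref{no2-threads-k-odd}, $v$ has no neighbor in $U_\ell^2$, and since the recipient under (R2) is never in $U_\ell^2$, $v$ pays out nothing; it receives $2$ from each of its two neighbors via (R1). As $\ch(v)=-C_E+3+6\ell=-4$, this gives $\ch^*(v)=0$. Second, $v\in U_0^3$, so $\ch(v)=3$. By Lemma~\ref{no3withall2s-lem}, $v$ has at most two neighbors in $U_\ell^2$. If it has exactly two, it pays $2$ to each of them via (R1) and receives $1$ from its third neighbor via (R2); it pays nothing more, since if $v$ were the designated neighbor of some $w\in U_0^3$ with two $U_\ell^2$-neighbors then, $v$ itself having a $U_\ell^2$-neighbor, the adjacent pair $\{w,v\}$ would violate Lemma~\ref{noadj3swith32nbrs-lem}; thus $\ch^*(v)=3-4+1=0$. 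If $v$ has exactly one neighbor in $U_\ell^2$, the same appeal to Lemma~\ref{noadj3swith32nbrs-lem} shows $v$ pays nothing via (R2), so $\ch^*(v)=3-2=1$. If $v$ has no $U_\ell^2$-neighbor, then $v$ pays at most $1$ to each of its three neighbors via (R2) and nothing via (R1), so $\ch^*(v)\ge 0$.

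Third, every remaining $v$, i.e.\ $v\notin U_\ell^2\cup U_0^3$. Such a $v$ receives no charge, so $\ch^*(v)\ge \ch(v)-2d(v)$; moreover for $v\in U_{\ell+1}^2$ this improves to $\ch^*(v)\ge \ch(v)-2$, since Lemma~\ref{no2-threads-k-odd} forbids $v$ from having a $U_\ell^2$-neighbor, whence $v$ pays only via (R2). It remains to substitute the three initial-charge formulas into these bounds and compare with the table. The degree constraint gives $d(v)\ge 2$ whenever $v$ is not in some $F_j$ with $j\ge\frac{k+3}2$; for $v\in U_j^2$ we moreover have $j\ge\ell+1$ (and $j\ge\ell+2$ unless $v\in U_{\ell+1}^2$), and a vertex $v\in U_0$ in this case has $d(v)\ge 4$. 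One then checks the routine inequalities: for $v\in U_j$ with $d(v)\ge 3$, $\ch^*(v)\ge (C_E-2)(d(v)-3)+6j-3$; for $v\in U_j^2$ with $j=\ell+2$, $\ch^*(v)\ge 8-4=4$, and with $j\ge\ell+3$, $\ch^*(v)\ge 10$; for $v\in U_{\ell+1}^2$, $\ch^*(v)\ge 2-2=0$; for $v\in F_j$ with $j\le\frac{k+1}2$, $\ch^*(v)\ge (C_E-2)(d(v)-2)+6j-4$; and for $v\in F_j$ with $j\ge\frac{k+3}2$, $\ch^*(v)\ge (C_E-2)(d(v)-1)+5\ge 5$. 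In each instance the minimum is attained exactly at the corresponding entry of Table~\ref{k-odd-charge-table} (using $6\ell=C_E-7$), and every vertex with no table entry receives at least $5$.

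The only genuinely delicate step is the $U_0^3$ case: there one must simultaneously credit the charge such a vertex receives under (R2) and use Lemma~\ref{noadj3swith32nbrs-lem} to rule out that it also pays out under (R2); the rest is bookkeeping against the table, exactly as in the even case.
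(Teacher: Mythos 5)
Your proposal is correct and follows essentially the same route as the paper: the uniform bound $\ch^*(v)\ge \ch(v)-2d(v)$ for generic vertices, and the same three special cases $U_\ell^2$, $U_{\ell+1}^2$, and $U_0^3$ handled via Lemmas~\ref{no2-threads-k-odd}, \ref{no3withall2s-lem}, and \ref{noadj3swith32nbrs-lem} (the paper phrases the $U_0^3$ case as $\ch^*(v)\ge\min\{3-2,\,3-2(2)+1,\,3-3(1)\}=0$, which is exactly your three subcases). The arithmetic against Table~\ref{k-odd-charge-table} checks out.
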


\begin{proof}
Note that $\ch^*(v)\ge \ch(v)-2d(v)$ for all $v\in V(G)$.
If $v\in U_j$, then $\ch^*(v)\ge C_E(d(v)-3)+3+6j-2d(v)=(C_E-2)(d(v)-3)+6j-3$.
If $v\in F_j$ and $j\le \frac{k+1}2$, then $\ch^*(v) \ge
C_E(d(v)-2)+6j-2d(v)=(C_E-2)(d(v)-2)+6j-4$.
If $v\in F_j$ and $j\ge \frac{k+3}2$, then $\ch^*(v)\ge
C_E(d(v)-1)+7-2d(v)=(C_E-2)(d(v)-1)+5$.
If $v\notin U_{\ell}^2\cup U_{\ell+1}^2\cup U_0^3$, 
then the lemma follows from what is above.

If $v\in U_{\ell}^2$, then $v$ has no neighbors in $U_{\ell}^2\cup
U^2_{\ell+1}$, by Lemma~\ref{no2-threads-k-odd}.  Thus, $\ch^*(v)=-4+2(2)=0$.
If $v\in U_{\ell+1}^2$, then $v$ has no neighbors in $U^2_{\ell}$, by
Lemma~\ref{no2-threads-k-odd}.  Thus, $\ch^*(v)\ge 2-2(1)=0$.  Finally, suppose that
$v\in U_0^3$.  By Lemma~\ref{no3withall2s-lem}, $v$ does not have three
neighbors in $U^2_{\ell}$.  A vertex in $U_0^3$ is \Emph{needy} if it has two
neighbors in $U^2_{\ell}$.  By Lemma~\ref{noadj3swith32nbrs-lem}, a vertex in
$U_0^3$ cannot have both a neighbor in $U^2_{\ell}$ and a needy 3-neighbor. 
Thus, we have $\ch^*(v)\ge \min\{3-2,3-2(2)+1,3-3(1)\}=0$.
\end{proof}

\begin{cor}
$V(G)=U^2_{\ell}\cup U^2_{\ell+1}\cup U^2_{\ell+2}\cup F_1^2\cup U_0^3\cup U_1^3\cup U_0^4$.
Furthermore $4|U^2_{\ell+2}|+2|F_1^2|+3|U_1^3|+(C_E-5)|U_0^4|\le 4$.  (In
particular, $U_0^4=\emptyset$ when $k\ge 5$.)
\label{few-vertex-types-cor}
\end{cor}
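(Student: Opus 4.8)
The plan is to read this corollary off directly from Lemma~\ref{easy-charge-lem} together with the charge inequality \eqref{charge-sum-ineq-odd}, using only the elementary fact that discharging conserves the total charge. First I would observe that rules (R1) and (R2) merely transfer charge between vertices, so $\sum_{v\in V(G)}\ch^*(v)=\sum_{v\in V(G)}\ch(v)$, which is at most $4$ by \eqref{charge-sum-ineq-odd}. Next I would note, from Lemma~\ref{easy-charge-lem}, that every final charge is nonnegative: the entries of Table~\ref{k-odd-charge-table} are all at least $0$, and every vertex not appearing in the table has $\ch^*(v)\ge 5$. Combining these two facts, no vertex can have $\ch^*(v)\ge 5$ — such a vertex alone would push the total above $4$ — so every vertex of $G$ has a (degree, precoloring-class) pair that occurs in Table~\ref{k-odd-charge-table}.

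Then I would prune the table further. The pairs $F_2^2$, $U_2^3$, $F_1^3$, $F_2^3$, $U_1^4$, $U_2^4$, and $F_1^4$ all carry final charge strictly greater than $4$ (one just substitutes $C_E=3k-2\ge 7$ into the table entries), so a single vertex in any of these classes would again force $\sum_{v}\ch^*(v)>4$, a contradiction. The only classes that can be nonempty are therefore $U^2_\ell$, $U^2_{\ell+1}$, $U^2_{\ell+2}$, $F_1^2$, $U_0^3$, $U_1^3$, and $U_0^4$; since these seven classes are pairwise disjoint, this yields the claimed identity $V(G)=U^2_{\ell}\cup U^2_{\ell+1}\cup U^2_{\ell+2}\cup F_1^2\cup U_0^3\cup U_1^3\cup U_0^4$.

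For the weighted inequality I would again use that all final charges are nonnegative, now together with the lower bounds from Table~\ref{k-odd-charge-table}: a vertex of $U^2_{\ell+2}$, $F_1^2$, $U_1^3$, or $U_0^4$ has $\ch^*(v)\ge 4,\ 2,\ 3,\ C_E-5$ respectively, while the remaining surviving classes $U^2_\ell,U^2_{\ell+1},U_0^3$ contribute at least $0$. Hence $4\ge\sum_{v\in V(G)}\ch^*(v)\ge 4|U^2_{\ell+2}|+2|F_1^2|+3|U_1^3|+(C_E-5)|U_0^4|$, which is exactly the stated bound. The parenthetical remark then follows because $C_E-5=3k-7>4$ whenever $k\ge 5$, so the inequality forces $|U_0^4|=0$ in that range.

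There is no real obstacle here: the corollary is a bookkeeping consequence of the preceding lemma. The only two points that need a moment's care are (i) making explicit that discharging preserves the global charge sum, so that \eqref{charge-sum-ineq-odd} — an inequality about \emph{initial} charges — transfers verbatim to the \emph{final} charges; and (ii) confirming that every entry of Table~\ref{k-odd-charge-table} is genuinely nonnegative, which is what makes the bound $\sum_v\ch^*(v)\le 4$ strong enough to forbid a single vertex of final charge at least $5$ (and, in the second step, at least $5$-versus-$4$ for the ``expensive'' listed classes).
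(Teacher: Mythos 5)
Your proof is correct and is exactly the paper's argument: the paper's own proof of this corollary is the one-line observation that it ``follows directly from Lemma~\ref{easy-charge-lem} and~\eqref{charge-sum-ineq-odd},'' and your write-up simply makes explicit the bookkeeping (conservation of charge under (R1)--(R2), nonnegativity of all table entries since $C_E-5=3k-7\ge 2$, and the exclusion of the classes with final charge exceeding $4$) that the paper leaves implicit.
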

\begin{proof}
This corollary follows directly from Lemma~\ref{easy-charge-lem}
and~\eqref{charge-sum-ineq-odd}.
\end{proof}

If we knew that $\sum_{v\in V(G)}\ch(v)<0$, then Lemma~\ref{easy-charge-lem}
would yield a contradiction.  However, we only know that $\sum_{v\in
V(G)}\ch(v)\le 4$, so we are not done yet.  We will now try to construct the
desired coloring.  We show that we can do this unless $\sum_{v\in V(G)}\ch(v)>
4$, which gives the desired contradiction.  Our basic plan is to color all of 
$U^2_{\ell}$ with $I$.  This will force all neighbors of $U^2_{\ell}$ into $F$.
Furthermore, all but a constant number of vertices in $V(G)\setminus
U^2_{\ell}$ will go into $F$.  To do this, we consider the components of
$G\setminus U^2_{\ell}$.  All but a constant number of these have size at most
4, and all have size at most 8.

\begin{lem}
Each component of $G\setminus U^2_{\ell}$ is one of the 30 shown below in
Figures~\ref{cases1234-fig}-\ref{case6T678-fig}, and has final charge as shown.
 (The coloring of vertices as black and white can be ignored for now.)
\end{lem}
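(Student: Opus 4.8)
The plan is to combine the structural lemmas already proved with a componentwise charge count on $H:=G\setminus U^2_\ell$.

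First I would recall, from Corollary~\ref{few-vertex-types-cor}, that every vertex of $G$ lies in $U^2_\ell\cup U^2_{\ell+1}\cup U^2_{\ell+2}\cup F^2_1\cup U^3_0\cup U^3_1\cup U^4_0$, and that the vertices in $U^2_{\ell+2}\cup F^2_1\cup U^3_1\cup U^4_0$, which I call \emph{special}, number at most two (with $U^4_0=\emptyset$ unless $k=3$). For $v\in V(H)$ write $i_v$ for the number of neighbors of $v$ in $U^2_\ell$; Lemma~\ref{no2-threads-k-odd} gives $i_v=0$ for $v\in U^2_{\ell+1}$, and Lemma~\ref{no3withall2s-lem} gives $i_v\le 2$ for $v\in U^3_0$, so every non-special $v\in V(H)$ has $1\le d_H(v)=d_G(v)-i_v$. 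In particular an isolated vertex of $H$ must be special.

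Next comes the key charge identity. Rule (R1) moves charge only to vertices of $U^2_\ell$, which lie outside $H$, whereas (R2) moves charge between a needy $3$-vertex and its unique neighbor in $H$, which lie in the same component. Hence, for each component $T$ of $H$,
\[
\ch^*(T):=\sum_{v\in V(T)}\ch^*(v)=\sum_{v\in V(T)}\bigl(\ch(v)-2i_v\bigr).
\]
Substituting the initial charges, a vertex in $U^2_{\ell+1}$ contributes $2$, a vertex in $U^3_0$ with $i_v\in\{0,1,2\}$ contributes $3,1,-1$ respectively, and the special vertices contribute $8-2i_v,\,6-2i_v,\,9-2i_v,\,10-2i_v$ for $U^2_{\ell+2},\,F^2_1,\,U^3_1,\,U^4_0$. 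By Lemma~\ref{easy-charge-lem} every vertex has $\ch^*(v)\ge 0$, so $\ch^*(T)\ge 0$; and since $\ch^*(v)=0$ for $v\in U^2_\ell$, inequality~\eqref{charge-sum-ineq-odd} yields $\sum_T\ch^*(T)\le 4$. Thus $0\le\ch^*(T)\le 4$ for every component $T$.

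Then I would bound $|V(T)|$. Writing $c(T):=|E(T)|-|V(T)|+1\ge 0$ and summing $d_H(v)=2|E(T)|$ over $V(T)$ gives, for a special-free component, $n_0-n_2=2c(T)-2$, where $n_i$ counts the vertices of $T$ in $U^3_0$ with $i_v=i$; together with the charge formula and $0\le\ch^*(T)\le 4$ this forces $c(T)\le 1$ and $|V(T)|\le 8$ (indeed $|V(T)|\le 4$ when $\ch^*(T)=0$), and the analogous computation with one or two special vertices inserted again gives $|V(T)|\le 8$. So only finitely many combinatorial types survive, indexed by the multiset of vertex types occurring in $T$, the values $i_v$, and $c(T)$. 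For each surviving type I would then pin down $T$ up to isomorphism: a needy $U^3_0$-vertex is a leaf of $H$ whose unique $H$-neighbor is forced; Lemma~\ref{noadj3swith32nbrs-lem} forbids a $U^3_0$-vertex with a neighbor in $U^2_\ell$ from being adjacent to a needy $3$-vertex; Lemma~\ref{no2-threads-k-odd} forbids a $U^2_\ell$-vertex adjacent to a $U^2_{\ell+1}$-vertex; and $G$ is simple. Working through the cases with these constraints should yield precisely the $30$ graphs in Figures~\ref{cases1234-fig}--\ref{case6T678-fig}, the final charge attached to each being immediate from $\ch^*(T)=\sum_{v\in V(T)}(\ch(v)-2i_v)$. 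The hard part will be purely the bookkeeping — the case split over $c(T)$, over the placement of the at most two special vertices (including the $k=3$ exception where $U^4_0\ne\emptyset$), and over the distribution of the $i_v$ generates many subcases, and one must check both that each yields a listed graph and that no listed graph is missed.
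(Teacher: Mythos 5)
Your setup is sound and is in substance the same as the paper's. The per-component charge identity $\ch^*(T)=\sum_{v\in V(T)}(\ch(v)-2i_v)$ is correct (R2 transfers stay inside a component of $G\setminus U^2_\ell$, since a needy $3$-vertex's third neighbor cannot lie in $U^2_\ell$ by Lemma~\ref{no3withall2s-lem}), the bound $0\le \ch^*(T)\le 4$ follows from Lemma~\ref{easy-charge-lem}, \eqref{charge-sum-ineq-odd}, and $\ch^*(v)=0$ on $U^2_\ell$, and your Euler count $n_0-n_2=2c(T)-2$ reproduces the paper's formula $\ch^*(T)=n_3+2n_2-4$ for tree components (the paper gets it by suppressing the degree-$2$ vertices of $U^2_{\ell+1}$ to form $T'$ and counting the $|T'|+2$ external edges, which is the same computation). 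Your count of at most two ``special'' vertices also checks out against Corollary~\ref{few-vertex-types-cor}, and the constraints you propose to use in the enumeration — Lemmas~\ref{no2-threads-k-odd}, \ref{no3withall2s-lem}, \ref{noadj3swith32nbrs-lem}, simplicity of $G$ — are exactly the ones the paper uses.

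The gap is that you never perform the enumeration, and the enumeration \emph{is} the lemma: the statement asserts that the components are precisely the $30$ graphs in Figures~\ref{cases1234-fig}--\ref{case6T678-fig}, so ``working through the cases should yield precisely the $30$ graphs'' is a promissory note for essentially all of the content. The paper's proof is almost entirely this case analysis: one case for each special vertex type ($U^4_0$, $U^2_{\ell+2}$, $F^2_1$, $U^3_1$), one for components with a cycle (where $c(T)=1$ forces $|C|\le 4$ and pins down four configurations), and then, for trees in $U^2_{\ell+1}\cup U^3_0$, a sub-case for each $|T'|\in\{2,\dots,8\}$ in which Lemma~\ref{noadj3swith32nbrs-lem} (no needy $3$-vertex adjacent to a $U^3_0$-vertex with a $U^2_\ell$-neighbor) repeatedly forces which edges of $T'$ must be subdivided by $U^2_{\ell+1}$-vertices. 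Some of these sub-cases are not purely mechanical — e.g.\ for $|T'|\in\{5,6\}$ one must rule out certain subcubic trees outright — so the bookkeeping cannot simply be asserted. To complete the proof you must carry out this finite case analysis and verify both directions: every surviving configuration appears in the figures with the stated final charge, and nothing else survives.
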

\begin{proof}
Let $J$ be a component of $G\setminus U_{\ell}^2$.  Let $\ch^*(J):=\sum_{v\in
V(J)}\ch^*(v)$.  We will prove that if $J$ is some component other than one of
those shown, then either $G$ contains a reducible configuration or $\ch^*(J)>4$;
both possibilities yield a contradiction.

\textbf{Case 1: $\boldsymbol{V(J)\cap U_0^4\ne \emptyset}$.}
(By Corollary~\ref{few-vertex-types-cor}, this is possible only when $k=3$.)
Assume $v\in V(J)\cap U_0^4$.  If $V(J)=\{v\}$, then we are done.  Otherwise,
$\ch^*(v)\ge 3$.  So, by Table~\ref{k-odd-charge-table}, we know $V(J)\setminus
\{v\}\subseteq U_0^3\cup U_{\ell+1}^2$.  Let $w$ be a neighbor of $v$ in $J$.  If
$w\in U_{\ell+1}^2$, then $\ch^*(J)\ge \ch^*(v)+\ch^*(w)\ge 4+1$, a contradiction. 
The same is true if $w\in U_0^3$ unless $w$ is needy (recall that $w$ cannot
have both a neighbor in $U_{\ell}^2$ and a needy 3-neighbor, by
Lemma~\ref{noadj3swith32nbrs-lem}).  If $v$ has at most two needy 3-neighbors,
then we are done.  Otherwise, $\ch^*(v)\ge 5$, a contradiction.

\textbf{Case 2: $\boldsymbol{V(J)\cap U_{\ell+2}^2\ne \emptyset}$.}
Assume $v\in V(J)\cap U_{\ell+2}^2$.
If $V(J)=\{v\}$, then we are done.  Otherwise, $\ch^*(v)\ge 5$, a contradiction.

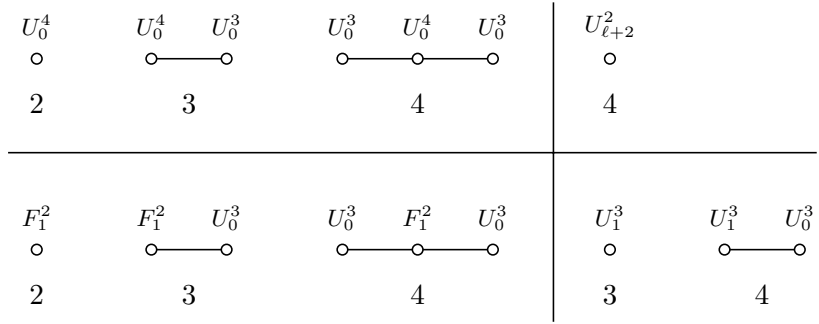
\begin{figure}[!htb]
\centering
\begin{tikzpicture}[semithick]
\tikzstyle{IStyle}=[shape = circle, minimum size = 6.5pt, draw=none, fill=white,
outer sep=0pt, inner sep=2.5pt]
\begin{scope}[xshift=.15in]
\draw node[u04Style] {};
\draw (0,-.6) node[lStyle] {2};
\end{scope}

\begin{scope}[xshift=.75in]
\draw (0,0) node[u04Style] {} -- (1,0) node[u03Style] {};
\draw (.5,-.6) node[lStyle] {3};
\end{scope}

\begin{scope}[xshift=1.75in]
\draw (0,0) node[u03Style] {} -- (1,0) node[u04Style] {} -- (2,0) node[u03Style] {};
\draw (1,-.6) node[lStyle] {4};
\end{scope}

\begin{scope}[xshift=3.15in]
\draw node[u22Style] {};
\draw (0,-.6) node[lStyle] {4};
\end{scope}

\draw[] (0,-1.25) -- (10.75,-1.25) (7.25,.75) -- (7.25,-3.5);

\begin{scope}[xshift=.15in, yshift=-1in]
\draw node[f12Style] {};
\draw (0,-.6) node[lStyle] {2};
\end{scope}

\begin{scope}[xshift=.75in, yshift=-1in]
\draw (0,0) node[f12Style] {} -- (1,0) node[u03Style] {};
\draw (.5,-.6) node[lStyle] {3};
\end{scope}

\begin{scope}[xshift=1.75in, yshift=-1in]
\draw (0,0) node[u03Style] {} -- (1,0) node[f12Style] {} -- (2,0) node[u03Style] {};
\draw (1,-.6) node[lStyle] {4};
\end{scope}

\begin{scope}[xshift=3.15in, yshift=-1in]
\draw node[u13Style] {};
\draw (0,-.6) node[lStyle] {3};
\end{scope}

\begin{scope}[xshift=3.75in, yshift=-1in]
\draw (0,0) node[u13Style] {} -- (1,0) node[u03Style] {};
\draw (.5,-.6) node[lStyle] {4};
\end{scope}

\end{tikzpicture}
\caption{The 9 possible components of $G\setminus U_0^2$ in Cases 1--4.\label{cases1234-fig}}
\end{figure}

\textbf{Case 3: $\boldsymbol{V(J)\cap F_1^2\ne \emptyset}$.}
Assume $v\in V(J)\cap F_1^2$.  If $V(J)=\{v\}$, then we are done.  Otherwise,
let $w$ be a neighbor of $v$ in $J$. If $w\in U_{\ell+1}^2$, then $\ch^*(J)\ge
\ch^*(v)+\ch^*(w)\ge 4+1$, a contradiction.  Thus, we must have $w\in U_0^3$.
If $w$ is not needy, then $\ch^*(v)+\ch^*(w)\ge 4+1$, a contradiction.
Thus, $v$ has one or two needy neighbors (and this is all of $J$).

\textbf{Case 4: $\boldsymbol{V(J)\cap U_1^3\ne \emptyset}$.}
Assume $v\in V(J)\cap U_1^3$.  If $V(J)=\{v\}$, then we are done.  Otherwise,
let $w$ be a neighbor of $v$.  If $w$ is not a needy 3-neighbor of $v$, then
$\ch^*(v)\ge 5$, a contradiction.  Further, $v$ has at most one needy
3-neighbor.  Thus, we are done.

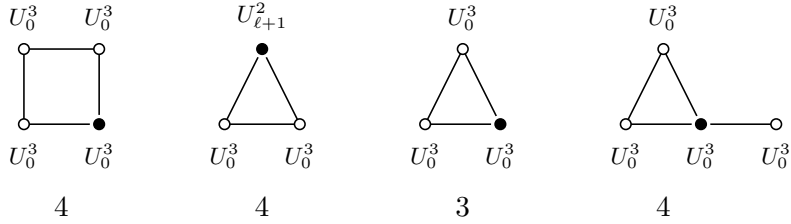
\begin{figure}[!htb]
\centering
\begin{tikzpicture}[semithick]
\tikzstyle{IStyle}=[shape = circle, minimum size = 6.5pt, draw=none, fill=white,
outer sep=0pt, inner sep=2.5pt]

\draw (0,0) node[u03Style] {} -- (1,0) node[u03Style] {} 
-- (1,-1) node[IStyle] {} node[u03LStyle,fill=black] {} 
-- (0,-1) node[u03LStyle] {} 
-- (0,0);
\draw (.5,-2.1) node[lStyle] {4};

\begin{scope}[xshift=1.05in]
\draw (.5,0) node[IStyle] {} node[u12Style,fill=black] {} 
-- (1,-1) node[u03LStyle] {}
-- (0,-1) node[u03LStyle] {}
-- (.5,0);
\draw (.5,-2.1) node[lStyle] {4};
\end{scope}

\begin{scope}[xshift=2.1in]
\draw (.5,0) node[u03Style] {} 
-- (1,-1) node[IStyle] {} node[u03LStyle,fill=black] {}
-- (0,-1) node[u03LStyle] {} 
-- (.5,0);
\draw (.5,-2.1) node[lStyle] {3};
\end{scope}

\begin{scope}[xshift=3.15in]
\draw (.5,0) node[u03Style] {} 
-- (1,-1) node[IStyle] {} node[u03LStyle,fill=black] {} 
-- (0,-1) node[u03LStyle] {} 
-- (.5,0) (1,-1) -- (2,-1) node[u03LStyle] {};
\draw (.5,-2.1) node[lStyle] {4};
\end{scope}
\end{tikzpicture}
\caption{The 4 possible components of $G\setminus U_{\ell}^2$ in Case 5, those
that have a cycle.\label{case5-fig}}
\end{figure}

\textbf{Case 5: $\boldsymbol{V(J)\subseteq U_{\ell+1}^2\cup U_0^3}$ and $J$ contains a
cycle.}
Let $C$ be a cycle in $J$; see Figure~\ref{case5-fig}.  It is easy to check that
each cycle vertex finishes with charge at least 1; thus $|C|\le 4$.  If $|C|=4$,
then each cycle vertex is in $U_0^3$ and has a neighbor in $U_{\ell}^2$.  Thus,
$J\cong C_4$.  Now suppose $|C|=3$.  If $C$ contains a vertex in $U_{\ell+1}^2$, then
it contains exactly one such vertex, and its other two vertices are in $U_0^3$,
each with a neighbor in $U_{\ell}^2$.  So $J\cong C_3$ (with a single vertex in
$U_{\ell+1}^2$).
So assume $C$ is a 3-cycle with all vertices in $U_0^3$.  If no vertex on $C$ has
a neighbor in $J\setminus C$, then we are done.  Otherwise, exactly one cycle
vertex does, and it is a needy 3-neighbor.

\textbf{Case 6: $\boldsymbol{V(J)\subseteq U_{\ell+1}^2\cup U_0^3}$ and $J$ is a tree.}
Let $T:=J$.  Let $n_2:=|U_{\ell+1}^2\cap V(T)|$ and $n_3:=|U_0^3\cap V(T)|$.
Recall that no vertex in $U_{\ell+1}^2$ has a neighbor in $U_{\ell}^2$, by
Lemma~\ref{no2-threads-k-odd}.  So each leaf of $T$ is in $U_0^3$.  Form $T'$
from $T$ by replacing paths with internal vertices in 
$U_{\ell+1}^2$ by edges. 
So $|V(T')|=n_3$.  Let $\ch^*(T'):=\ch^*(T)-2|U_{\ell+1}^2\cap V(T)|$.  Note that
$\ch^*(T')$ is precisely the sum of charges that would have ended on $T'$ if it
had appeared in $G$ when we did the discharging.  Since each vertex of $T'$ has
degree 3 in $G$, the number of edges (externally) incident to $T$ is
$3|T'|-\sum_{v\in T'}d_{T'}(v)=3|T'|-2(|T'|-1)=|T'|+2$.  Since $\ch(T')=3|T'|$, 
and $T'$ sends 2 along each incident edge,
we have $\ch^*(T')=3|T'|-2(|T'|+2)=|T'|-4=n_3-4$.  Since $\ch^*(T)\le 4$, we get that
$n_3\le 8$.  Recall that a vertex in $U_0^3$ with three neighbors in
$U_{\ell}^2$ is reducible, by Lemma~\ref{no3withall2s-lem}. So $n_3\ge 2$.
Note that $\ch^*(T)=n_3-4+2n_2\le 4$.  So $n_2 \le \frac{8-n_3}2$.
For brevity, we henceforth denote $|V(T')|$ by $|T'|$.  We consider the seven
possibilities when $|T'|\in\{2,\ldots,8\}$.

Suppose $|T'|=2$.  By Lemma~\ref{noadj3swith32nbrs-lem}, the edge of $T'$ must be
subdivided in $T$ by one or more vertices of $U_{\ell+1}^2$.  
We have $n_2\le 3$, which gives the 3 possibilities in Figure~\ref{case6T2-fig}.

\begin{figure}[!htb]
\centering
\begin{tikzpicture}[semithick]
\tikzstyle{IStyle}=[shape = circle, minimum size = 6.5pt, draw=none, fill=white,
outer sep=0pt, inner sep=2.5pt]
\draw (0,0) node[u03Style] {} -- (1,0) node[IStyle] {} node[u12Style, fill=black] {} -- (2,0) node[u03Style]
{};
\draw (1,-.6) node[lStyle] {0};

\begin{scope}[xshift=1.4in]
\draw (0,0) node[u03Style] {} -- (1,0) node[IStyle] {} node[u12Style, fill=black] {} -- (2,0) node[u12Style]
{} -- (3,0) node[u03Style] {};
\draw (1.5,-.6) node[lStyle] {2};
\end{scope}

\begin{scope}[xshift=3.2in]
\draw (0,0) node[u03Style] {} -- (1,0) node[IStyle] {} node[u12Style, fill=black] {} -- (2,0)
node[u12Style] {} -- (3,0) node[IStyle] {} node[u12Style, fill=black] {} -- (4,0) node[u03Style] {};
\draw (2,-.6) node[lStyle] {4};
\end{scope}

\end{tikzpicture}
\caption{The 3 possible components of $G\setminus U_0^2$ in Case 6 when
$|T'|=2$.\label{case6T2-fig}}
\end{figure}
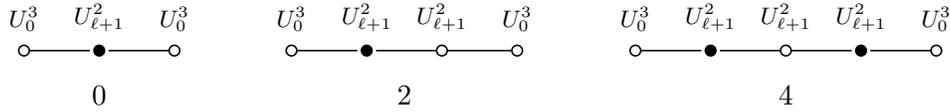

Suppose $|T'|=3$.  By Lemma~\ref{noadj3swith32nbrs-lem},  no vertex in $U_0^3$ has both
a needy 3-neighbor and a neighbor in $U_{\ell}^2$.  Thus, each edge of $T'$ must
be subdivided in $T$ by a vertex in $U_{\ell+1}^2$.  Recall that $n_2\le
\frac{8-n_3}2$.  So $n_3=3$ and $n_2=2$.

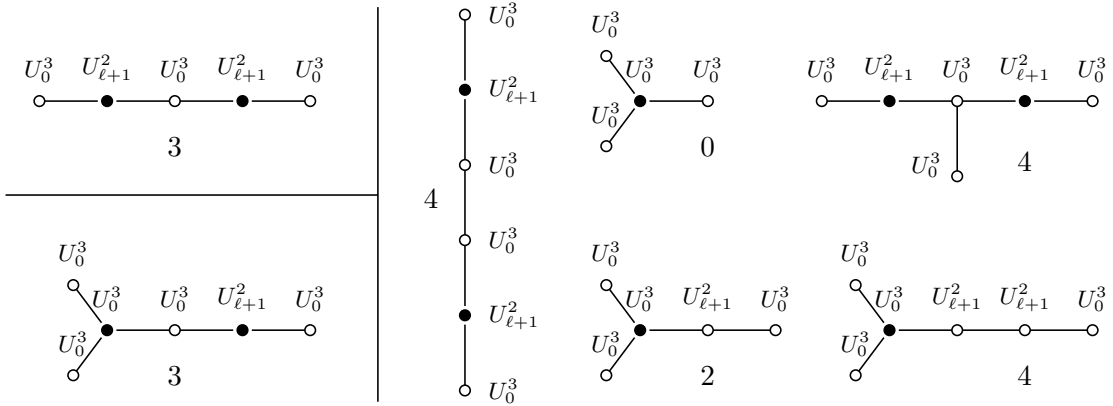
\begin{figure}[!t]
\centering
\begin{tikzpicture}[semithick, xscale=.9]
\tikzstyle{IStyle}=[shape = circle, minimum size = 6.5pt, draw=none, fill=white,
outer sep=0pt, inner sep=2.5pt]
\begin{scope}
\draw (0,0) node[u03Style] {} -- (1,0) node[IStyle] {} node[u12Style,fill=black] {} -- (2,0) node[u03Style
] {} -- (3,0) node[IStyle] {} node[u12Style,fill=black] {} -- (4,0) node[u03Style] {};
\draw (2,-.6) node[lStyle] {3};
\end{scope}

\draw[] (-.5,-1.25) -- (5.0,-1.25) (5.0,1.25) -- (5.0,-4.0);

\begin{scope}[yshift=-1.2in]
\draw 
(.5,.6) node[u03Style] {} -- (1,0) 
(.5,-.6) node[u03Style] {} -- (1,0) node[IStyle] {}  
node[u03Style,fill=black] {} -- (2,0) node[u03Style
] {} -- (3,0) node[IStyle] {} node[u12Style,fill=black] {} -- (4,0) node[u03Style] {};
\draw (2,-.6) node[lStyle] {3};
\end{scope}

\begin{scope}[xshift=1in]
\begin{scope}[xshift=-.1in]
\draw (4.0, -3.85) node[u03RbStyle] {} 
--++ (0,1) node[IStyle] {} node[u12RStyle,fill=black] {} 
--++ (0,1) node[u03RbStyle] {} 
--++ (0,1) node[u03RbStyle] {} 
--++ (0,1) node[IStyle] {} node[u12RStyle,fill=black] {} 
--++ (0,1) node[u03RbStyle] {};
\draw (3.5,-1.30) node[lStyle] {4};
\end{scope}

\begin{scope}[xshift=2.1in]
\draw 
(.5,.6) node[u03Style] {} -- (1,0) 
(.5,-.6) node[u03Style] {} -- (1,0) 
node[IStyle] {} node[u03Style,fill=black] {} -- (2,0) node[u03Style] {}; 
\draw (2,-.6) node[lStyle] {0};
\end{scope}

\begin{scope}[xshift=2.1in, yshift=-1.2in]
\draw 
(.5,.6) node[u03Style] {} -- (1,0) 
(.5,-.6) node[u03Style] {} -- (1,0) 
node[IStyle] {} node[u03Style,fill=black] {} -- (2,0) node[u12Style
] {}  -- (3,0) node[u03Style] {}; 
\draw (2,-.6) node[lStyle] {2};
\end{scope}

\begin{scope}[xshift=3.55in]
\draw (0,0) node[u03Style] {} -- (1,0) node[IStyle] {} node[u12Style,fill=black] {} -- (2,0) node[u03Style
] {} -- (3,0) node[IStyle] {} node[u12Style,fill=black] {} -- (4,0) node[u03Style] {} (2,0) --
(2,-1) node[uStyle,
label={[xshift=0cm, yshift=.15cm]left:\footnotesize{$U_0^3$}}] {};
\draw (3,-.8) node[lStyle] {4};
\end{scope}

\begin{scope}[xshift=3.55in, yshift=-1.2in]
\draw 
(.5,.6) node[u03Style] {} -- (1,0) 
(.5,-.6) node[u03Style] {} -- (1,0) 
node[IStyle] {} node[u03Style,fill=black] {} -- (2,0) node[u12Style
] {}  -- (3,0) node[u12Style] {}  -- (4,0) node[u03Style] {};

\draw (3,-.6) node[lStyle] {4};
\end{scope}
\end{scope}

\end{tikzpicture}
\caption{The 7 possible components of $G\setminus U_{\ell}^2$ in Case 6 when
$|T'|\in\{3,4,5\}$.\label{case6T345-fig}
}
\end{figure}

Suppose $|T'|=4$.
The only 4-vertex trees are $K_{1,3}$ and $P_4$.  
Recall that $n_2\le \frac{8-n_3}2=2$.  If $T'\cong P_4$, then $T$ must
contain a vertex in $U_{\ell+1}^2$ incident to each leaf.  There is a unique
such tree, a 6-vertex path with each neighbor of a leaf in $U_{\ell+1}^2$ (and
the four other vertices in $U_0^3$).  So assume $T'\cong K_{1,3}$.  
Now $n_2\in\{0,1,2\}$. This results in 1, 1, and 2
possibilities with orders 4, 5, and 6. 

Suppose $|T'|=5$.
The only 5-vertex subcubic trees are $P_5$ and $K_{1,3}$ with an edge subdivided.
Now $n_2\le 1$.  Thus, we cannot have $T'\cong P_5$, since then $T$ would have
a vertex in $U_0^3$ with both a needy 3-vertex and a neighbor in $U_{\ell}^2$,
which contradicts Lemma~\ref{noadj3swith32nbrs-lem}.
So $T'$ is formed from $K_{1,3}$ by subdividing a single edge.  Now we have a
single possibility for $T$, which is formed from $K_{1,3}$ by subdividing a
single edge twice.

Suppose $|T'|=6$.
Now $n_2\le 1$.  
There are 4 subcubic trees on 6 vertices.  However, two of them contain two
copies of a leaf adjacent to a vertex of degree 2 (in the tree).  Neither of
these are valid options for $T'$, by Lemma~\ref{noadj3swith32nbrs-lem}.  Thus, either
$T'$ is formed by subdividing a single edge of $K_{1,3}$ twice or else
$T'$ is a double-star (adjacent 3-vertices, with 4 leaves).  The first option
yields one case, and the second yields 3 cases (since we might not add a vertex
of $U_{\ell+1}^2$).

Suppose $|T'|=7$.
Now $n_2=0$; that is, $T'=T$.  Thus, each leaf of $T'$ must be adjacent to a
vertex of degree 3 in $T'$.  Since $T'$ has a 3-vertex, it has at least 3
leaves. Since each leaf has a neighbor of degree 3 in $T$, tree $T$ has at
least two 3-vertices.  There is a single possibility.

Suppose $|T'|=8$.
The analysis is nearly the same as when $|T'|=7$.  Now $T'$ must contain at
least 4 leaves and at least two 3-vertices.  Either $T'$ has 5 leaves and three
3-vertices or else $T'$ has 4 leaves, two 2-vertices, and two 3-vertices.
Each case gives a single possibility.
\end{proof}

\begin{figure}[!htb]
\centering
\begin{tikzpicture}[xscale=.8, semithick]
\tikzstyle{IStyle}=[shape = circle, minimum size = 6.5pt, draw=none, fill=white,
outer sep=0pt, inner sep=2.5pt]
 
\begin{scope}
\draw (0,0) node[u03LStyle] {} -- (1,0) node[IStyle] {} node[u03LStyle,fill=black] {} -- (2,0)
node[u03LStyle ] {} -- (3,0) node[IStyle] {} node[u03LStyle,fill=black] {} -- (4,0) node[u03LStyle] {}
(1,.75) node[u03Style] {} -- (1,0) (3,.75) node[u03Style] {} -- (3,0);
\draw (2,-1.2) node[lStyle] {3};
\end{scope}

\begin{scope}[xshift=2.1in]
\draw (0,0) node[u03LStyle] {} -- (1,0) node[IStyle] {} node[u03LStyle,fill=black] {} -- (2,0)
node[u03LStyle ] {} -- (3,0) node[IStyle] {} node[u03LStyle,fill=black] {} -- (4,0) node[u03LStyle] {}
(1,.75) node[u03Style] {} -- (1,0) (2,.75) node[u03Style] {} -- (2,0) (3,.75) node[u03Style] {} -- (3,0);
\draw (2,-1.2) node[lStyle] {4};
\end{scope}

\begin{scope}[xshift=4.2in]
\draw (0,0) node[u03LStyle] {} -- (1,0) node[IStyle] {} node[u03LStyle,fill=black] {} -- (2,0)
node[u03LStyle ] {} -- (3,0) node[u03LStyle] {} -- (4,0) node[IStyle] {} node[u03LStyle,fill=black] {}
-- (5,0) node[u03LStyle] {}
(1,.75) node[u03Style] {} -- (1,0) (4,.75) node[u03Style] {} -- (4,0);
\draw (2.5,-1.2) node[lStyle] {4};
\end{scope}

\draw[] (-.5,2.0) -- (16.25,2.0) (4.7,2) -- (4.7,-1.5);

\begin{scope}[xscale=.85]
\begin{scope}[xshift = -.2in, yshift=1.3in]
\draw 
(.5,.6) node[u03Style] {} -- (1,0) 
(.5,-.6) node[u03lStyle] {} -- (1,0) 
node[IStyle] {} node[u03rStyle,fill=black] {} -- (2,0) 
node[u03Style ] {} -- (3,0) 
node[u03Style ] {} -- (4,0) 
node[IStyle] {} node[u12Style,fill=black] {} -- (5,0) node[u03Style] {};
\draw (2,-.6) node[lStyle] {4};
\end{scope}

\begin{scope}[xshift=2.4in, yshift=1.3in]
\draw 
(.5,.6) node[u03Style] {} -- (1,0) 
(.5,-.6) node[u03lStyle] {} -- (1,0) 
node[IStyle] {} node[u03rStyle,fill=black] {} -- (2,0) node[u03lStyle
] {} -- (2.5,.6) node[u03Style] {} (2,0) -- (2.5,-.6) node[u03rStyle] {};
\draw (1.5,-.6) node[lStyle] {2};
\end{scope}

\begin{scope}[xshift=3.9in, yshift=1.3in]
\draw (.5,.6) node[u03Style] {} -- (1,0) 
(.5,-.6) node[u03lStyle] {} -- (1,0) 
node[IStyle] {} node[u03rStyle,fill=black] {} -- (2,0) node[u12Style] {} 
-- (3,0) node[IStyle] {} node[u03lStyle, fill=black] {}
-- (3.5,.6) node[u03Style] {} (3,0) -- (3.5,-.6) node[u03rStyle] {};
\draw (2,-.6) node[lStyle] {4};
\end{scope}

\begin{scope}[xshift=5.85in, yshift=1.3in]
\draw 
(.5,.6) node[u03Style] {} -- (1,0) 
(.5,-.6) node[u03lStyle] {} -- (1,0) 
node[IStyle] {} node[u03rStyle,fill=black] {} -- (2,0) node[u03lStyle
] {} -- (2.5,.6) node[u03Style] {} (2,0) -- (3,-.3) node[IStyle] {} node[u12Style, fill=black] {}
-- (4,-.6) node[u03Style] {};
\draw (1.5,-.6) node[lStyle] {4};
\end{scope}
\end{scope} 

\end{tikzpicture}
\caption{The 7 possible components of $G\setminus U_{\ell}^2$ in Case 6 when $|T'|\in
\{6,7,8\}$.\label{case6T678-fig}}
\end{figure}
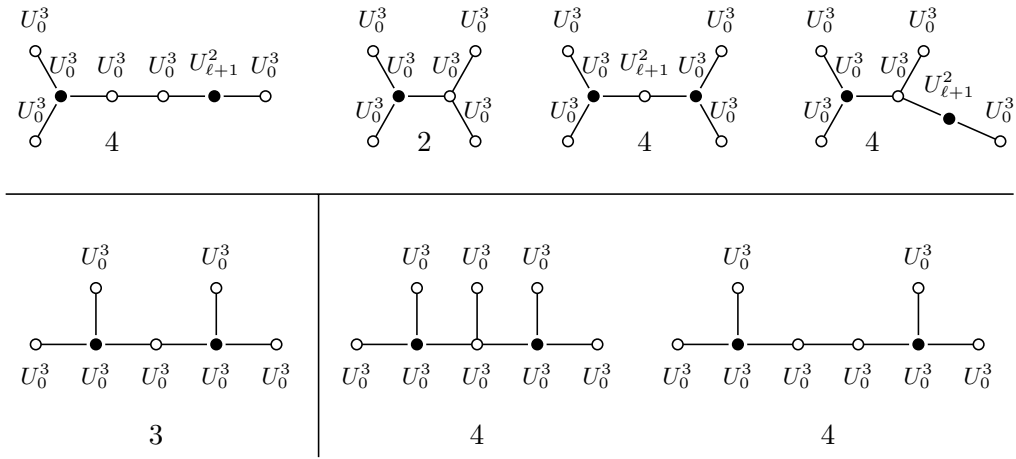

\begin{lem}
$G$ has an $(I,F_k)$-coloring, and is thus not a counterexample.
\end{lem}
\begin{proof}
We now construct an $(I,F_k)$-coloring of $G$.
As we described above, our plan is to color all vertices of $U_{\ell}^2$ with $I$
(since they form an independent set, by Lemma~\ref{no2-threads-k-odd}).
For each possible acyclic component $J$ of $G\setminus U_{\ell}^2$, shown in
Figures~\ref{cases1234-fig}, \ref{case6T2-fig}, \ref{case6T345-fig}, and
\ref{case6T678-fig}, we show how to extend this coloring to $J$.  Those vertices drawn
as white are colored with $F$ and those drawn as black are colored with $I$.
Doing this preserves that $I$ is an independent set and $G[F]$ is a forest with
at most $k$ vertices in each component.
The only complication is the four possible components $J$ that contain a cycle, shown
in Figure~\ref{case5-fig}.  In fact, the second and fourth of these are fine.  Suppose
instead that $J\in\{C_3,C_4\}$ with all vertices in $U_0^3$. Now we color one
vertex $v$ of $J$ with $I$ (and the rest with $F$).  To preserve that $I$ is an
independent set, we recolor the neighbor $w$ of $v$ in $U_{\ell}^2$ with $F$.  We
must ensure that $w$ does not become part of a tree on $k+1$ vertices.  Since
$\ch^*(J)\ge 3$, every other component $J'$ of $G\setminus U_{\ell}^2$ has
$\ch^*(J')\leq 1$; in particular, this is true of the component containing the
neighbor of $w$ other than $v$.  So $J'$ is either $K_{1,3}$ (with all vertices
in $U_0^3$) or else $P_3$ (with its center vertex in $U_{\ell+1}^2$ and leaves in
$U_0^3$).  In each case for $J'$, the subgraph induced by its vertices colored
$F$ is an independent set.  Thus, recoloring $w$ with $F$ creates a tree
colored $F$ with at most 2 vertices.
\end{proof}

\section*{Acknowledgments}
Thank you to two anonymous referees who both provided helpful feedback.  In
particular, one referee read the paper extremely carefully and caught numerous
typos, inconsistencies, and errors, and also suggested various improvements in the
presentation.

\bibliographystyle{siam}
{\footnotesize{\bibliography{refs}}
\end{document}